\newtheorem{theorem}{Theorem}[section]
\newtheorem{lemma}[theorem]{Lemma}
\newtheorem{remark}[theorem]{Remark}
\numberwithin{equation}{section}
\begin{document}

\title{Robust recovery-type {\em a posteriori} error estimators for streamline upwind/Petrov Galerkin discretizations for singularly perturbed problems}

\author{
    Shaohong Du\thanks{
        School of Mathematics and Statistics, Chongqing Jiaotong University, Chongqing 400074, China.
        E-mail: {dushaohong@csrc.ac.cn}, {duzheyan.student@sina.com.cn}.
        Research partially supported by the National Natural Science Foundation of China under grants 91430216, 11471031.
    }
    \and
    Runchang Lin\thanks{
        Department of Mathematics and Physics, Texas A\&M International University, Laredo, Texas 78041, USA.
        E-mail: {rlin@tamiu.edu}.
        Research partially supported by the US National Science Foundation grant 1217268 and by a University Research Grant of Texas A\&M International University.
    }
    \and
    Zhimin Zhang\thanks{
        Beijing Computational Science Research Center, Beijing 100193, China and Department of Mathematics, Wayne State University, Detroit, Michigan 48202, USA. E-mail: {zmzhang@csrc.ac.cn}, {zzhang@math.wayne.edu}.
        Research partially supported by the US National Science Foundation through grant DMS-1419040 and by the National Natural Science Foundation of China under grants 91430216, 11471031.
    }
}
\date{}

\maketitle

\begin{abstract}
    In this paper, we investigate adaptive streamline upwind/Petrov Galerkin (SUPG) methods for singularly perturbed convection-diffusion-reaction equations in a new dual norm presented in \cite{Du}. The flux is recovered by either local averaging in conforming $H({\rm div})$ spaces or weighted global $L^{2}$ projection onto conforming $H({\rm div})$ spaces. We further introduce a recovery stabilization procedure, and develop completely robust {\em a posteriori} error estimators with respect to the singular perturbation parameter $\varepsilon$. Numerical experiments are reported to support the theoretical results and to show that the estimated errors depend on the degrees of freedom uniformly in $\varepsilon$.
\end{abstract}

Keywords: singular perturbation, streamline upwind/Petrov Galerkin method, recovery-type {\em a posteriori} error estimator, robust.

{\em 2010 Mathematics Subject Classification.} 65N15, 65N30, 65J15

%%%%%%%%%%%%%%%%%%%%%%%%%%%%%%%%%%%%%%%%%%%%%%%%%%%%%%%%%%%
%%%%%%%%%%%%%%%%%%%%%%%%%%%%%%%%%%%%%%%%%%%%%%%%%%%%%%%%%%%
\section{Introduction}
Let $\Omega$ be a bounded polygonal or polyhedral domain in ${\mathbb{R}}^{d}$ ($d=2$ or $3$) with
Lipschitz boundary $\Gamma=\Gamma_{D}\cup\Gamma_{N}$, where $\Gamma_{D} \cap \Gamma_{N} = \emptyset$. Consider
the following stationary singularly perturbed convection-diffusion-reaction problem
\begin{equation}\label{PDE1}
    \left \{
    \begin{array}{rl}
        \mathcal{L}u := -\varepsilon\triangle u+{\bf a}\cdot\nabla u+bu=f & \mbox{in}~\Omega,\\
        u=0 & \mbox{on}~\Gamma_{D},\\
        \displaystyle \varepsilon \frac{\partial u}{\partial{\bf n}} = g & \mbox{on}~\Gamma_{N},
    \end{array}\right.
\end{equation}
where $0<\varepsilon\ll 1$ is the singular perturbation
parameter, ${\bf a}\in (W^{1,\infty}(\Omega))^d$, $b\in L^{\infty}(\Omega)$, $f\in L^2(\Omega)$, ${\bf n}$ is the outward
unit normal vector to $\Gamma$, and equation \eqref{PDE1} is scaled such
that $||{\bf a}||_{L^\infty} = \mathcal{O}(1)$ and $||b||_{L^\infty} = \mathcal{O}(1)$. The Dirichlet
boundary $\Gamma_{D}$ has a positive $(d-1)$-dimensional Lebesgue measure, which includes the inflow
boundary $\{{\bf x}\in \partial \Omega : {\bf a}({\bf x}) \cdot {\bf n}<0\}$. Assume that there are
two nonnegative constants $\beta$ and $c_{b}$, independent of $\varepsilon$, satisfying
\begin{equation}\label{Assumption}
    b-\frac{1}{2}\nabla\cdot{\bf a} \geq \beta \quad \text{and} \quad ||b||_{L^{\infty}{(\Omega)}}\leq c_{b}\beta.
\end{equation}
Note that if $\beta=0$, then $b\equiv0$ and there is no reaction term in \eqref{PDE1}.

Adaptive finite element methods (FEMs) for numerical solutions of partial differential equations (PDEs) are very
popular in scientific and engineering computations. {\em A posteriori} error estimation is an essential
ingredient of adaptivity. Error estimators in literature can be categorized into three classes: residual
based, gradient recovery based, and hierarchical bases based. Each approach has certain advantages.

Designing a robust {\em a posteriori} error estimator for singularly
perturbed equations is challenging, because the estimators usually
depend on the small diffusion parameter $\varepsilon$. This problem
was first investigated by Verf\"{u}rth \cite{Verfurth2}, in which
both upper and lower bounds for error estimator in an
$\varepsilon$-weighted energy norm was proposed. It was  shown that
the estimator was robust when the local P\'{e}clet number is not
very large. Generalization of this approach can be found in, e.g.,
\cite{Berrone02,Kunert,Rapin,San01}. He considered also robust
estimators in an ad hoc norm in \cite{Verfurth1}. In \cite{San08},
Sangalli pointed out that the ad hoc norm may not be appropriate for
problem (\ref{PDE1}), and proposed a residual-type {\em a
posteriori} estimator for 1D convection-diffusion problem which is
robust up to a logarithmic factor with respect to global P\'{e}clet
number. Recently, John and Novo \cite{John-Novo} proposed a robust
{\em a posteriori} error estimator in the natural SUPG norm (used in
the {\em a priori} analysis) under some hypotheses, which, however,
may not be fulfilled in practise. In \cite{Ani}, a fully computable,
guaranteed upper bounds are developed for the discretisation error
in energy norm. Very recently, Tobiska and Verf\"{u}rth
\cite{Tobiska} presented robust residual {\em a priori} error
estimates for a wide range of stabilized FEMs.

For {\em a posteriori} error estimation of singularly perturbed
problems, it is crucial to employ an appropriate norm, since the
efficiency of a robust estimator depends fully on the norm. Du and Zhang \cite{Du} proposed a
dual norm, which is induced by an $\varepsilon$-weighted energy norm
and a related $H^{1/2}(\Omega)$-norm. A uniformly robust {\em a
posteriori} estimator for the numerical error was obtained from the
new norm. Both theoretical and numerical results showed that
the estimator performs better than the existing ones in the
literature.

It is well known that the {\em a posteriori} error estimators of the recovery type possess many appealing properties,
including simplicity, university, and asymptotical exactness, which lead to their widespread adoption, especially
in the engineering community (cf., e.g., \cite{Ainsworth,Ainsworth0,Bank2,Carstensen,Zhang1,Zhang,Zienkiewicz,Zienkiewicz2}). However,
when applied to many problems of practical interest, such as interface singularities, discontinuities in the form of
shock-like fronts, and of interior or boundary layers, they lose not only asymptotical exactness but also efficiency
on relatively coarse meshes. They may overrefine regions where there are no error, and hence fail to reduce the
global error (see \cite{Bank,Ovall1,Ovall2}). To overcome this difficulty, Cai and Zhang \cite{cai1} developed a
global recovery approach for the interface problem. The flux is recovered in $H({\rm div})$ conforming finite
element (FE) spaces, such as the Raviart-Thomas (RT) or the Brezzi-Douglas-Marini (BDM) spaces, by global weighted $L^{2}$-projection or
local averaging. The resulting recovery-based (implicit and explicit) estimators are measured in the standard energy
norm, which turned out to be robust if the diffusion coefficient is monotonically distributed.

This approach was further extended for solving general second-order elliptic PDEs \cite{cai2}. The implicit
estimators based on the $L^{2}$-projection and $H({\rm div})$ recovery procedures were proposed to be the sum
of the error in the standard energy norm and the error of the recovered flux in a weighted $H({\rm div})$ norm.
The global reliability and the local efficiency bounds for these estimators were established. For singularly
perturbed problems, the estimators developed in \cite{cai1,cai2} are not robust with respect to $\varepsilon$. To the authors' knowledge, no robust recovery-type estimators have been proposed for such problems in the literature.

Motivated by aforementioned works, we extend the approach in \cite{Du} and develop robust recovery-based {\em a posteriori} error estimators for the SUPG method for singularly perturbed problems. Three procedures will be applied, which are the explicit recovery through local averaging in $RT_0$ spaces, the implicit recovery based on the global weighted $L^{2}$-projection in $RT_0$ and $BDM_1$ spaces, and the implicit $H({\rm div})$ recovery procedure. Numerical errors will be measured in a dual norm presented in \cite{Du}. Note that these estimators are different from those in \cite{Du}, since the jump in the normal component of the flux consists of a recovery indicator in addition to an incidental term (see Remark 4.1). Our recovery procedures are also different from those in \cite{cai1,cai2} (e.g., the flux recovery based on the local averaging provides an appropriate choice of weight factor, the $H({\rm div})$ recovery procedure develops a stabilization technique, the recovery procedures treat Neumann boundary conditions properly, etc.). Moreover, the estimators developed here are uniformly robust with respect to $\varepsilon$ and $\beta$.

The rest of this paper is organized as follows. In Section \ref{Sec_VarForm}, we introduce the variational
formulation and some preliminary results. In Section \ref{FluxRec}, we define an implicit flux recovery
procedure based on the $L^{2}$-projection onto the lowest-order $RT$ or $BDM$ spaces, and an explicit
recovery procedure through local averaging in the lowest-order $RT$ spaces. In Section \ref{APostErr},
for implicit and explicit recovery procedures, we give a reliable upper bound for the numerical error
in a dual norm developed in \cite{Du}. Section \ref{Sec_analysis} is devoted to the analysis of efficiency
of the estimators. Here, the efficiency is in the sense that the converse estimate of upper bound holds
up to different higher order terms (usually oscillations of data) and a different multiplicative constant
depends only on the shape of the mesh. We show that the estimators are completely robust with respect
to $\varepsilon$ and $\beta$. In Section \ref{HdivRec}, we define a stabilization $H({\rm div})$ recover
procedure, and develop robust recovery-based estimator by using the main results of Sections~\ref{APostErr}
and \ref{Sec_analysis}. Numerical tests are provided in Section \ref{NumExp} to support the theoretical results.

%%%%%%%%%%%%%%%%%%%%%%%%%%%%%%%%%%%%%%%%%%%%%%%%%%%%%%%%%%%
%%%%%%%%%%%%%%%%%%%%%%%%%%%%%%%%%%%%%%%%%%%%%%%%%%%%%%%%%%%
\section{Variational Formulation and Preliminary Results}\label{Sec_VarForm}
For any subdomain $\omega$ of $\Omega$ with a Lipschitz boundary $\gamma$, denote by $<\cdot, \cdot>_e$
and $(\cdot,\cdot)_{\omega}$ the inner products on $e\subseteq\gamma$ and $\omega$, respectively. Throughout
this paper, standard notations for Lebesgue and Sobolev spaces and their norms and seminorms are
used \cite{ADAMS}. In particular, for $1\leq p <\infty$ and $0<s<1$, the norm of the fractional
Sobolev space $W^{s,p}(\omega)$ is defined as
\begin{equation*}
    ||v||_{W^{s,p}(\omega)} := \Big\{ ||v||_{L^{p}(\omega)}^{p}+
    \int_{\omega}\int_{\omega}\frac{|v(x)-v(y)|^{p}}{|x-y|^{d+ps}}dxdy \Big\}^{1/p}\ \ {\rm for}\ \ v\in W^{s,p}(\omega).
\end{equation*}
When $p=2$, we write $H^{s}(\omega)$ for $W^{s,2}(\omega)$. We will also use the space
$%\begin{equation*}
    H({\rm div};\omega): =\{\tau\in L^{2}(\omega)^{d}: \nabla\cdot \tau\in L^{2}(\omega)\}.
$%\end{equation*}
To simplify notations, we write $||\cdot||_{s,\omega} = ||\cdot||_{H^s(\omega)}$, $|\cdot|_{s,\omega} = |\cdot|_{H^s(\omega)}$,
and $||\cdot||_{\gamma} = ||\cdot||_{L^{2}(\gamma)}$. Moreover, when no confusion may arise, we will omit the
subindex $\Omega$ in the norm and inner product notations if $\omega=\Omega$. Let $H_{D}^{1}(\Omega) :=\{v\in H^{1}(\Omega) : v|_{\Gamma_{D}}=0 \}$. Define a bilinear form $B(\cdot,\cdot)$ on $H_{D}^{1}(\Omega)\times H_{D}^{1}(\Omega)$ by
\begin{equation}\label{PDE2}
    B(u,v)=\varepsilon(\nabla u,\nabla v)+({\bf a}\cdot\nabla u,v)+(bu,v).
\end{equation}
The variational formulation of (\ref{PDE1}) is to find $u\in H_{D}^{1}(\Omega)$ such that
\begin{equation}\label{PDE3}
    B(u,v)=(f,v) + <g,v>_{\Gamma_{N}} \quad \forall v\in H_{D}^{1}(\Omega).
\end{equation}
Under the assumption \eqref{Assumption}, equation (\ref{PDE3}) possesses a unique weak solution (cf., e.g., \cite{Roos}).

Let $\mathcal{T}_{h}$ be a shape regular admissible triangulation of $\Omega$ into triangles or tetrahedra satisfying the angle condition \cite{Ciarlet}. We use $F\preceq G$ to represent $F \leq CG$ , and write $F\thickapprox G$ if both $F\preceq G$ and $G\preceq F$ hold true. Here and in what follows, we use $C$ for a generic positive constant depending only on element shape regularity and $d$. Assume that $\mathcal{T}_{h}$ aligns with the partition of $\Gamma_{D}$ and $\Gamma_{N}$. Let $\mathcal{E}$ be the set of all {\em edges} (for $d=2$) or {\em faces} (for $d=3$) of elements in $\mathcal{T}_{h}$. Then $\mathcal{E}=\mathcal{E}_{\Omega}\cup\mathcal{E}_{D}\cup\mathcal{E}_{N}$, where $\mathcal{E}_{\Omega}$ is the set of interior edges/faces, and $\mathcal{E}_{D}$ and $\mathcal{E}_{N}$ are the sets of boundary edges/faces on $\Gamma_{D}$ and $\Gamma_{N}$, respectively. Let $P_{k}(K)$ be the space of polynomials on $K$ of total degree at most $k$. Let the FE space $V_{h}$ be
\begin{equation*}
    V_{h} :=\{v_{h}\in C(\overline{\Omega}) : v_{h}|_{K}\in P_{1}(K) \quad \forall K\in\mathcal{T}_{h}, \, v_{h}|_{\Gamma_{D}}=0\}.
\end{equation*}

Define a bilinear form $B_{\delta}(\cdot,\cdot)$ on $V_{h}\times V_{h}$ and a linear functional $l_{\delta}(\cdot)$ on $V_{h}$ by
\begin{align*}
    & B_{\delta}(u_{h},v_{h}) = B(u_{h},v_{h}) + \sum_{K\in\mathcal{T}_{h}}\delta_{K} (-\varepsilon\triangle u_{h}
    +{\bf a}\cdot\nabla u_{h}+bu_{h}, {\bf a}\cdot\nabla v_{h})_{K}, \\
    & l_{\delta}(v_{h})=(f,v_{h}) + <g,v_{h}>_{\Gamma_{N}} + \sum_{K\in\mathcal{T}_{h}} \delta_{K}(f,{\bf a}\cdot\nabla v_{h})_{K},
\end{align*}
where $\delta_{K}$'s are nonnegative stabilization parameters satisfying
\begin{equation}\label{PDE5}
    \delta_{K}||{\bf a}||_{L^{\infty}(K)}\leq Ch_{K} \quad \forall K\in\mathcal{T}_{h}.
\end{equation}
Note that $\triangle u_{h}$ is interpreted as the Laplacian applied to $u_{h}|_{K}, \forall K\in\mathcal{T}_{h}$.
For the lowest-order element, though $\triangle u_{h}$ vanishes on each element, we will keep this term for
complete presentation of the SUPG method and its analysis in below (cf. Section~\ref{Sec_analysis}).

Then the FE approximation of (\ref{PDE1}) is to find $u_{h}\in V_{h}$ such that
\begin{equation}\label{PDE6}
    B_{\delta}(u_{h},v_{h})=l_{\delta}(v_{h}) \quad \forall v_{h}\in V_{h}.
\end{equation}
Note that the choice $\delta_{K}=0$ for all $K\in\mathcal{T}_{h}$ yields the standard Galerkin method, and
the choice $\delta_{K}>0$ for all $K$ corresponds to the SUPG-discretization. The existence and uniqueness of
solution to (\ref{PDE6}) are guaranteed by \eqref{Assumption} and \eqref{PDE5} (cf., e.g., \cite{Franca, Hughes,Verfurth1}).

Define an $\varepsilon$-weighted energy norm by
\begin{equation*}%\label{aaa1}
||v||_{\varepsilon} :=(\varepsilon|v|_{1}^{2}+\beta||v||^{2})^{1/2} \quad \forall v\in H^{1}(\Omega).
\end{equation*}
Let $h({\bf x})$ be a function satisfying $0<h_{\rm min}\leq h({\bf x})\leq h_{\rm max}<\infty$ almost
everywhere in $\Omega$. Define a norm of $v\in H_{D}^{1}(\Omega)$ with respect to $h({\bf x})$ by
\begin{align*}
        & |||v|||^2:=||v||_{\varepsilon}^{2}+\max \big\{ ||v||_{H^{1/2}(\Omega)}^{2},\; ||h({\bf x})^{-1/2}v||^{2} + ||h({\bf x})^{1/2}\nabla v||^{2} \big\} \\
        \text{or} \hspace{0.5in}
        & |||v|||^{2} :=||v||_{\varepsilon}^{2}+||h({\bf x})^{-1/2}v||^{2}+||h({\bf x})^{1/2}\nabla v||^{2}.
\end{align*}
It is shown \cite{Du} that the dual norm
\begin{equation}\label{dual norm}
    |||\cdot|||_{*} :=\sup_{v\in H_{D}^{1}(\Omega)\setminus\{0\}}\frac{B(\cdot,v)}{|||v|||}
\end{equation}
induced by the bilinear form (\ref{PDE2}) satisfies, for $u\in H_{D}^{1}(\Omega)$,
\begin{equation*}%\label{section1+}
    |||u|||_{*}=\sup_{v\in H_{D}^{1}(\Omega)\setminus\{0\}}\frac{<\mathcal{L}u,v>}{|||v|||}
\geq\frac{|||u|||}{||\mathcal{L}^{-1}||_{(H_{D}^{1}(\Omega))^{*}\rightarrow
H_{D}^{1}(\Omega)}}.
\end{equation*}
This inequality shows that $|||u|||_{*}$ may reflect the first derivatives of $u$ even if $\varepsilon=0$.

Let $I_{h} :L^{2}(\Omega)\rightarrow V_{h}$ be the Cl\'{e}ment interpolation operator (cf. \cite{Clement,Verfurth1,San08}
and \cite[Exercise 3.2.3]{Ciarlet}). The following estimates on $I_{h}$ are found in \cite{Du}.
%%%%%%%%%%%%%%%%%%%%
% Lemma
%%%%%%%%%%%%%%%%%%%%
\begin{lemma}\label{convection 1}
    Let $h_{e}$ be the diameter of an edge/face $e$. For any $v\in H_{D}^{1}(\Omega)$,
    \begin{align}
        \label{convection 2}
        & \sum_{K\in\mathcal{T}_{h}}\delta_{K}^{2}\max\{\beta,\varepsilon h_{K}^{-2},h_{K}^{-1}\} ||{\bf a}\cdot\nabla(I_{h}v)||_{K}^{2} \preceq |||v|||^{2}, \\
        \label{convection 3}
        & \sum_{K\in\mathcal{T}_{h}}\max\{\beta,\varepsilon h_{K}^{-2},h_{K}^{-1}\}||v-I_{h}v||_{K}^{2}\preceq|||v|||^{2}, \\
        \label{convection 4}
        & \sum_{e\subset\Gamma_{N}}\max\{\varepsilon^{1/2}\beta^{1/2},\varepsilon h_{e}^{-1},1\}||v-I_{h}v||_{e}^{2}\preceq|||v|||^{2}.
    \end{align}
\end{lemma}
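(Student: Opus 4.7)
The plan is to combine the standard approximation and stability properties of the Cl\'ement interpolant --- namely $||v-I_h v||_K \preceq h_K||\nabla v||_{\widetilde K}$, $||v-I_h v||_K\preceq ||v||_{\widetilde K}$, and $||v-I_h v||_e\preceq h_e^{1/2}||\nabla v||_{\widetilde e}$ --- with the $H^1$- and $L^2$-stability of $I_h$, an inverse inequality on discrete functions, Young's inequality, and the shape-regularity relation $h_e\approx h_K$. Since the $H^{1/2}$-variant of $|||\cdot|||$ dominates the other one, it suffices to prove the three estimates for the smaller norm $|||v|||^2=||v||_\varepsilon^2+||h^{-1/2}v||^2+||h^{1/2}\nabla v||^2$. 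In each of \eqref{convection 2}--\eqref{convection 4} the first step is to dominate the pointwise maximum on each element or edge by the sum of its three arguments, and then control the three resulting local pieces separately.

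For \eqref{convection 2}, I would first use $\delta_K||{\bf a}||_{L^\infty(K)}\preceq h_K$ together with $||{\bf a}||_{L^\infty}=\mathcal{O}(1)$ to obtain $\delta_K^2||{\bf a}\cdot\nabla I_h v||_K^2\preceq h_K^2||\nabla I_h v||_K^2$. The $\varepsilon h_K^{-2}$ and $h_K^{-1}$ contributions then follow from $H^1$-stability of $I_h$, summing respectively to $\varepsilon|v|_1^2$ and $||h^{1/2}\nabla v||^2$. The $\beta$ contribution becomes $\beta h_K^2||\nabla I_h v||_K^2$, to which I would apply the inverse inequality $||\nabla I_h v||_K\preceq h_K^{-1}||I_h v||_K$ followed by $L^2$-stability of $I_h$ in order to recover $\beta||v||^2$.

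For \eqref{convection 3}, the $\varepsilon h_K^{-2}$ contribution is absorbed by the $H^1$ Cl\'ement bound (producing $\varepsilon|v|_1^2$) and the $\beta$ contribution by the $L^2$ Cl\'ement bound (producing $\beta||v||^2$). The remaining piece $h_K^{-1}||v-I_h v||_K^2$ is the subtle one: I would write it as the product of the two Cl\'ement bounds, $h_K^{-1}\cdot h_K||\nabla v||_{\widetilde K}\cdot||v||_{\widetilde K}=||\nabla v||_{\widetilde K}||v||_{\widetilde K}$, and then apply Young's inequality with weight $h_K$ to split it into $\tfrac12 h_K||\nabla v||_{\widetilde K}^2+\tfrac12 h_K^{-1}||v||_{\widetilde K}^2$, whose sums are exactly the two weighted pieces of $|||v|||^2$.

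For \eqref{convection 4}, the shape-regular trace inequality combined with Cl\'ement approximation yields $||v-I_h v||_e^2\preceq h_e||\nabla v||_{\widetilde K}^2$; the $\varepsilon h_e^{-1}$ and the $1$ contributions immediately give $\varepsilon||\nabla v||_{\widetilde K}^2$ and $h_e||\nabla v||_{\widetilde K}^2$, which sum respectively to $\varepsilon|v|_1^2$ and $||h^{1/2}\nabla v||^2$. For the $\varepsilon^{1/2}\beta^{1/2}$ contribution I would use the Young estimate $\varepsilon^{1/2}\beta^{1/2}\leq\tfrac12(\varepsilon h_e^{-1}+\beta h_e)$, which reduces it to the $\varepsilon h_e^{-1}$ case plus a term $\beta h_e^2||\nabla v||_{\widetilde K}^2$; since $\beta$ and $h_e$ are uniformly bounded under \eqref{Assumption} and the standing hypotheses on ${\bf a}$ and $b$, this remainder is absorbed into $||h^{1/2}\nabla v||^2$. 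The main obstacles are the middle piece of \eqref{convection 3} and the $\varepsilon^{1/2}\beta^{1/2}$ piece of \eqref{convection 4}: neither matches a single Cl\'ement bound, and both require interpolating between the two approximation bounds through a Young inequality with parameter $h_K$ (or $h_e$) so that the $||h^{1/2}\nabla v||^2$ and $||h^{-1/2}v||^2$ contributions of the triple norm are simultaneously activated.
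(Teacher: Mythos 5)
The paper itself offers no proof of this lemma---it is imported verbatim from \cite{Du}---so there is nothing in the text to compare against line by line. Your argument is the standard one and is essentially correct: the reduction to the second (smaller) variant of $|||\cdot|||$ is legitimate, since the $\max$ in the first variant only enlarges the norm; the treatment of \eqref{convection 2} via \eqref{PDE5}, local $H^{1}$- and $L^{2}$-stability of $I_{h}$, and an inverse inequality for the $\beta$-piece is right; and your key observation for the $h_{K}^{-1}$ piece of \eqref{convection 3}---multiplying the two Cl\'ement bounds $\|v-I_hv\|_K\preceq h_K\|\nabla v\|_{\tilde\omega_K}$ and $\|v-I_hv\|_K\preceq\|v\|_{\tilde\omega_K}$ and then splitting by Young's inequality with weight $h_K$ so that both $\|h^{1/2}\nabla v\|^{2}$ and $\|h^{-1/2}v\|^{2}$ are activated---is precisely the mechanism for which the norm $|||\cdot|||$ was designed.

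The one step you should tighten is the $\varepsilon^{1/2}\beta^{1/2}$ piece of \eqref{convection 4}. After $\varepsilon^{1/2}\beta^{1/2}\leq\tfrac12(\varepsilon h_{e}^{-1}+\beta h_{e})$ you are left with $\beta h_{e}\|v-I_{h}v\|_{e}^{2}\preceq\beta h_{e}^{2}\|\nabla v\|_{\tilde\omega_{e}}^{2}$, and absorbing this into $\|h^{1/2}\nabla v\|^{2}$ costs a factor $\beta h_{e}\leq\beta\,\mathrm{diam}(\Omega)$. That constant is not of the admissible type (the paper's convention is that hidden constants depend only on shape regularity and $d$, and the entire point is uniformity in $\beta$); it is harmless only because of the scaling $\|b\|_{L^{\infty}}=\mathcal{O}(1)$, which you invoke somewhat loosely---note that \eqref{Assumption} gives the \emph{lower} bound $\beta\geq\|b\|_{L^\infty}/c_{b}$, while the upper bound on $\beta$ comes from $\beta\leq b-\tfrac12\nabla\cdot{\bf a}$ combined with the scaling of the data. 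The clean fix is to use the multiplicative trace inequality $\|w\|_{e}^{2}\preceq\|w\|_{K}\,(h_{K}^{-1}\|w\|_{K}+\|\nabla w\|_{K})$ with $w=v-I_{h}v$, which yields $\|v-I_{h}v\|_{e}^{2}\preceq\|v\|_{\tilde\omega_{e}}\|\nabla v\|_{\tilde\omega_{e}}$; then $\varepsilon^{1/2}\beta^{1/2}\|v-I_{h}v\|_{e}^{2}\leq\tfrac12(\varepsilon\|\nabla v\|_{\tilde\omega_{e}}^{2}+\beta\|v\|_{\tilde\omega_{e}}^{2})$ sums directly into $\|v\|_{\varepsilon}^{2}$ with a purely shape-dependent constant. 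With that replacement your proof is complete and matches what the cited reference does.
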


\begin{remark}[On the norm $|||\cdot|||_{*}$]\label{Rk:Norms}
We first review a robust residual-based {\em a posteriori} estimator, which is proposed in SUPG norm under some
hypotheses \cite{John-Novo}. Let
\begin{equation*}
\begin{array}{c}
    \eta_{1}=\displaystyle\Big(\sum\limits_{K\in\mathcal{T}_{h}}\min\big\{\frac{C}{\beta},C\frac{h_{K}^{2}}{\varepsilon},
    24\delta_{K}\big\}||R_{K}||_{K}^{2}\Big)^{1/2},\quad
    \eta_{2}=\displaystyle\Big(\sum\limits_{K\in\mathcal{T}_{h}}24\delta_{K}||R_{K}||_{K}^{2}\Big)^{1/2},\\
    \text{and~~}
    \eta_{3}=\displaystyle\Big(\sum\limits_{e\in\mathcal{E}}\min\big\{\frac{24}{||{\bf a}||_{\infty,e}},
    C\frac{h_{e}}{\varepsilon}, \frac{C}{\varepsilon^{1/2}\beta^{1/2}} \big\}||R_{e}||_{e}^{2}\Big)^{1/2},
\end{array}
\end{equation*}
where the cell residual $R_K$ and edge/face residual $R_e$ are defined by \eqref{Eq:CellResidl} and
\begin{equation*}
 R_{e} :=\left \{ \begin{array}{ll}
  -[\varepsilon\nabla u_{h}\cdot{\bf n}_{e}]|_{e}\ \ \  & \mbox{if}\ \ e\nsubseteq\Gamma,\\
 g-\varepsilon\nabla u_{h}\cdot{\bf n}_{e}\ \ \ & \mbox{if}\ \ e\subset\Gamma_{N},\\
 0\ \ \ & \mbox{if}\ \ e\subset\Gamma_{D},
\end{array}\right.
\end{equation*}
respectively. A global upper bound is then given by \cite[Theorem 1]{John-Novo}
\begin{align}
    \nonumber
    ||u-u_{h}||_{\rm {SUPG}}^{2} \leq
    & \eta_{1}^{2}+\eta_{2}^{2}+\eta_{3}^{2}+\sum\limits_{K\in\mathcal{T}_{h}}16\delta_{K} h_{K}^{-2} \varepsilon^{2}C^{2} ||\nabla(u-\tilde{I}_{h}u_{h})||_{K}^{2}\\
    \label{Eq:GUBJN}
    & +\sum\limits_{K\in\mathcal{T}_{h}}8\delta_{K}\varepsilon^{2}||\triangle(u-\tilde{I}_{h}u_{h})||_{K}^{2},
\end{align}
where $||u-u_{h}||_{{\rm SUPG}}^{2}= ||u-u_{h}||_{\varepsilon}^{2}+\sum_{K\in\mathcal{T}_{h}}\delta_{K} ||{\bf a}\cdot\nabla(u-u_{h})||_{K}^{2}$ and $\tilde{I}_{h}$ is an interpolation operator satisfying the hypothesis in \cite{John-Novo}. In the convection-dominated regime, the last two terms in \eqref{Eq:GUBJN} are negligible compared with the other terms. The upper bound is reduced to
\begin{equation*}
||u-u_{h}||_{\rm {SUPG}}^{2} \preceq \eta_{1}^{2}+\eta_{2}^{2}+\eta_{3}^{2}.
\end{equation*}
Compared with the estimator in \cite{Du}, one concludes that
\begin{equation*}
|||u-u_{h}|||_{*}^{2}\preceq\eta_{1}^{2}+\eta_{2}^{2}+\eta_{3}^{2}+{\rm h.o.t}.
\end{equation*}
On the other hand, when convection dominates, the local lower bound is \cite[Theorem 2]{John-Novo}
\begin{equation*}
\eta_{i}\preceq||u-u_{h}||_{\rm{SUPG}}+{\rm h.o.t.}, \quad i=1,2,3.
\end{equation*}
This leads to
\begin{equation*}
|||u-u_{h}|||_{*}\preceq||u-u_{h}||_{\rm{SUPG}}+{\rm h.o.t}.
\end{equation*}

Let $|||u-u_{h}|||_{\varepsilon} := ||u-u_{h}||_{\varepsilon}+||h^{1/2}\nabla(u-u_{h})||$. Since
\begin{equation*}
||u-u_{h}||_{\rm{SUPG}} \preceq |||u-u_{h}|||_{\varepsilon} \leq |||u-u||| \preceq |||u-u_{h}|||_{*},
\end{equation*}
$|||u-u_{h}|||_{*}$ is equivalent to $|||u-u_{h}|||_{\varepsilon}$ and $||u-u_{h}||_{\rm{SUPG}}$ when the higher order terms are negligible. This will be confirmed numerically in Section~\ref{NumExp}.
\end{remark}

%%%%%%%%%%%%%%%%%%%%%%%%%%%%%%%%%%%%%%%%%%%%%%%%%%%%%%%%%%%
%%%%%%%%%%%%%%%%%%%%%%%%%%%%%%%%%%%%%%%%%%%%%%%%%%%%%%%%%%%
\section{Flux Recovery}\label{FluxRec}
Introducing the flux variable $\sigma=-\varepsilon\nabla u$, the variational form of the flux
reads: find $\sigma\in H({\rm div};\Omega)$ such that
\begin{equation}\label{rocover1}
(\varepsilon^{-1}\sigma,\tau)=-(\nabla u,\tau)\quad \forall \tau\in H({\rm div};\Omega).
\end{equation}
In this paper, we use standard $RT_{0}$ or $BDM_{1}$ elements to recover the flux, which are
\begin{align*}
    & RT_{0}: =\{\tau\in H({\rm div};\Omega): \tau|_{K}\in P_{0}(K)^{d}+{\bf x}P_{0}(K) \quad \forall K\in\mathcal{T}_{h}\} \\
    \text{and} \hspace{0.5in}
    & BDM_{1}: =\{\tau\in H({\rm div};\Omega): \tau|_{K}\in P_{1}(K)^{d} \quad \forall K\in\mathcal{T}_{h}\},
\end{align*}
respectively. Let $u_{h}$ be the solution to (\ref{PDE6}) and $\mathcal{V}$ be $RT_{0}$ or $BDM_{1}$. We recover the flux by
solving the following problem: find $\sigma_{\nu}\in\mathcal{V}$ such that
\begin{equation}\label{recover2}
    (\varepsilon^{-1}\sigma_{\nu},\tau)=-(\nabla u_{h},\tau)\quad \forall\tau\in\mathcal{V}.
\end{equation}
We have the following {\it a priori} error estimates for the recovered flux.

%%%%%%%%%%%%%%%%%%%%
% Theorem
%%%%%%%%%%%%%%%%%%%%
\begin{theorem}
Let $u$, $u_h$, $\sigma$, and $\sigma_{\nu}$ be solutions to (\ref{PDE3}), (\ref{PDE6}), \eqref{rocover1},
and (\ref{recover2}), respectively. Then
there holds
\begin{equation*}
    ||\varepsilon^{-1/2}(\sigma-\sigma_{\nu})|| \preceq \inf_{\tau\in\mathcal{V}}||\varepsilon^{-1/2}(\sigma-\tau)||
    + ||\varepsilon^{1/2}\nabla(u-u_{h})||.
\end{equation*}
\end{theorem}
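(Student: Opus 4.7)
The plan is to treat the recovery problem (\ref{recover2}) as a perturbed version of the weighted $L^2$-projection of the true flux $\sigma$ onto $\mathcal{V}$, and use a Strang/Céa-style splitting. Concretely, let $\tilde{\sigma}\in\mathcal{V}$ denote the $\varepsilon^{-1}$-weighted $L^{2}$-projection of $\sigma$ onto $\mathcal{V}$, defined by
\begin{equation*}
    (\varepsilon^{-1}\tilde{\sigma},\tau)=(\varepsilon^{-1}\sigma,\tau) \quad \forall\tau\in\mathcal{V}.
\end{equation*}
By standard Hilbert-space projection theory, $\tilde{\sigma}$ realizes the best approximation of $\sigma$ in $\mathcal{V}$ with respect to the $\varepsilon^{-1/2}$-weighted $L^{2}$-norm, i.e. $\|\varepsilon^{-1/2}(\sigma-\tilde{\sigma})\|=\inf_{\tau\in\mathcal{V}}\|\varepsilon^{-1/2}(\sigma-\tau)\|$.

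Next I would exploit (\ref{rocover1}) to rewrite the defining relation for $\tilde{\sigma}$ as $(\varepsilon^{-1}\tilde{\sigma},\tau)=-(\nabla u,\tau)$ for all $\tau\in\mathcal{V}$, and subtract (\ref{recover2}) to obtain the consistency identity
\begin{equation*}
    (\varepsilon^{-1}(\tilde{\sigma}-\sigma_{\nu}),\tau)=-(\nabla(u-u_{h}),\tau) \quad \forall\tau\in\mathcal{V}.
\end{equation*}
Choosing the admissible test function $\tau=\tilde{\sigma}-\sigma_{\nu}\in\mathcal{V}$ and applying Cauchy--Schwarz with the weight pairing $\nabla(u-u_h)=\varepsilon^{-1/2}\cdot\varepsilon^{1/2}\nabla(u-u_h)$ yields
\begin{equation*}
    \|\varepsilon^{-1/2}(\tilde{\sigma}-\sigma_{\nu})\|^{2} \leq \|\varepsilon^{1/2}\nabla(u-u_{h})\|\,\|\varepsilon^{-1/2}(\tilde{\sigma}-\sigma_{\nu})\|,
\end{equation*}
and hence $\|\varepsilon^{-1/2}(\tilde{\sigma}-\sigma_{\nu})\|\leq\|\varepsilon^{1/2}\nabla(u-u_{h})\|$.

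Finally, the triangle inequality $\|\varepsilon^{-1/2}(\sigma-\sigma_{\nu})\|\leq\|\varepsilon^{-1/2}(\sigma-\tilde{\sigma})\|+\|\varepsilon^{-1/2}(\tilde{\sigma}-\sigma_{\nu})\|$ combined with the two bounds above delivers the claimed estimate. The argument is independent of whether $\mathcal{V}$ is $RT_{0}$ or $BDM_{1}$ since only the conformity $\mathcal{V}\subset H(\mathrm{div};\Omega)$ and the projection property are used. There is no real obstacle here; the only thing to be careful about is the consistent bookkeeping of the $\varepsilon^{\pm1/2}$ weights so that Cauchy--Schwarz produces the energy-type norm $\|\varepsilon^{1/2}\nabla(u-u_h)\|$ on the Galerkin perturbation side and the dual weighted norm on the flux side. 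An alternative route would be to test the identity directly with $\sigma-\sigma_{\nu}$, split it into $(\sigma-\tau)+(\tau-\sigma_{\nu})$, and absorb by Young's inequality; this gives the same bound but is less transparent than isolating the projection $\tilde{\sigma}$.
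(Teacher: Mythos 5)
Your proof is correct and is essentially the same argument the paper relies on: the paper's own ``proof'' merely cites \cite[Theorem 3.1]{cai1}, whose proof is precisely this route --- decompose through the $\varepsilon^{-1}$-weighted $L^{2}$-projection $\tilde{\sigma}$ of $\sigma$ onto $\mathcal{V}$, subtract \eqref{recover2} from the restriction of \eqref{rocover1} to $\mathcal{V}$ to get the error equation, test with $\tilde{\sigma}-\sigma_{\nu}$, apply Cauchy--Schwarz, and finish with the triangle inequality. Your version even yields the explicit constant $1$, slightly sharper than the stated $\preceq$.
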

\begin{proof}
Following the line of the proof of \cite[Theorem 3.1]{cai1}, we obtain the assertion.
\end{proof}

We next consider an explicit approximation of the flux in $RT_{0}$ (cf., e.g. \cite{cai1}).
For $e\in\mathcal{E}_{D} \cup \mathcal{E}_{N}$, let ${\bf n}_{e}$ be the outward unit normal vector
to $\Gamma$. For $e\in\mathcal{E}_{\Omega}$, let $K_{e}^{+}$ and $K_{e}^{-}$ be the two elements
sharing $e$, and let ${\bf n}_{e}$ be the outward unit normal vector of $K_{e}^{+}$. Let ${\bf a}_{e}^{\pm}$
be the opposite vertices of $e$ in $K_{e}^{\pm}$, respectively. Then the $RT_{0}$ basis function corresponding to $e$ is
\begin{equation*}
    \phi_{e}({\bf x}) :=\left \{ \begin{array}{ll}
    \displaystyle \frac{|e|}{d|K_{e}^{+}|}({\bf x}-{\bf a}_{e}^{+})\ \ \ \ \  & \mbox{for}\;  \ \ \ {\bf x}\in K_{e}^{+},\\
    \displaystyle -\frac{|e|}{d|K_{e}^{-}|}({\bf x}-{\bf a}_{e}^{-})\ \ \ \ \  & \mbox{for}\;  \ \ \ {\bf x}\in K_{e}^{-},\\
    0\ \ \ \ \  & \mbox{elsewhere},
    \end{array}\right.
\end{equation*}
where $|e|$ and $|K_{e}^{\pm}|$ are the $(d-1)$- and $d$-dimensional measures of $e$ and $K_{e}^{\pm}$, respectively. For
a boundary edge/face $e$, the corresponding basis function is
\begin{equation*}
    \phi_{e}({\bf x}) :=\left \{ \begin{array}{ll}
    \displaystyle \frac{|e|}{d|K_{e}^{+}|}({\bf x}-{\bf a}_{e}^{+})\ \ \ \ \  & \mbox{for}\;  \ \ \ {\bf x}\in K_{e}^{+},\\
    0\ \ \ \ \  & \mbox{elsewhere}.
    \end{array}\right.
\end{equation*}
Define the approximation $\hat{\sigma}_{RT_{0}}(u_{h})$ of $\boldsymbol{\tau}=-\varepsilon\nabla u_{h}$ in $RT_{0}$ by
\begin{equation}\label{recover4}
    \hat{\sigma}_{RT_{0}}(u_{h}) = \sum_{e\in\mathcal{E}} \hat{\sigma}_{e}\phi_{e} ({\bf x}),
\end{equation}
where $\hat{\sigma}_{e}$ is the normal component of $\hat{\sigma}_{RT_{0}}$ on
$e\in\mathcal{E}$ defined by
\begin{equation}\label{add2}
    \hat{\sigma}_{e} :=\left \{
    \begin{array}{ll}
        \gamma_{e}(\boldsymbol{\tau}|_{K_{e}^{+}}\cdot{\bf n}_{e})|_{e} +
        (1-\gamma_{e})(\boldsymbol{\tau}|_{K_{e}^{-}}\cdot{\bf n}_{e})|_{e} & \text{for }~ e\in\mathcal{E}_{\Omega},\\
        (\boldsymbol{\tau}|_{K_{e}^{+}}\cdot{\bf n}_{e})|_e & \text{for }~  e \in \mathcal{E}_{D}\cup\mathcal{E}_{N},
 \end{array}\right.
\end{equation}
with the constant $\gamma_{e}\in[0,1)$ to be determined in \eqref{Eq:DefnGm}. Note that the definition of
$\hat{\sigma}_{RT_{0}}(u_{h})$ is independent of
the choice of $K_{e}^{+}$ and $K_{e}^{-}$.

%%%%%%%%%%%%%%%%%%%%%%%%%%%%%%%%%%%%%%%%%%%%%%%%%%%%%%%%%%%
%%%%%%%%%%%%%%%%%%%%%%%%%%%%%%%%%%%%%%%%%%%%%%%%%%%%%%%%%%%
\section{{\em A posteriori} Error Estimates}\label{APostErr}
For $K\in\mathcal{T}_{h}$ and $e\in\mathcal{E}$, define weights $\alpha_{K} :=\min \big\{h_{K}\varepsilon^{-1/2}, \beta^{-1/2}, h_{K}^{1/2}\big\}$ and $\alpha_{e} := \min \big\{h_{e}^{1/2}\varepsilon^{-1/2}, \varepsilon^{-1/4}\beta^{-1/4}, 1\big\}$, and residuals
\begin{equation}
    \label{Eq:CellResidl}
    R_{K} :=f+\varepsilon\triangle u_{h}-{\bf a}\cdot\nabla u_{h}-bu_{h} \quad \text{and} \quad \tilde{R}_{K} :=f-
    \nabla\cdot\sigma_{h}-{\bf a}\cdot\nabla u_{h}-bu_{h},
\end{equation}
where $\sigma_{h}$ is the implicit or explicit recovered flux. Let
\begin{equation}\label{Eq:Phi}
    \Phi = \Big( \sum \limits_{K\in\mathcal{T}_{h}} \alpha_{K}^{2} (||R_{K}||_{K}^{2}+
    ||\tilde{R}_{K}||_{K}^{2}) + ||\varepsilon^{1/2}\nabla u_{h}+\varepsilon^{-1/2}\sigma_{h}||^{2} \Big)^{1/2}.
\end{equation}
We have the following error estimates.

%%%%%%%%%%%%%%%%%%%%
% Theorem
%%%%%%%%%%%%%%%%%%%%
\begin{theorem}\label{recover3}
Let $u$ and $u_{h}$ be the solutions to (\ref{PDE3}) and (\ref{PDE6}), respectively. Let $\Phi$ be defined in \eqref{Eq:Phi}. If $\sigma_{h} = \hat{\sigma}_{RT_{0}} (u_{h})$ is the recovered flux obtained by the explicit approximation (\ref{recover4}), then
\begin{equation}\label{recover5}
    |||u-u_{h}|||_{*} \preceq \Phi + \Big( \sum_{e\subset\Gamma_{N}} \alpha_{e}^{2}||g-\varepsilon\nabla u_{h}\cdot{\bf n}||_{e}^{2} \Big)^{1/2}.
\end{equation}
If $\sigma_{h}=\sigma_{\nu}$ is the recovered flux obtained by the implicit approximation (\ref{recover2}), then
\begin{equation}\label{aaa4}
    |||u-u_{h}|||_{*} \preceq  \Phi + \Big( \sum_{e\subset\Gamma_{N}} \alpha_{e}^{2} (||g-\varepsilon\nabla u_{h}\cdot{\bf n}||_{e}^{2} + ||(\sigma_{h} +
    \varepsilon\nabla u_{h}) \cdot{\bf n}||_{e}^{2}) \Big)^{1/2}.
\end{equation}
\end{theorem}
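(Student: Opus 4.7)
The plan is to work from the dual-norm definition \eqref{dual norm}: fix $v\in H_D^1(\Omega)$, estimate $B(u-u_h,v)$, and divide by $|||v|||$. The first step is to invoke SUPG Galerkin orthogonality with the Cl\'ement interpolant $I_h v\in V_h$. Since $B_\delta(u_h,I_h v)=l_\delta(I_h v)$ and $u$ satisfies (\ref{PDE3}),
\[
 B(u-u_h,v)=B(u-u_h,v-I_h v)-\sum_{K\in\mathcal T_h}\delta_K(R_K,\mathbf a\cdot\nabla I_h v)_K.
\]
Next I would rewrite $B(u-u_h,v-I_h v)=(f,v-I_h v)+\langle g,v-I_h v\rangle_{\Gamma_N}-B(u_h,v-I_h v)$ and, in the diffusive piece of $B(u_h,v-I_h v)$, replace $\varepsilon\nabla u_h$ by $-\sigma_h+(\sigma_h+\varepsilon\nabla u_h)$. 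Because $\sigma_h\in H(\mathrm{div};\Omega)$, element-wise integration by parts against $-\sigma_h$ has no interelement jumps and produces $(\nabla\cdot\sigma_h,v-I_h v)-\langle\sigma_h\cdot\mathbf n,v-I_h v\rangle_{\Gamma_N}$. Collecting terms gives the identity
\[
 B(u-u_h,v-I_h v)=\sum_{K}(\tilde R_K,v-I_h v)_K+\langle g+\sigma_h\cdot\mathbf n,v-I_h v\rangle_{\Gamma_N}-(\sigma_h+\varepsilon\nabla u_h,\nabla(v-I_h v)).
\]

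The second step is a termwise Cauchy-Schwarz argument, with weights tuned to match the minima in Lemma \ref{convection 1}. The cell-residual terms $(\tilde R_K,v-I_h v)_K$ and $\delta_K(R_K,\mathbf a\cdot\nabla I_h v)_K$ are bounded by $(\sum_K\alpha_K^2\|\tilde R_K\|_K^2)^{1/2}|||v|||$ and $(\sum_K\alpha_K^2\|R_K\|_K^2)^{1/2}|||v|||$ via \eqref{convection 3} and \eqref{convection 2} respectively, because $\alpha_K^{-2}=\max\{\beta,\varepsilon h_K^{-2},h_K^{-1}\}$ is exactly the weight appearing in those estimates. The flux-mismatch term is handled by $|(\sigma_h+\varepsilon\nabla u_h,\nabla(v-I_h v))|\le\|\varepsilon^{1/2}\nabla u_h+\varepsilon^{-1/2}\sigma_h\|\cdot\|\varepsilon^{1/2}\nabla(v-I_h v)\|$, and the second factor is $\lesssim\|v\|_\varepsilon\le|||v|||$ by $H^1$-stability of $I_h$.

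The final and most delicate step is the Neumann contribution. I would split
\[
 g+\sigma_h\cdot\mathbf n=(g-\varepsilon\nabla u_h\cdot\mathbf n)+(\sigma_h\cdot\mathbf n+\varepsilon\nabla u_h\cdot\mathbf n)
\]
and apply \eqref{convection 4} with $\alpha_e^{-2}=\max\{\varepsilon^{1/2}\beta^{1/2},\varepsilon h_e^{-1},1\}$. For the explicit $RT_0$ recovery, the boundary branch of \eqref{add2} gives $\hat\sigma_{RT_0}\cdot\mathbf n|_e=(-\varepsilon\nabla u_h|_{K_e^+}\cdot\mathbf n_e)|_e$ on $e\subset\Gamma_N$, so the second split piece vanishes identically, yielding \eqref{recover5}. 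For the implicit $L^2$-projection recovery this identity fails; the second piece survives and must be kept, producing the additional boundary sum in \eqref{aaa4}. The main obstacle I anticipate is precisely this bookkeeping at $\Gamma_N$: one must ensure the algebraic cancellation using \eqref{add2} for the explicit case, and match the right weight $\alpha_e^2$ for the implicit case so that the lemma is applicable in both. After dividing by $|||v|||$ and taking the supremum, the bounds (\ref{recover5}) and (\ref{aaa4}) follow.
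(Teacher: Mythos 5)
Your proposal is correct and follows essentially the same route as the paper's proof: Galerkin orthogonality with the Cl\'ement interpolant, integration by parts against the $H(\mathrm{div})$-conforming recovered flux to produce $\tilde R_K$, the flux-mismatch term, and the boundary term $g+\sigma_h\cdot\mathbf{n}$, then Lemma \ref{convection 1} with the weights $\alpha_K$, $\alpha_e$. Your handling of the Neumann boundary (exact cancellation via \eqref{add2} for the explicit recovery, triangle inequality for the implicit one) is precisely what the paper does.
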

\begin{proof} For any $v\in H_{D}^{1}(\Omega)$, let $I_{h}v$ be the Cl\'{e}ment interpolation of $v$. Using \eqref{PDE3}, and
integration by parts, we have
\begin{align*}
    B(u-u_{h},v) & = (f-{\bf a}\cdot\nabla u_{h}-bu_{h},v) - (\varepsilon\nabla u_{h},\nabla v)+<g,v>_{\Gamma_{N}} \\
    & = (\tilde{R}_{K},v) - (\varepsilon^{1/2}\nabla u_{h}+\varepsilon^{-1/2}\sigma_{h},\varepsilon^{1/2}\nabla v) +
    <g+\sigma_{h}\cdot{\bf n},v>_{\Gamma_{N}},
\end{align*}
which implies
\begin{align}
    \nonumber
    B(u-u_{h},v-I_{h}v) = & -\sum_{K\in\mathcal{T}_{h}}(\varepsilon^{1/2}\nabla u_{h}+
    \varepsilon^{-1/2}\sigma_{h},\varepsilon^{1/2}\nabla(v-I_{h}v))_{K} \\
    \label{dominated 2}
    & +\sum_{K\in\mathcal{T}_{h}}(\tilde{R}_{K},v-I_{h}v)_{K}+\sum_{e\subset\Gamma_{N}}<g+\sigma_{h}\cdot{\bf n},v-I_{h}v>_{e}.
\end{align}
Subtracting \eqref{PDE6} from \eqref{PDE3}, we get
\begin{equation}\label{dominated 3}
B(u-u_{h},I_{h}v)=-\sum_{K\in\mathcal{T}_{h}}\delta_{K}(R_{K},{\bf
a}\cdot\nabla(I_{h}v))_{K}.
\end{equation}
On the other hand, the Cl\'{e}ment interpolation operator possesses the following
stable estimate (cf. \cite[Exercise 3.2.3]{Ciarlet} and \cite{Clement,Verfurth1,San08})
\begin{equation*}
||\nabla(v-I_{h}v)||_{K}\preceq||\nabla v||_{\tilde{\omega}_{K}} \quad \forall K\in\mathcal{T}_{h}, \quad v\in H^{1}(\tilde{\omega}_{K}),
\end{equation*}
where $\tilde{\omega}_{K}$ is the union of all elements in $\mathcal{T}_{h}$ sharing at least
one point with $K$. Then from (\ref{dominated 2}), (\ref{dominated 3}), and Lemma~\ref{convection 1}, we obtain
\begin{align}
    \nonumber
    B(u-u_{h}, &v)=B(u-u_{h},v-I_{h}v)+B(u-u_{h},I_{h}v)\\
    \nonumber
    \preceq \Big( & \sum_{K\in\mathcal{T}_{h}}\max\{\beta,\varepsilon h_{K}^{-2},h_{K}^{-1}\}^{-1}(||R_{K}||_{K}^{2}+
    ||\tilde{R}_{K}||_{K}^{2})+||\varepsilon^{1/2}\nabla u_{h}+\varepsilon^{-1/2}\sigma_{h}||^{2}\\
    \label{dominated 4}
    & + \sum_{e\subset\Gamma_{N}}\max\{\varepsilon^{1/2} \beta^{1/2},\varepsilon h_{e}^{-1},1\}^{-1}||g+
    \sigma_{h}\cdot{\bf n}||_{e}^{2} \Big)^{1/2}|||v|||.
\end{align}
If $\sigma_{h}$ is the recovery flux obtained by its explicit
approximation (\ref{recover4}), i.e., $\sigma_{h} = \hat{\sigma}_{RT_{0}} (u_{h})$, then we have from the
construction of $\hat{\sigma}_{RT_{0}} (u_{h})$ that
\begin{equation*}
\sigma_{h}\cdot{\bf n}=-\varepsilon\nabla u_{h}\cdot{\bf n}\ \ \ {\rm on}\ \ \Gamma_{N}.
\end{equation*}
Thus (\ref{recover5}) follows from (\ref{dominated 4}). If $\sigma_{h}$ is the recovery flux obtained by the implicit
approximation (\ref{recover2}), i.e., $\sigma_{h}=\sigma_{\nu}$, then (\ref{aaa4}) follows from a triangle
inequality and (\ref{dominated 4}).
\end{proof}

\begin{figure}[t]
    \centering
    \includegraphics[width=5in]{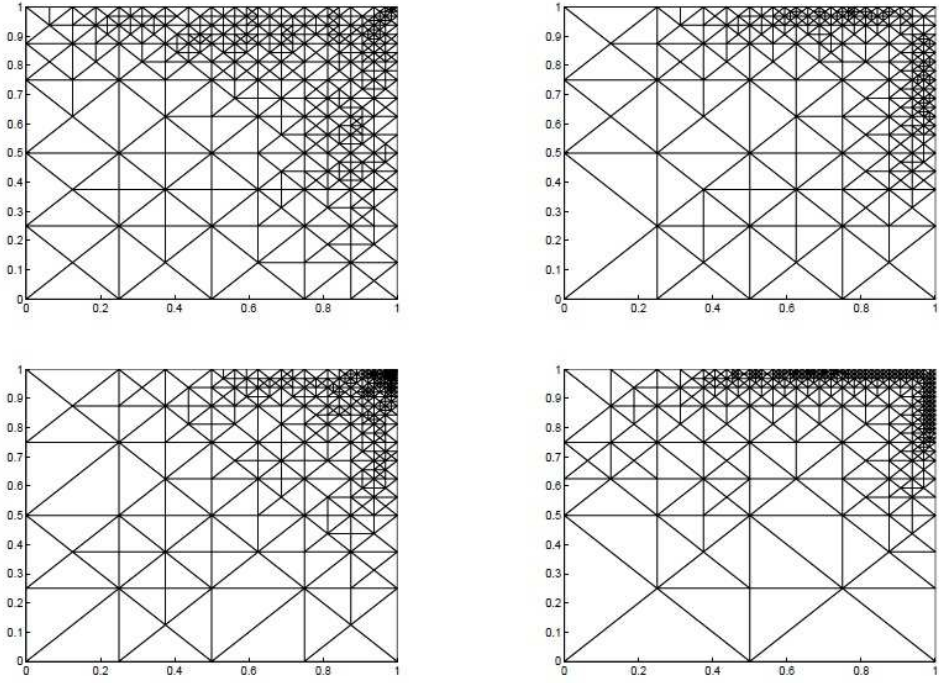}
    \caption{Top: the meshes by using $\varepsilon=0.01$, $\delta_{K}=h_{K}$, and $\theta=0.3$; Bottom: the meshes by using $\varepsilon=0.0001$, $\delta_{K}=h_{K}$, and $\theta=0.5$; Left: the meshes by using $\big(\sum_{e\in\mathcal{E}_{h}}\alpha_{e}^{2} ||R_{e}||_{e}^{2}\big)^{1/2}$; and Right: the meshes by using $||\varepsilon^{-1/2}\sigma_{h}+\varepsilon^{1/2}\nabla u_{h}||$ with $\sigma_{h}$ the implicit recovery flux by (\ref{recover2}). The top-left, top-right, bottom-left, and bottom-right plots are meshes after 10, 8, 8, and 8 iterations with 640, 473, 635, and 749 triangles, respectively.}
    \label{aaaabb1}
\end{figure}

\begin{remark}
Compared with the estimators developed in \cite{Du}, the jump in the
normal component of the flux is replaced by the residual
$||\varepsilon^{1/2}\nabla u_{h} + \varepsilon^{-1/2} \sigma_{h}||$
in Theorem~\ref{recover3}. Moreover, a residual term
$\sum_{K\in\mathcal{T}_{h}} \alpha_{K}^{2}
||\tilde{R}_{K}||_{K}^{2}$ and another residual term of the Neumann
boundary data occur in the {\em a posteriori} error estimators. In
$||\varepsilon^{1/2}\nabla u_{h}+\varepsilon^{-1/2}\sigma_{h}||$,
the impacts of $\varepsilon$ and $h$ are implicitly accounted, which
however are expressed explicitly in $\alpha_{e}$, and hence in
$\big(\sum_{e\in\mathcal{E}_{h}}
\alpha_{e}^{2}||R_{e}||_{e}^{2}\big)^{1/2}$, in \cite{Du} (e.g., if
$\varepsilon\leq h_{e}$ and $\beta=1$, then $\alpha_{e}=1$). To
illustrate the difference in numerical results, we provide in
Figure \ref{aaaabb1} the adaptive meshes by the two estimators for Section 7 Example 1. It is observed that the
quality of the meshes generated by $||\varepsilon^{1/2}\nabla u_{h}
+ \varepsilon^{-1/2} \sigma_{h}||$ is better than that of the meshes
by $\big(\sum_{e\in\mathcal{E}_{h}}\alpha_{e}^{2}
||R_{e}||_{e}^{2}\big)^{1/2}$.
\end{remark}

%%%%%%%%%%%%%%%%%%%%%%%%%%%%%%%%%%%%%%%%%%%%%%%%%%%%%%%%%%%
%%%%%%%%%%%%%%%%%%%%%%%%%%%%%%%%%%%%%%%%%%%%%%%%%%%%%%%%%%%
\section{Analysis of efficiency}\label{Sec_analysis}
Let $\boldsymbol{\tau}=-\varepsilon\nabla u_{h}$. For each $e\in\mathcal{E}_{\Omega}$, define the edge/face residual along $e$ by
\begin{equation*}
    R_{e} :=\left \{
    \begin{array}{ll}
        J_{e}(\boldsymbol{\tau}) & \mbox{if}\ \ e\nsubseteq\Gamma,\\
        g+\boldsymbol{\tau}\cdot{\bf n}_{e} & \mbox{if}\ \ e\subset\Gamma_{N},\\
        0 & \mbox{if}\ \ e\subset\Gamma_{D},
    \end{array}\right.
\end{equation*}
where $J_{e}(\boldsymbol{\tau})$ is defined in \eqref{Eq:DefnJe}. Let $\Pi_{k}$ be an $L^{2}$-projection operator into $P_{k}(K)$ and
\begin{equation*}
    {\rm osc}_{h}:=\Big(\sum_{K\in\mathcal{T}_{h}}\alpha_{K}^{2}||D_{K}||_{K}^{2} +
    \sum_{e\subset\Gamma_{N}}\alpha_{e}^{2}||D_{e}||_{e}^{2} \Big)^{1/2},
\end{equation*}
be an oscillation of data, where $D_{K}=R_{K}-\Pi_{k}R_{K} $ for every $K\in\mathcal{T}_{h}$,
and $D_{e}=R_{e}-\Pi_{k}R_{e}$ for each $e\subset\Gamma_{N}$. The following efficient estimate is found in \cite{Du}.

%%%%%%%%%%%%%%%%%%%%
% Lemma
%%%%%%%%%%%%%%%%%%%%
\begin{lemma}\label{recover15}
Let $u$ and $u_{h}$ be the solutions to problems (\ref{PDE3}) and (\ref{PDE6}), respectively. Then the error is bounded from below by
\begin{equation}
    \Big(\sum_{K\in\mathcal{T}_{h}} \big(\alpha_{K}^{2}||R_{K}||_{K}^{2} +
    \sum_{e\subset\partial K} \alpha_{e}^{2}||R_{e}||_{e}^{2} \big) \Big)^{1/2} \preceq |||u-u_{h}|||_{*}+{\rm osc}_{h}.
\end{equation}
\end{lemma}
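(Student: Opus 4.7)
The plan is to establish the lower bound separately for the element residual $\alpha_K||R_K||_K$ and the edge/face residual $\alpha_e||R_e||_e$, using the standard Verf\"urth bubble-function technique, but with test functions calibrated so that the dual-norm characterization
\begin{equation*}
    |||u-u_h|||_* = \sup_{v\in H_D^1(\Omega)\setminus\{0\}} \frac{B(u-u_h,v)}{|||v|||}
\end{equation*}
reproduces exactly the weights $\alpha_K$ and $\alpha_e$. The key observation driving the scaling is that $\alpha_K = \min\{h_K\varepsilon^{-1/2},\beta^{-1/2},h_K^{1/2}\}$ is the reciprocal of $\max\{\varepsilon^{1/2}h_K^{-1},\beta^{1/2},h_K^{-1/2}\}$, and similarly for $\alpha_e$, so each term in $|||v|||$ can be offset against the appropriate inverse estimate applied to a bubble function.

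For the element residual I would take the interior bubble $b_K$ on $K$ with $||b_K^{1/2}p||_K\thickapprox ||p||_K$ for polynomials $p$, define $v_K := \alpha_K^{2}(\Pi_k R_K)\,b_K$, and extend it by zero outside $K$. Since $v_K\in H_D^1(\Omega)$ has support in $K$, the elementwise residual identity obtained from integrating by parts in $B(u-u_h,v_K)$ reduces to
\begin{equation*}
    B(u-u_h,v_K) = (R_K,v_K)_K.
\end{equation*}
Bubble equivalence combined with $R_K = \Pi_k R_K + D_K$ gives
\begin{equation*}
    \alpha_K^{2}||\Pi_k R_K||_K^{2} \preceq (R_K,v_K)_K - (D_K,v_K)_K \le |||u-u_h|||_*\,|||v_K||| + ||D_K||_K\,||v_K||_K.
\end{equation*}
Standard inverse estimates yield $||\nabla v_K||_K \preceq h_K^{-1}||v_K||_K \preceq h_K^{-1}\alpha_K^{2}||\Pi_k R_K||_K$; the fractional piece is handled by interpolation $||v_K||_{H^{1/2}(\Omega)}\preceq ||v_K||^{1/2}||v_K||_{1}^{1/2}$. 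The calibration then delivers $|||v_K||| \preceq \alpha_K||\Pi_k R_K||_K$. Dividing through, using $\alpha_K||R_K||_K \le \alpha_K||\Pi_k R_K||_K + \alpha_K||D_K||_K$, and absorbing $\alpha_K||D_K||_K$ into ${\rm osc}_h$ produces the elementwise bound.

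For the edge/face residual I would use the edge bubble $b_e$ supported on the patch $\omega_e = K_e^+\cup K_e^-$ (or $K_e^+$ alone on a boundary face), extend $\Pi_k R_e$ by a polynomial lifting $\tilde P$ to $\omega_e$, and set $v_e := \alpha_e^{2}\tilde P\,b_e \in H_D^1(\Omega)$. Now elementwise integration by parts produces both the desired boundary contribution on $e$ and interior cell residuals on $K\subset\omega_e$:
\begin{equation*}
    \langle R_e,v_e\rangle_e = B(u-u_h,v_e) - \sum_{K\subset\omega_e}(R_K,v_e)_K.
\end{equation*}
Bubble norm-equivalence on $e$ and the scalings $||v_e||_K \preceq h_e^{1/2}\alpha_e^{2}||\Pi_k R_e||_e$ and $||\nabla v_e||_K \preceq h_e^{-1/2}\alpha_e^{2}||\Pi_k R_e||_e$, together with the calibrated $\alpha_e$, give $|||v_e||| \preceq \alpha_e||\Pi_k R_e||_e$. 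The element bound already obtained is then invoked to absorb $\sum_{K\subset\omega_e}||R_K||_K\,||v_e||_K$, and $\alpha_e||D_e||_e$ is thrown into ${\rm osc}_h$. Squaring, summing over $\mathcal{E}$, and using the finite overlap of patches $\omega_e$ yields the stated inequality.

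The main obstacle is verifying the sharp estimates $|||v_K||| \preceq \alpha_K||\Pi_k R_K||_K$ and $|||v_e||| \preceq \alpha_e||\Pi_k R_e||_e$: the $H^{1/2}(\Omega)$-piece of $|||\cdot|||$ does not localize to element patches, so it must be controlled indirectly by interpolating between $L^{2}$ and $H^{1}$, and one must check that in each of the three regimes distinguished by the minima defining $\alpha_K$ and $\alpha_e$ (diffusion-, reaction-, and mesh-dominated) the active candidate weight really does dominate the corresponding term of $|||v|||$. This case alignment, together with the judicious choice of the power of $\alpha$ in $v_K$ and $v_e$, is the technical heart of the argument.
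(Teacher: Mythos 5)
The paper offers no proof of this lemma: it is quoted verbatim from \cite{Du} (the sentence preceding the statement reads ``The following efficient estimate is found in [Du]''), so there is nothing in-paper to compare your argument against. Your reconstruction is the standard Verf\"urth-type bubble-function argument adapted to the dual norm, which is indeed how such estimates are established in the cited reference, and the calibration does work out: $\alpha_K^{-2}=\max\{\varepsilon h_K^{-2},\beta,h_K^{-1}\}$ matches exactly the local scaling of $|||v_K|||$ produced by the inverse estimates, and the edge weight is handled similarly (the stray term $\beta h_e$ is absorbed since $\beta$ and $h_e$ are both $\mathcal{O}(1)$). One step deserves more care than your closing sentence suggests: you cannot simply square and sum the local inequalities $\alpha_e||\Pi_k R_e||_e\preceq |||u-u_h|||_{*}+\dots$, since each one already contains the \emph{global} dual norm and the sum would pick up a factor of the number of elements. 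The passage to the global bound must go through a single aggregated test function $v=\sum_K c_K v_K+\sum_e c_e v_e$ with suitably chosen coefficients, for which finite overlap of the supports gives $|||v|||^2\preceq\sum_K\alpha_K^2||\Pi_k R_K||_K^2+\sum_e\alpha_e^2||\Pi_k R_e||_e^2$ for the locally additive pieces of $|||\cdot|||$, while the nonadditive $H^{1/2}(\Omega)$ contribution is again controlled by interpolating between $L^2$ and $H^1$ and exploiting the $\max$ structure of the norm. Your mention of finite overlap indicates you have this in mind; with that step made explicit the proposal is sound and follows essentially the same route as the source.
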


%%%%%%%%%%%%%%%%%%%%
% Lemma
%%%%%%%%%%%%%%%%%%%%
\begin{lemma}\label{recover6}
Let $u$ and $u_{h}$ be the solutions to (\ref{PDE3}) and (\ref{PDE6}), respectively. If $\hat{\sigma}_{RT_{0}}(u_{h})$ is the
explicit recovery flux given by (\ref{recover4}), then it holds
\begin{equation}\label{efficiency1}
    ||\varepsilon^{-1/2}\hat{\sigma}_{RT_{0}}(u_{h})+\varepsilon^{1/2}\nabla u_{h}||\preceq |||u-u_{h}|||_{*}+{\rm osc}_{h}.
\end{equation}
\end{lemma}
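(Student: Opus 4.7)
The plan is to estimate $\mathbf{r} := \varepsilon^{-1/2}\hat{\sigma}_{RT_{0}}(u_{h}) + \varepsilon^{1/2}\nabla u_{h}$ element by element in terms of the edge residuals $R_e$, and then invoke Lemma~\ref{recover15} to pass from the weighted edge residual to $|||u-u_h|||_* + \mathrm{osc}_h$.

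First, I would observe that on every $K \in \mathcal{T}_h$, the restriction $(\hat{\sigma}_{RT_{0}}(u_{h}) + \varepsilon\nabla u_{h})|_K$ lies in $RT_0(K)$, since $-\varepsilon\nabla u_h|_K$ is constant and $\hat{\sigma}_{RT_0}(u_h)|_K \in RT_0(K)$ by construction. The normal trace on each face $e\subset\partial K$ can then be read off directly from \eqref{add2}: on an interior face, with $\boldsymbol{\tau}=-\varepsilon\nabla u_h$, one side of the jump gives $(1-\gamma_e)J_e(\boldsymbol{\tau})$ and the other gives $-\gamma_e J_e(\boldsymbol{\tau})$ (up to sign), whereas on any face $e\subset\Gamma$ the definition forces $\hat{\sigma}_e=\boldsymbol{\tau}|_{K_e^+}\cdot\mathbf{n}_e$, so the trace vanishes identically. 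Thus boundary faces contribute nothing to $\|\mathbf{r}\|_K$.

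Next I would apply the standard norm equivalence on the finite-dimensional space $RT_0(K)$,
\[
\|\tau\|_K^{2}\approx h_K\sum_{e\subset\partial K}\|\tau\cdot\mathbf{n}_e\|_e^{2}\qquad\forall\tau\in RT_0(K),
\]
to obtain
\[
\|\mathbf{r}\|_K^{2}\preceq \varepsilon^{-1}h_K\!\sum_{e\subset\partial K\cap\mathcal{E}_\Omega}\omega_{K,e}\,\|R_e\|_e^{2},
\qquad \omega_{K,e}\in\{\gamma_e^{2},(1-\gamma_e)^{2}\}.
\]
The crucial step is then to match this element‐level bound to the weight $\alpha_e^{2}=\min\{h_e/\varepsilon,\varepsilon^{-1/2}\beta^{-1/2},1\}$ featured in Lemma~\ref{recover15}: one verifies that the specific choice of $\gamma_e\in[0,1)$ from \eqref{Eq:DefnGm} yields $\varepsilon^{-1}h_K\,\omega_{K,e}\preceq\alpha_e^{2}$, balancing the weight against the regime (diffusion/reaction/convection dominated) selected by the minimum in $\alpha_e^{2}$. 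Summing over all elements and faces gives
\[
\|\mathbf{r}\|^{2}\preceq\sum_{e\in\mathcal{E}}\alpha_e^{2}\|R_e\|_e^{2},
\]
and Lemma~\ref{recover15} delivers \eqref{efficiency1}.

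The main obstacle is precisely this calibration between the geometric weight produced by the $RT_0$ scaling and the $\varepsilon$, $\beta$-dependent weight $\alpha_e^{2}$: the generic inequality only gives $\varepsilon^{-1}h_e\|R_e\|_e^{2}$, which in the reaction– or convection–dominated regime is much larger than $\alpha_e^{2}\|R_e\|_e^{2}$. Robustness with respect to $\varepsilon$ and $\beta$ therefore hinges entirely on the fact that the convex weight $\gamma_e$ has been designed in \eqref{Eq:DefnGm} to collapse to the same minimum structure as $\alpha_e^{2}$, a verification that must be done case by case according to which of the three quantities in the definition of $\alpha_e$ is active on the edge under consideration.
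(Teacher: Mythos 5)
Your overall strategy coincides with the paper's: on each $K$ write $\hat{\sigma}_{RT_{0}}(u_{h})+\varepsilon\nabla u_{h}$ as an $RT_{0}(K)$ function whose coefficients are the weighted normal jumps, convert its $L^{2}(K)$ norm into edge terms by the $RT_{0}$ scaling, calibrate $\gamma_{e}$ against $\alpha_{e}$, and close with Lemma~\ref{recover15}. (The paper bounds $\|\phi_{e}\|_{K}^{2}\preceq|K|$ directly rather than quoting the trace norm equivalence on $RT_{0}(K)$; that difference is cosmetic.)

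The genuine gap is in the calibration step, which you correctly identify as the crux but then assert without verification. For the branch $\omega_{K,e}=(1-\gamma_{e})^{2}$ the choice \eqref{Eq:DefnGm} gives $(1-\gamma_{e})^{2}h_{e}/\varepsilon=\alpha_{e}^{2}$ exactly, as intended. But for the branch $\omega_{K,e}=\gamma_{e}^{2}$ the claimed inequality $\varepsilon^{-1}h_{K}\gamma_{e}^{2}\preceq\alpha_{e}^{2}$ is false in exactly the regime where robustness matters: if $e$ is convection-dominated, say $\alpha_{e}=1$ and $h_{e}\gg\varepsilon$, then $\gamma_{e}=1-(\varepsilon/h_{e})^{1/2}\to 1$ and $\varepsilon^{-1}h_{e}\gamma_{e}^{2}\approx h_{e}/\varepsilon\gg 1=\alpha_{e}^{2}$ (similarly when $\alpha_{e}=\varepsilon^{-1/4}\beta^{-1/4}$ is the active minimum). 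The resulting edge contribution exceeds $\alpha_{e}^{2}\|R_{e}\|_{e}^{2}$ by the factor $h_{e}/(\varepsilon\alpha_{e}^{2})$, which Lemma~\ref{recover15} cannot absorb. Note that the paper's proof never produces a $\gamma_{e}^{2}$ term at all: its local identity reads $\hat{\sigma}_{RT_{0}}(u_{h})-\boldsymbol{\tau}=\sum_{e\subset\partial K\setminus\Gamma}(1-\gamma_{e})J_{e}(\boldsymbol{\tau})\phi_{e}$, i.e.\ the factor $(1-\gamma_{e})$ is attached to every edge of every element, and then the single identity $(1-\gamma_{e})^{2}h_{e}/\varepsilon=\alpha_{e}^{2}$ suffices. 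Your trace computation (coefficient $-(1-\gamma_{e})J_{e}$ seen from $K_{e}^{+}$ but $+\gamma_{e}J_{e}$ seen from $K_{e}^{-}$) is what \eqref{add2} literally gives with a fixed orientation, so you must either reconcile it with the paper's identity or supply a separate argument controlling the $K_{e}^{-}$-side contribution; as written, the proposal does not establish \eqref{efficiency1}.
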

\begin{proof}
For any element $K\in\mathcal{T}_{h}$ and an edge/face $e\subset\partial K$, let ${\bf n}_{e}$ be the outward unit vector normal to $\partial K$. Note that $\boldsymbol{\tau}=-\varepsilon\nabla u_{h}$ on $K$ is a constant vector. Let $\boldsymbol{\tau}_{e,K} = (\boldsymbol{\tau}|_{K} \cdot{\bf n}_{e})|_{e}$ be the normal component of $\boldsymbol{\tau}$ on $e$. There holds the representation in $RT_{0}$: $\boldsymbol{\tau} = \sum_{e\subset\partial K} \boldsymbol{\tau}_{e,K} \phi_{e}({\bf x})$. Then, for ${\bf x}\in K$, (\ref{recover4}) and (\ref{add2}) give
\begin{equation*}
    \hat{\sigma}_{RT_{0}}(u_{h})-\boldsymbol{\tau} = \sum_{e\subset\partial K\cap\mathcal{E}_{\Omega}} (\hat{\sigma}_{e}-
    \boldsymbol{\tau}|_{e}\cdot{\bf n}_{e}) \phi_{e}({\bf x})
    =\sum_{e\subset\partial K\setminus\Gamma}(1-\gamma_{e})J_{e}(\boldsymbol{\tau})\phi_{e}({\bf x}),
\end{equation*}
where, for the two elements $K_{e}^{+}$ and $K_{e}^{-}$ sharing $e$,
\begin{equation}\label{Eq:DefnJe}
    J_{e}(\boldsymbol{\tau})=(\boldsymbol{\tau}|_{K_{e}^{+}}-\boldsymbol{\tau}|_{K_{e}^{-}})\cdot{\bf n}_{e}.
\end{equation}
This identity implies
\begin{align}
    \nonumber
    & ||\varepsilon^{-1/2}(\hat{\sigma}_{RT_{0}}(u_{h})-\boldsymbol{\tau})||_{K}^{2}
    \preceq \sum_{e\subset\partial K\setminus\Gamma}\frac{(1-\gamma_{e})^{2}}{\varepsilon}||J_{e}(\boldsymbol{\tau})\phi_{e}({\bf x})||_{K}^{2} \\
    \label{duaddition}
    \leq & \sum_{e\subset\partial K\setminus\Gamma}\frac{(1-\gamma_{e})^{2}}{\varepsilon}|J_{e}(\boldsymbol{\tau})|^{2}||\phi_{e}({\bf x})||_{K}^{2}
    \preceq \sum_{e\subset\partial K\setminus\Gamma}\frac{(1-\gamma_{e})^{2}}{\varepsilon}||J_{e}(\boldsymbol{\tau})||_{e}^{2}h_{e},
\end{align}
where, in the last step, we employ the fact that $J_{e}(\boldsymbol{\tau})$ is constant and $||\phi_{e}({\bf x})||_{K}^{2}\preceq|K|$.

Now, for each $e\in\mathcal{E}_{\Omega}$ we choose
\begin{equation}\label{Eq:DefnGm}
    \gamma_{e}=1-\alpha_{e}\varepsilon^{1/2}h_{e}^{-1/2}
\end{equation}
so that ${(1-\gamma_{e})^{2}h_{e}} / {\varepsilon} = 1$. Since $\alpha_{e}\leq\sqrt{h_{e} / \varepsilon}$, thus $0\leq\gamma_{e}<1$. This choice together with
the definition of the edge$/$face residual
$R_{e}$ leads to
\begin{equation*}
    \frac{(1-\gamma_{e})^{2}}{\varepsilon}h_{e}||J_{e}(\boldsymbol{\tau})||_{e}^{2} \leq
    \alpha_{e}^{2}||R_{e}||_{e}^{2},
\end{equation*}
which, with (\ref{duaddition}), implies
\begin{equation}\label{recover10}
||\varepsilon^{-1/2}\hat{\sigma}_{RT_{0}}(u_{h}) + \varepsilon^{1/2}\nabla u_{h}||_{K}^{2}
\preceq\displaystyle\sum_{e\subset\partial K\setminus\Gamma}\alpha_{e}^{2}||R_{e}||_{e}^{2}.
\end{equation}
Summing up (\ref{recover10}) over all $K\in\mathcal{T}_{h}$, we obtain
\begin{equation}\label{recover11}
    ||\varepsilon^{-1/2}\hat{\sigma}_{RT_{0}}(u_{h})+\varepsilon^{1/2}\nabla u_{h}||^{2}
    \preceq \sum_{K\in\mathcal{T}_{h}}\sum_{e\subset\partial K\setminus\Gamma}\alpha_{e}^{2}||R_{e}||_{e}^{2}
    \preceq \sum_{K\in\mathcal{T}_{h}}\sum_{e\subset\partial K}\alpha_{e}^{2}||R_{e}||_{e}^{2}.
\end{equation}
The desired estimate (\ref{efficiency1}) follows from (\ref{recover11}) and (\ref{recover15}).
\end{proof}

%%%%%%%%%%%%%%%%%%%%
% Lemma
%%%%%%%%%%%%%%%%%%%%
\begin{lemma}\label{yingli5.2}
Under the assumption of Lemma \ref{recover6}, if $\sigma_{\nu}$ is the implicit recovery
flux obtained by (\ref{recover2}), then it holds
\begin{equation}\label{yingli5.1}
    ||\varepsilon^{-1/2}\sigma_{\nu}+\varepsilon^{1/2}\nabla u_{h}||\preceq|||u-u_{h}|||_{*}+{\rm osc}_{h}.
\end{equation}
\end{lemma}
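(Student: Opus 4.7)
The plan is to exploit the Galerkin orthogonality that characterizes $\sigma_\nu$ from (\ref{recover2}) and reduce the implicit case to the already-proved explicit case of Lemma~\ref{recover6}. Rewriting (\ref{recover2}) as
\begin{equation*}
    (\varepsilon^{-1}\sigma_\nu + \nabla u_h,\, \tau) = 0 \quad \forall \tau \in \mathcal{V},
\end{equation*}
one sees that $\varepsilon^{-1}\sigma_\nu$ is exactly the $L^2$-orthogonal projection of $-\nabla u_h$ onto $\mathcal{V}$. Consequently, for any $\tau \in \mathcal{V}$, a standard Pythagoras/best-approximation manipulation (writing $\varepsilon^{-1/2}\sigma_\nu + \varepsilon^{1/2}\nabla u_h = \varepsilon^{1/2}(\varepsilon^{-1}\sigma_\nu + \nabla u_h)$, splitting through $\tau$, and killing the cross-term via orthogonality against $\sigma_\nu - \tau \in \mathcal{V}$) yields
\begin{equation*}
    \|\varepsilon^{-1/2}\sigma_\nu + \varepsilon^{1/2}\nabla u_h\| \;\le\; \|\varepsilon^{-1/2}\tau + \varepsilon^{1/2}\nabla u_h\| \quad \forall \tau \in \mathcal{V}.
\end{equation*}

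The key observation is then that the explicit recovery $\hat{\sigma}_{RT_0}(u_h)$ built in (\ref{recover4}) lies in $RT_0$, and $RT_0 \subseteq \mathcal{V}$ in both admissible choices $\mathcal{V}=RT_0$ and $\mathcal{V}=BDM_1$ (since $P_0({K})^d + \mathbf{x} P_0(K) \subset P_1(K)^d$ on each element, and the normal-trace continuity required by $H(\mathrm{div})$ is the same in both spaces). Hence we are entitled to pick $\tau = \hat{\sigma}_{RT_0}(u_h)$ in the best-approximation inequality above, obtaining
\begin{equation*}
    \|\varepsilon^{-1/2}\sigma_\nu + \varepsilon^{1/2}\nabla u_h\| \;\le\; \|\varepsilon^{-1/2}\hat{\sigma}_{RT_0}(u_h) + \varepsilon^{1/2}\nabla u_h\|.
\end{equation*}

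Finally, I would invoke Lemma~\ref{recover6}, which has already bounded the right-hand side by $|||u - u_h|||_* + \mathrm{osc}_h$, and the estimate (\ref{yingli5.1}) follows immediately. I do not anticipate a real obstacle: the only non-routine point is checking the inclusion $RT_0 \subseteq \mathcal{V}$ so that $\hat{\sigma}_{RT_0}(u_h)$ is actually admissible as a test field for the minimization, and verifying that the $\varepsilon$-weighting in the projection (\ref{recover2}) is exactly the one that converts best approximation in the plain $L^2$-norm of $\varepsilon^{-1}\sigma_\nu + \nabla u_h$ into best approximation in the target norm $\|\varepsilon^{-1/2}(\cdot) + \varepsilon^{1/2}\nabla u_h\|$. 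Both are quick verifications, so the bulk of the work is already contained in Lemma~\ref{recover6}.
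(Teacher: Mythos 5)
Your proposal is correct and follows essentially the same route as the paper: the paper likewise derives the best-approximation property $||\varepsilon^{-1/2}\sigma_{\nu}+\varepsilon^{1/2}\nabla u_{h}||=\min_{\tau\in\mathcal{V}}||\varepsilon^{-1/2}\tau+\varepsilon^{1/2}\nabla u_{h}||$ from the orthogonality in (\ref{recover2}), then uses $RT_{0}\subset BDM_{1}$ to insert $\tau=\hat{\sigma}_{RT_{0}}(u_{h})$ and concludes via Lemma~\ref{recover6}.
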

\begin{proof}
For all $\tau\in\mathcal{V}$, (\ref{recover2}) implies
\begin{equation*}
(\varepsilon^{-1/2}\sigma_{\nu}+\varepsilon^{1/2}\nabla u_{h},\varepsilon^{-1/2}\tau)=0,
\end{equation*}
which results in
\begin{align*}
    & ||\varepsilon^{-1/2}\sigma_{\nu}+\varepsilon^{1/2}\nabla u_{h}||^{2} = (\varepsilon^{-\frac{1}{2}}\sigma_{\nu}+
    \varepsilon^{\frac{1}{2}}\nabla u_{h},\varepsilon^{-\frac{1}{2}}(\sigma_{\nu}-\tau)+
    \varepsilon^{-\frac{1}{2}}\tau+\varepsilon^{\frac{1}{2}}\nabla u_{h})\\
    = & (\varepsilon^{-1/2}\sigma_{\nu}+\varepsilon^{1/2}\nabla u_{h},\varepsilon^{-1/2}\tau+\varepsilon^{1/2}\nabla u_{h})
    \leq ||\varepsilon^{-1/2}\sigma_{\nu}+\varepsilon^{1/2}\nabla u_{h}||||\varepsilon^{-1/2}\tau+\varepsilon^{1/2}\nabla u_{h}||.
\end{align*}
Dividing by $||\varepsilon^{-1/2}\sigma_{\nu}+\varepsilon^{1/2}\nabla u_{h}||$, we get
\begin{equation*}
||\varepsilon^{-1/2}\sigma_{\nu}+\varepsilon^{1/2}\nabla u_{h}||\leq ||\varepsilon^{-1/2}\tau+\varepsilon^{1/2}\nabla u_{h}||
\end{equation*}
for all $\tau\in\mathcal{V}$, which implies
\begin{equation}\label{recover14}
    ||\varepsilon^{-1/2}\sigma_{\nu}+\varepsilon^{1/2}\nabla u_{h}||= \min_{\tau\in\mathcal{V}}||\varepsilon^{-1/2}\tau+\varepsilon^{1/2}\nabla u_{h}||.
\end{equation}
The assertion (\ref{yingli5.1}) follows from the fact that $RT_{0}\subset BDM_{1}$, (\ref{recover14}), and Lemma \ref{recover6}.
\end{proof}

%%%%%%%%%%%%%%%%%%%%
% Lemma
%%%%%%%%%%%%%%%%%%%%
\begin{lemma}\label{aaa5}
Under the assumption of Lemma \ref{recover6}, if $\sigma_{\nu}$ is the implicit recovery flux obtained by (\ref{recover2}), then it holds
\begin{equation}\label{aaa6}
    \Big( \sum_{e\subset\Gamma_{N}}\alpha_{e}^{2} ||(\sigma_{\nu}+
    \varepsilon\nabla u_{h}) \cdot{\bf n}||_{e}^{2} \Big)^{1/2} \preceq ||\varepsilon^{-1/2}\sigma_{\nu} + \varepsilon^{1/2}\nabla u_{h}||.
\end{equation}
\end{lemma}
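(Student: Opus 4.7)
The plan is to estimate the boundary term edge by edge using a polynomial trace inequality, then sum up against the full $L^2$ norm on $\Omega$. The key observation is that on the element $K_e$ adjacent to any boundary face $e \subset \Gamma_N$, both $\sigma_\nu|_{K_e}$ (which lies in $RT_0$ or $BDM_1$) and $\varepsilon \nabla u_h|_{K_e}$ (a constant vector, since $u_h \in V_h$ is piecewise $P_1$) are polynomial, so their normal trace on $e$ is a polynomial of bounded degree.

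First I would fix $e \subset \Gamma_N$ and let $K_e$ denote the unique element of $\mathcal{T}_h$ containing $e$. By the standard polynomial trace/scaling inequality on shape-regular elements, for any polynomial $q$ of bounded degree on $K_e$,
\begin{equation*}
\|q\|_e^2 \preceq h_e^{-1} \|q\|_{K_e}^2.
\end{equation*}
Applying this to $q = (\sigma_\nu + \varepsilon \nabla u_h)\cdot \mathbf{n}$ and using $|\mathbf{n}|=1$, I obtain
\begin{equation*}
\|(\sigma_\nu + \varepsilon \nabla u_h)\cdot \mathbf{n}\|_e^2 \preceq h_e^{-1} \|\sigma_\nu + \varepsilon \nabla u_h\|_{K_e}^2.
\end{equation*}

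Next I would exploit the definition of the weight. Since $\alpha_e = \min\{h_e^{1/2}\varepsilon^{-1/2}, \varepsilon^{-1/4}\beta^{-1/4}, 1\}$, in particular $\alpha_e \leq h_e^{1/2}\varepsilon^{-1/2}$, so $\alpha_e^2 h_e^{-1} \leq \varepsilon^{-1}$. Combining this with the previous display gives
\begin{equation*}
\alpha_e^2 \|(\sigma_\nu + \varepsilon \nabla u_h)\cdot \mathbf{n}\|_e^2 \preceq \varepsilon^{-1} \|\sigma_\nu + \varepsilon \nabla u_h\|_{K_e}^2 = \|\varepsilon^{-1/2}\sigma_\nu + \varepsilon^{1/2}\nabla u_h\|_{K_e}^2.
\end{equation*}

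Finally I would sum over all $e \subset \Gamma_N$. Because each element $K_e$ has at most a bounded number of faces on $\Gamma_N$ (in fact at most $d$), the map $e \mapsto K_e$ has finite overlap and the sum of the right-hand sides is bounded by $\|\varepsilon^{-1/2}\sigma_\nu + \varepsilon^{1/2}\nabla u_h\|^2$, yielding \eqref{aaa6}. The argument is quite short; the only point that requires a bit of care is confirming that $(\sigma_\nu + \varepsilon\nabla u_h)\cdot\mathbf{n}$ really is a polynomial of degree controlled by the shape-regularity constant so that the polynomial trace inequality applies with constant independent of $h_e$ and $\varepsilon$—this is clear from the definitions of $RT_0$, $BDM_1$ and $V_h$, so no serious obstacle is expected.
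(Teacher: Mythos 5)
Your argument is correct and is essentially identical to the paper's proof: both pass from the face norm to the element norm via a scaled (polynomial) trace/inverse inequality $\|q\|_e \preceq h_K^{-1/2}\|q\|_K$, then absorb the factor using $\alpha_e \leq h_e^{1/2}\varepsilon^{-1/2}$, and sum over the Neumann faces. Your explicit remarks on the polynomial nature of the normal trace and the finite overlap of the map $e\mapsto K_e$ only spell out details the paper leaves implicit.
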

\begin{proof}
Using trace theorem, inverse estimate, shape regularity of element, and the
fact $\alpha_{e}\leq h_{e}^{1/2}/\sqrt{\varepsilon}$, we have for $e\subset\Gamma_{N} \cap \partial K$
\begin{equation*}
    \alpha_{e} ||(\sigma_{\nu}+\varepsilon\nabla u_{h})\cdot{\bf n}||_{e}
    \preceq \alpha_{e}h_{K}^{-1/2} ||\sigma_{\nu}+\varepsilon\nabla u_{h}||_{K}
    \preceq ||\varepsilon^{-1/2}\sigma_{\nu}+\varepsilon^{1/2}\nabla u_{h}||_{K}.
\end{equation*}
Summing the above inequality over all $e\subset\Gamma_{N}$, we obtain the desired estimate (\ref{aaa6}).
\end{proof}

Moreover, we have the following estimate.
%%%%%%%%%%%%%%%%%%%%
% Lemma
%%%%%%%%%%%%%%%%%%%%
\begin{lemma}\label{recover16}
Let $\sigma_{h}$ be the flux recovery obtained by the implicit approximation (\ref{recover2}) or the
explicit approximation (\ref{recover4}). Then it holds
\begin{equation}\label{recover17}
    \Big(\sum_{K\in\mathcal{T}_{h}}\alpha_{K}^{2}||\tilde{R}_{K}||_{K}^{2} \Big)^{1/2} \preceq
     \Big(\sum_{K\in\mathcal{T}_{h}}\alpha_{K}^{2}||R_{K}||_{K}^{2} \Big)^{1/2} + ||\varepsilon^{1/2}\nabla u_{h}+\varepsilon^{-1/2}\sigma_{h}||.
\end{equation}
\end{lemma}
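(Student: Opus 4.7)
The plan is to subtract the two cell residuals to produce a telescoping identity and then exploit that $\sigma_h + \varepsilon\nabla u_h$ is elementwise polynomial so that an inverse inequality is available.

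First I would write out the difference of the two residuals directly from the definitions in \eqref{Eq:CellResidl}:
\begin{equation*}
    \tilde R_K - R_K = -\nabla\cdot\sigma_h - \varepsilon\Delta u_h = -\nabla\cdot\bigl(\sigma_h + \varepsilon\nabla u_h\bigr)\quad\text{on each }K,
\end{equation*}
which is well defined since, for either choice of recovery, $\sigma_h$ is elementwise polynomial and $u_h$ is piecewise linear. The triangle inequality gives
\begin{equation*}
    \|\tilde R_K\|_K \leq \|R_K\|_K + \bigl\|\nabla\cdot(\sigma_h+\varepsilon\nabla u_h)\bigr\|_K,
\end{equation*}
so after squaring, multiplying by $\alpha_K^{2}$, and summing, the problem reduces to controlling $\sum_K\alpha_K^{2}\,\|\nabla\cdot(\sigma_h+\varepsilon\nabla u_h)\|_K^{2}$ by $\|\varepsilon^{1/2}\nabla u_h+\varepsilon^{-1/2}\sigma_h\|^{2}$.

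The key observation is that on each $K$ the vector field $w:=\sigma_h+\varepsilon\nabla u_h$ is a polynomial of bounded degree: $\sigma_h|_K$ belongs to $P_0(K)^d+\mathbf{x}P_0(K)$ (for $RT_0$, explicit or implicit) or $P_1(K)^d$ (for $BDM_1$), and $\varepsilon\nabla u_h|_K$ is constant. The standard inverse inequality on the fixed-degree polynomial $w$ therefore yields
\begin{equation*}
    \|\nabla\cdot w\|_K \preceq h_K^{-1}\|w\|_K.
\end{equation*}
Multiplying by $\alpha_K$ and using the elementary bound $\alpha_K = \min\{h_K\varepsilon^{-1/2},\beta^{-1/2},h_K^{1/2}\}\leq h_K\varepsilon^{-1/2}$, I get
\begin{equation*}
    \alpha_K\|\nabla\cdot w\|_K \preceq \varepsilon^{-1/2}\|w\|_K = \bigl\|\varepsilon^{-1/2}\sigma_h+\varepsilon^{1/2}\nabla u_h\bigr\|_K.
\end{equation*}

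Squaring and summing over $K\in\mathcal{T}_h$, then combining with the triangle inequality above, produces the desired bound \eqref{recover17}. The only point that requires even slight care is making sure that the argument works uniformly for all three recovery procedures; this is automatic because in every case $w|_K$ lies in a fixed finite-dimensional polynomial space on the reference element, so the inverse estimate has a shape-regularity-dependent constant only. I do not anticipate any real obstacle — the statement is essentially a consequence of the definition of $\tilde R_K$ and a single inverse inequality, with the weights $\alpha_K$ tuned precisely so that the factor $h_K^{-1}$ in the inverse estimate is absorbed by $\varepsilon^{-1/2}$.
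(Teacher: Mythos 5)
Your proposal is correct and follows essentially the same route as the paper's own proof: both rewrite $\tilde R_K - R_K$ as $-\nabla\cdot(\sigma_h+\varepsilon\nabla u_h)$, apply the triangle inequality and an elementwise inverse estimate to the polynomial field $\sigma_h+\varepsilon\nabla u_h$, and absorb the factor $h_K^{-1}$ using $\alpha_K\leq h_K\varepsilon^{-1/2}$. No gaps.
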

\begin{proof}
For each $K\in\mathcal{T}_{h}$, it follows from triangle inequality and inverse estimate that
\begin{equation*}
\begin{array}{lll}
||\tilde{R}_{K}||_{K}&\leq&||R_{K}||_{K}+||\varepsilon\triangle u_{h}+\nabla\cdot\sigma_{h}||_{K}\\
&\preceq&||R_{K}||_{K}+h_{K}^{-1}\varepsilon^{1/2}||\varepsilon^{1/2}\nabla u_{h}+\varepsilon^{-1/2}\sigma_{h}||_{K}.
\end{array}
\end{equation*}
We get from the fact $\alpha_{K}\leq h_{K}/\sqrt{\varepsilon}$ that
\begin{equation*}
\alpha_{K}||\tilde{R}_{K}||_{K}\preceq\alpha_{K}||R_{K}||_{K}+||\varepsilon^{1/2}\nabla u_{h}+
\varepsilon^{-1/2}\sigma_{h}||_{K}.
\end{equation*}
Summing up the above inequality over all $K\in\mathcal{T}_{h}$, we obtain
\begin{equation*}
\displaystyle\sum_{K\in\mathcal{T}_{h}}\alpha_{K}^{2}||\tilde{R}_{K}||_{K}^{2}\preceq
\sum_{K\in\mathcal{T}_{h}}\alpha_{K}^{2}||R_{K}||_{K}^{2}+||\varepsilon^{1/2}\nabla u_{h}+
\varepsilon^{-1/2}\sigma_{h}||^{2},
\end{equation*}
which results in the desired estimate (\ref{recover17}).
\end{proof}

Collecting Lemma \ref{recover15}-\ref{recover16}, we obtain the global lower bound estimate.
%%%%%%%%%%%%%%%%%%%%
% Theorem
%%%%%%%%%%%%%%%%%%%%
\begin{theorem}\label{add4}
Let $u$ and $u_{h}$ be the solutions to (\ref{PDE3}) and (\ref{PDE6}), respectively. Let $\Phi$ be defined in \eqref{Eq:Phi}.
If $\sigma_{h}$ is the recovery flux obtained by the explicit approximation (\ref{recover4}), i.e.,
$\sigma_{h}=\hat{\sigma}_{RT_{0}}(u_{h})$, then
\begin{equation} \label{recover18}
    \Phi + \Big(\sum_{e\subset\Gamma_{N}} \alpha_{e}^{2} ||g-\varepsilon\nabla u_{h}\cdot{\bf n}||_{e}^{2} \Big)^{1/2}
    \preceq |||u-u_{h}|||_{*} + {\rm osc}_{h}.
\end{equation}
If $\sigma_{h}$ is the recovery flux obtained by the implicit approximation (\ref{recover2}), i.e., $\sigma_{h}=\sigma_{\nu}$,
then
\begin{equation}
    \label{aaa7}
    \Phi + \Big(\sum_{e\subset\Gamma_{N}}\alpha_{e}^{2}(||g-\varepsilon\nabla u_{h}\cdot{\bf n}||_{e}^{2} +
    ||(\sigma_{h} + \varepsilon\nabla u_{h})\cdot{\bf n}||_{e}^{2}) \Big)^{\frac{1}{2}}
    \preceq  |||u-u_{h}|||_{*}+{\rm osc}_{h}.
\end{equation}
\end{theorem}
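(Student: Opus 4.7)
The plan is to assemble the already-proved Lemmas \ref{recover15}--\ref{recover16} into the stated global lower bounds. Recalling that
\[
\Phi^2 = \sum_{K\in\mathcal{T}_h}\alpha_K^2\bigl(\|R_K\|_K^2 + \|\tilde{R}_K\|_K^2\bigr) + \|\varepsilon^{1/2}\nabla u_h + \varepsilon^{-1/2}\sigma_h\|^2,
\]
I would split the left-hand side of \eqref{recover18} (resp.\ \eqref{aaa7}) into four contributions: (i) the $R_K$-sum, (ii) the $\tilde{R}_K$-sum, (iii) the flux residual $\|\varepsilon^{1/2}\nabla u_h + \varepsilon^{-1/2}\sigma_h\|$, and (iv) the Neumann boundary terms, and bound each in turn by $|||u-u_h|||_* + {\rm osc}_h$.

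First, Lemma \ref{recover15} simultaneously controls $\sum_K \alpha_K^2 \|R_K\|_K^2$ and $\sum_K\sum_{e\subset\partial K}\alpha_e^2 \|R_e\|_e^2$. Since for $e\subset\Gamma_N$ the edge/face residual $R_e$ equals $g+\boldsymbol{\tau}\cdot\mathbf{n}_e = g-\varepsilon\nabla u_h\cdot\mathbf{n}$, this single bound absorbs the Neumann contribution present in both \eqref{recover18} and \eqref{aaa7}. Next, the flux residual $\|\varepsilon^{1/2}\nabla u_h+\varepsilon^{-1/2}\sigma_h\|$ is bounded via Lemma \ref{recover6} when $\sigma_h = \hat{\sigma}_{RT_0}(u_h)$ and via Lemma \ref{yingli5.2} when $\sigma_h = \sigma_\nu$. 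With these two estimates in hand, Lemma \ref{recover16} dominates $\bigl(\sum_K\alpha_K^2\|\tilde{R}_K\|_K^2\bigr)^{1/2}$ by the sum of the $R_K$-quantity and the flux residual, both of which have just been controlled.

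For the implicit estimate \eqref{aaa7}, the only remaining contribution is the extra jump term $\sum_{e\subset\Gamma_N}\alpha_e^2\|(\sigma_\nu+\varepsilon\nabla u_h)\cdot\mathbf{n}\|_e^2$, which Lemma \ref{aaa5} bounds in terms of $\|\varepsilon^{-1/2}\sigma_\nu + \varepsilon^{1/2}\nabla u_h\|$; this quantity has already been dominated in the previous step via Lemma \ref{yingli5.2}. Combining all four contributions by the triangle inequality and the elementary bound $(a+b)^{1/2}\le a^{1/2}+b^{1/2}$ yields both \eqref{recover18} and \eqref{aaa7}.

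The theorem is a packaging statement rather than a new estimate, so I do not anticipate a genuine obstacle. The only point requiring mild care is to notice that in the explicit case the construction of $\hat{\sigma}_{RT_0}(u_h)$ forces $\hat{\sigma}_{RT_0}(u_h)\cdot\mathbf{n} = -\varepsilon\nabla u_h\cdot\mathbf{n}$ on $\Gamma_N$ (as already exploited in the proof of Theorem \ref{recover3}), which is exactly why the additional Neumann jump term of \eqref{aaa7} is absent in \eqref{recover18}. Beyond that, the proof is essentially bookkeeping: chain the estimates so that every occurrence of $\|\varepsilon^{-1/2}\sigma_h + \varepsilon^{1/2}\nabla u_h\|$ on the right-hand side is ultimately replaced by $|||u-u_h|||_* + {\rm osc}_h$ through Lemma \ref{recover6} or Lemma \ref{yingli5.2}.
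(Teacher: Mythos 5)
Your proposal is correct and follows essentially the same route as the paper's proof: Lemma \ref{recover15} absorbs the $R_K$-sum and the Neumann term (via $R_e = g-\varepsilon\nabla u_h\cdot{\bf n}$ on $\Gamma_N$), Lemmas \ref{recover6}/\ref{yingli5.2} control the flux residual, Lemma \ref{recover16} then handles the $\tilde R_K$-sum, and Lemma \ref{aaa5} supplies the extra Neumann jump term in the implicit case. The only cosmetic difference is that the paper chains these bounds in a single displayed inequality rather than itemizing four contributions, so no further comment is needed.
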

\begin{proof}
It follows from Lemmas \ref{recover6}-\ref{yingli5.2} that
\begin{equation}\label{recover19}
||\varepsilon^{1/2}\nabla u_{h}+\varepsilon^{-1/2}\sigma_{h}||^{2} \preceq |||u-u_{h}|||_{*}^{2} + {\rm osc}_{h}.
\end{equation}
If $\sigma_{h}=\hat{\sigma}_{RT_{0}}(u_{h})$, we get from Lemma~\ref{recover15} and Lemma~\ref{recover16} that
\begin{align}
    \nonumber
    & \sum_{K\in\mathcal{T}_{h}}\alpha_{K}^{2}(||R_{K}||_{K}^{2} + ||\tilde{R}_{K}||_{K}^{2})+
    \sum_{e\subset\Gamma_{N}}\alpha_{e}^{2}||g-\varepsilon\nabla u_{h}\cdot{\bf n}||_{e}^{2} \\
    \nonumber
    \leq & \sum_{K\in\mathcal{T}_{h}}(\alpha_{K}^{2}||R_{K}||_{K}^{2} +
    \sum_{e\subset\partial K}\alpha_{e}^{2} ||R_{e}||_{e}^{2}) + \sum_{K\in\mathcal{T}_{h}}\alpha_{K}^{2} ||\tilde{R}_{K}||_{K}^{2} \\
    \nonumber
    \preceq & \sum_{K\in\mathcal{T}_{h}}(\alpha_{K}^{2}||R_{K}||_{K}^{2} +
    \sum_{e\subset\partial K}\alpha_{e}^{2}||R_{e}||_{e}^{2}) + ||\varepsilon^{1/2}\nabla u_{h} + \varepsilon^{-1/2}\sigma_{h}||^{2} \\
    \label{recover20}
    \preceq & ~|||u-u_{h}|||_{*}^{2}+{\rm osc}_{h}^{2} + ||\varepsilon^{1/2}\nabla u_{h}+\varepsilon^{-1/2}\sigma_{h}||^{2}.
\end{align}

The assertion (\ref{recover18}) follows from a combination of (\ref{recover19}) and (\ref{recover20}).
If $\sigma_{h}=\sigma_{\nu}$, then, similarly, we have from Lemma~\ref{recover15} and Lemmas~\ref{aaa5}-\ref{recover16} that
\begin{align*}
    & \sum_{K\in\mathcal{T}_{h}}\alpha_{K}^{2}(||R_{K}||_{K}^{2} + ||\tilde{R}_{K}||_{K}^{2}) +
    \sum_{e\subset\Gamma_{N}}\alpha_{e}^{2} (||g-\varepsilon\nabla u_{h}\cdot{\bf n}||_{e}^{2} +
    ||(\sigma_{h}+\varepsilon\nabla u_{h})\cdot{\bf n}||_{e}^{2}) \\
    \preceq & ~ |||u-u_{h}|||_{*}^{2} + {\rm osc}_{h}^{2}+||\varepsilon^{1/2}\nabla u_{h}+\varepsilon^{-1/2}\sigma_{h}||^{2}.
\end{align*}
The estimate (\ref{aaa7}) follows from the above inequality and (\ref{recover19}).
\end{proof}

%%%%%%%%%%%%%%%%%%%%%%%%%%%%%%%%%%%%%%%%%%%%%%%%%%%%%%%%%%%
%%%%%%%%%%%%%%%%%%%%%%%%%%%%%%%%%%%%%%%%%%%%%%%%%%%%%%%%%%%
\section{A stabilization $H({\rm div})$ recovery}\label{HdivRec}
Let $u_{h}\in V_{h}$ be the approximation of the solution $u$ to (\ref{PDE1}). A stabilization $H({\rm div})$
recovery procedure is to find $\sigma_{\mathcal{T}}\in\mathcal{V}$ such that
\begin{align}
    \nonumber
    & (\varepsilon^{-1}\sigma_{\mathcal{T}},\tau_{\nu}) +
    \sum_{K\in\mathcal{T}_{h}}\gamma_{K}(\nabla\cdot\sigma_{\mathcal{T}},\nabla\cdot\tau_{\nu})_{K} \\
    \label{recover21}
    = & -(\nabla u_{h},\tau_{\nu}) +
    \sum_{K\in\mathcal{T}_{h}}\gamma_{K} (f-{\bf a}\cdot\nabla u_{h}-bu_{h}, \nabla\cdot\tau_{\nu})_{K} \quad \forall \tau_{\nu} \in\mathcal{V},
\end{align}
where $\gamma_{K}$ is a stabilization parameter to be determined in below. Recalling the exact
flux $\sigma=-\varepsilon\nabla u$, define the approximation error of the flux recovery by
\begin{equation*}
||\sigma-\sigma_{\mathcal{T}}||_{B,\Omega}^{2} :=(\varepsilon^{-1}(\sigma-\sigma_{\mathcal{T}}),\sigma-\sigma_{\mathcal{T}})+
\displaystyle\sum_{K\in\mathcal{T}_{h}}
\gamma_{K}(\nabla\cdot(\sigma-\sigma_{\mathcal{T}}),\nabla\cdot(\sigma-\sigma_{\mathcal{T}}))_{K}.
\end{equation*}

%%%%%%%%%%%%%%%%%%%%
% Theorem
%%%%%%%%%%%%%%%%%%%%
\begin{theorem}
The following {\em a priori} error bound for the approximation error of the $H({\rm div})$ recovery flux holds
\begin{equation}\label{recover22}
||\sigma-\sigma_{\mathcal{T}}||_{B,\Omega}\preceq\inf_{\tau_{\nu}\in\mathcal{V}}||\sigma-\tau_{\nu}||_{B,\Omega}+
||u-u_{h}||_\varepsilon.
\end{equation}
\end{theorem}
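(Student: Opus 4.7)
\medskip

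The plan is to establish a Céa-type bound for the stabilized recovery problem. Observe that the left-hand side of \eqref{recover21} defines a bilinear form
\[
a(\rho,\tau_\nu):=(\varepsilon^{-1}\rho,\tau_\nu)+\sum_{K\in\mathcal{T}_h}\gamma_K(\nabla\cdot\rho,\nabla\cdot\tau_\nu)_K
\]
on $H(\mathrm{div};\Omega)\times H(\mathrm{div};\Omega)$, which is coercive and bounded with respect to $\|\cdot\|_{B,\Omega}$ by Cauchy--Schwarz (so $a(\rho,\rho)=\|\rho\|_{B,\Omega}^2$ and $|a(\rho,\tau)|\le\|\rho\|_{B,\Omega}\|\tau\|_{B,\Omega}$). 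First I would record the ``continuous counterpart'' of \eqref{recover21}: since $\sigma=-\varepsilon\nabla u$ and $-\nabla\cdot\sigma=\varepsilon\Delta u=\mathbf{a}\cdot\nabla u+bu-f$ on each $K$, the exact flux satisfies
\[
a(\sigma,\tau_\nu)=-(\nabla u,\tau_\nu)+\sum_{K\in\mathcal{T}_h}\gamma_K(f-\mathbf{a}\cdot\nabla u-bu,\nabla\cdot\tau_\nu)_K\quad\forall\tau_\nu\in\mathcal{V}.
\]
Subtracting this from \eqref{recover21} yields the error relation
\[
a(\sigma-\sigma_{\mathcal{T}},\tau_\nu)=(\nabla(u-u_h),\tau_\nu)+\sum_{K\in\mathcal{T}_h}\gamma_K(\mathbf{a}\cdot\nabla(u-u_h)+b(u-u_h),\nabla\cdot\tau_\nu)_K
\]
for every $\tau_\nu\in\mathcal{V}$; this is the ``Galerkin orthogonality with consistency error'' that drives the proof.

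Next I would split the error in the standard way: for an arbitrary $\tau_\nu\in\mathcal{V}$, write $\sigma-\sigma_{\mathcal{T}}=(\sigma-\tau_\nu)+(\tau_\nu-\sigma_{\mathcal{T}})$ and expand
\[
\|\sigma-\sigma_{\mathcal{T}}\|_{B,\Omega}^2=a(\sigma-\sigma_{\mathcal{T}},\sigma-\tau_\nu)+a(\sigma-\sigma_{\mathcal{T}},\tau_\nu-\sigma_{\mathcal{T}}).
\]
The first piece is handled by continuity: $a(\sigma-\sigma_{\mathcal{T}},\sigma-\tau_\nu)\le\|\sigma-\sigma_{\mathcal{T}}\|_{B,\Omega}\|\sigma-\tau_\nu\|_{B,\Omega}$. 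For the second, since $\tau_\nu-\sigma_{\mathcal{T}}\in\mathcal{V}$ I plug it into the error relation above and bound the two resulting terms by Cauchy--Schwarz:
\[
|(\nabla(u-u_h),\tau_\nu-\sigma_{\mathcal{T}})|\le\|u-u_h\|_\varepsilon\,\|\tau_\nu-\sigma_{\mathcal{T}}\|_{B,\Omega},
\]
and, using $\|\tau_\nu-\sigma_{\mathcal{T}}\|_{B,\Omega}$ as a bound for the $\gamma_K$-weighted divergence norm of $\tau_\nu-\sigma_{\mathcal{T}}$,
\[
\sum_K\gamma_K(\mathbf{a}\cdot\nabla(u-u_h)+b(u-u_h),\nabla\cdot(\tau_\nu-\sigma_{\mathcal{T}}))_K\le\Bigl(\sum_K\gamma_K\|\mathbf{a}\cdot\nabla(u-u_h)+b(u-u_h)\|_K^2\Bigr)^{1/2}\|\tau_\nu-\sigma_{\mathcal{T}}\|_{B,\Omega}.
\]

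The main technical point, and the step I expect to be the obstacle, is calibrating the stabilization parameter $\gamma_K$ so that the convection/reaction consistency error above is controlled by $\|u-u_h\|_\varepsilon^2$. Using $\|\mathbf{a}\|_{L^\infty}=\mathcal{O}(1)$ and the assumption $\|b\|_{L^\infty}\le c_b\beta$ from \eqref{Assumption}, I would impose a condition of the form $\gamma_K\preceq\min\{\varepsilon/\|\mathbf{a}\|_{L^\infty(K)}^2,\,1/(c_b^2\beta)\}$; then
\[
\gamma_K\|\mathbf{a}\cdot\nabla(u-u_h)\|_K^2\preceq\varepsilon\|\nabla(u-u_h)\|_K^2,\quad\gamma_K\|b(u-u_h)\|_K^2\preceq\beta\|u-u_h\|_K^2,
\]
so summing over $K$ gives the desired $\|u-u_h\|_\varepsilon^2$ bound. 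This is presumably the ``$\gamma_K$ to be determined in below'' mentioned after \eqref{recover21}.

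Finally I would combine these bounds, apply the triangle inequality $\|\tau_\nu-\sigma_{\mathcal{T}}\|_{B,\Omega}\le\|\sigma-\tau_\nu\|_{B,\Omega}+\|\sigma-\sigma_{\mathcal{T}}\|_{B,\Omega}$, and absorb the $\|\sigma-\sigma_{\mathcal{T}}\|_{B,\Omega}$ term on the right-hand side via Young's inequality to obtain
\[
\|\sigma-\sigma_{\mathcal{T}}\|_{B,\Omega}\preceq\|\sigma-\tau_\nu\|_{B,\Omega}+\|u-u_h\|_\varepsilon.
\]
Since $\tau_\nu\in\mathcal{V}$ was arbitrary, taking the infimum yields \eqref{recover22}.
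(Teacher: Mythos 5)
Your argument is correct and follows the same Céa-type skeleton as the paper: write the consistency relation satisfied by the exact flux, subtract to get the error equation, split $\sigma-\sigma_{\mathcal{T}}=(\sigma-\tau_{\nu})+(\tau_{\nu}-\sigma_{\mathcal{T}})$, estimate by Cauchy--Schwarz, and absorb. (Minor slip: the right-hand side of your error relation should carry minus signs, as in the paper's \eqref{recover24}; this is immaterial since you only use magnitudes.) The one genuine divergence is in how the convection/reaction consistency term is controlled, and it leads to a different admissible range for $\gamma_K$. You pair $\nabla\cdot(\tau_{\nu}-\sigma_{\mathcal{T}})$ with the $\gamma_K$-weighted divergence part of $\|\cdot\|_{B,\Omega}$, which forces $\gamma_K\preceq\min\{\varepsilon/\|\mathbf{a}\|_{L^{\infty}(K)}^{2},\,1/(c_b^{2}\beta)\}$. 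The paper instead applies the inverse estimate $\|\nabla\cdot(\tau_{\nu}-\sigma_{\mathcal{T}})\|_{K}\preceq h_K^{-1}\|\tau_{\nu}-\sigma_{\mathcal{T}}\|_{K}$ (legitimate since $\tau_{\nu}-\sigma_{\mathcal{T}}\in\mathcal{V}$ is piecewise polynomial) and pairs the result with the $\varepsilon^{-1}$-weighted $L^{2}$ part of the $B$-norm, which only requires $\gamma_K\leq h_K\min\{1/\|\mathbf{a}\|_{L^{\infty}(K)},\,1/\sqrt{\beta\varepsilon}\}$. In the convection-dominated regime $\varepsilon\ll h_K$ your condition is far more restrictive ($\gamma_K=O(\varepsilon)$ versus $\gamma_K=O(h_K)$), which would essentially switch off the divergence stabilization; the paper's larger range is also the one actually used later in Lemma \ref{supplement}, where $\gamma_K\leq h_K\min\{1/\|\mathbf{a}\|_{L^{\infty}(K)},1/\sqrt{\beta\varepsilon},\alpha_K/(8C^{2}\sqrt{\varepsilon})\}$ is imposed, so your proof as written would not cover that choice. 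Since the theorem leaves $\gamma_K$ ``to be determined,'' your proof is a valid proof of the stated bound under your own calibration, but to integrate with the rest of the paper you should adopt the inverse-estimate route.
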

\begin{proof}
Note that the exact flux $\sigma$ satisfies, for all $\tau\in H({\rm div};\Omega)$,
\begin{equation*}
    (\varepsilon^{-1}\sigma,\tau) + \displaystyle \sum_{K\in\mathcal{T}_{h}} \gamma_{K} (\nabla\cdot\sigma,\nabla\cdot\tau)_{K}
    = -(\nabla u,\tau)+\displaystyle\sum_{K\in\mathcal{T}_{h}}\gamma_{K}(f-{\bf a}\cdot\nabla u - bu,\nabla\cdot\tau)_{K}.
\end{equation*}
For all $\tau_{\nu}\in\mathcal{V}$, This identity and (\ref{recover21}) give the error equation
\begin{align}
    \nonumber
    & (\varepsilon^{-1}(\sigma-\sigma_{\mathcal{T}}),\tau_{\nu}) +
    \sum_{K\in\mathcal{T}_{h}} \gamma_{K} (\nabla\cdot(\sigma-\sigma_{\mathcal{T}}), \nabla\cdot\tau_{\nu})_{K} \\
    \label{recover24}
    = & -(\nabla(u-u_{h}),\tau_{\nu}) - \sum_{K\in\mathcal{T}_{h}}\gamma_{K}({\bf a}\cdot\nabla(u-u_{h})+b(u-u_{h}), \nabla\cdot\tau_{\nu})_{K}
\end{align}
which implies
\begin{align*}
    ||\sigma- & \sigma_{\mathcal{T}}||_{B,\Omega}^{2} =~
    (\varepsilon^{-1}(\sigma-\sigma_{\mathcal{T}}),\sigma-\tau_{\nu}) +
    \sum_{K\in\mathcal{T}_{h}}\gamma_{K} (\nabla\cdot(\sigma-\sigma_{\mathcal{T}}), \nabla\cdot(\sigma-\tau_{\nu}))_{K} \\
    & -(\nabla(u-u_{h}),\tau_{\nu}-\sigma_{\mathcal{T}}) -
    \sum_{K\in\mathcal{T}_{h}}\gamma_{K}({\bf a}\cdot\nabla(u-u_{h})+b(u-u_{h}), \nabla\cdot(\tau_{\nu}-\sigma_{\mathcal{T}}))_{K}.
\end{align*}
Using \eqref{Assumption} and Cauchy-Schwartz inequality, we arrive at
\begin{align*}
    ||\sigma- & \sigma_{\mathcal{T}}||_{B,\Omega}^{2} \leq ||\sigma-
    \sigma_{\mathcal{T}}||_{B,\Omega}||\sigma-\tau_{\nu}||_{B,\Omega} + ||u-u_{h}||_\varepsilon ||\tau_{\nu}-\sigma_{\mathcal{T}}||_{B,\Omega} \\
    & + \sum_{K\in\mathcal{T}_{h}}\gamma_{K} (||{\bf a}||_{L^{\infty}(K)}||\nabla(u-u_{h})||_{K}+
    c_{b}\beta||u-u_{h}||_{K}) ||\nabla\cdot(\tau_{\nu}-\sigma_{\mathcal{T}})||_{K}.
\end{align*}
Choose $\gamma_{K} \leq h_{K} \min \big\{\frac{1}{||{\bf a}||_{L^{\infty}(K)}},\frac{1}{\sqrt{\beta\varepsilon}}\big\}$ for
all $K\in\mathcal{T}_{h}$. Then, by inverse estimate, we have
\begin{align*}
    ||\sigma- & \sigma_{\mathcal{T}}||_{B,\Omega}^{2}
    \preceq ||\sigma-\sigma_{\mathcal{T}}||_{B,\Omega} ||\sigma-\tau_{\nu}||_{B,\Omega} +
    ||u-u_{h}||_\varepsilon ||\tau_{\nu}-\sigma_{\mathcal{T}}||_{B,\Omega} \\
    & \leq||\sigma-\sigma_{\mathcal{T}}||_{B,\Omega} ||\sigma-\tau_{\nu}||_{B,\Omega} +
    ||u-u_{h}||_\varepsilon (||\tau_{\nu}-\sigma||_{B,\Omega} + ||\sigma-\sigma_{\mathcal{T}}||_{B,\Omega}),
\end{align*}
which implies
\begin{equation*}
    ||\sigma-\sigma_{\mathcal{T}}||_{B,\Omega}\preceq||\sigma-\tau_{\nu}||_{B,\Omega}+
    ||u-u_{h}||_\varepsilon \quad \forall \tau_{\nu}\in\mathcal{V}.
\end{equation*}
The assertion (\ref{recover22}) follows immediately.
\end{proof}

%%%%%%%%%%%%%%%%%%%%
% Theorem
%%%%%%%%%%%%%%%%%%%%
\begin{theorem}\label{vvv}
Let $\sigma_{\mathcal{T}}$ be the $H({\rm div})$ recovery flux obtained from (\ref{recover21}),
and $u$ and $u_{h}$ be the solutions to (\ref{PDE3}) and (\ref{PDE6}), respectively. For
each $K\in\mathcal{T}_{h}$, let $\tilde{R}_{K} :=f-\nabla\cdot\sigma_{\mathcal{T}}-{\bf a}\cdot\nabla u_{h}-bu_{h}$.
Then the following reliable estimate holds
\begin{align}
    \nonumber
    |||u-u_{h}|||_{*}
    \preceq & \Big( \sum_{K\in\mathcal{T}_{h}}\alpha_{K}^{2}(||R_{K}||_{K}^{2} + ||\tilde{R}_{K}||_{K}^{2}) +
    ||\varepsilon^{1/2}\nabla u_{h}+\varepsilon^{-1/2}\sigma_{\mathcal{T}}||^{2}\Big)^{1/2} \\
    \label{recover26}
    & + \Big( \sum_{e\subset\Gamma_{N}} \alpha_{e}^{2}(||g-\varepsilon\nabla u_{h}\cdot{\bf n}||_{e}^{2} +
    ||(\sigma_{\mathcal{T}}+\varepsilon\nabla u_{h})\cdot{\bf n}||_{e}^{2}) \Big)^{1/2}.
\end{align}
\end{theorem}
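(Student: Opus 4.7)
The plan is to mimic the proof of Theorem~\ref{recover3}, specifically the derivation of the estimate (\ref{aaa4}) for the $L^{2}$-projection flux $\sigma_{\nu}$, with $\sigma_{\nu}$ replaced everywhere by $\sigma_{\mathcal{T}}$. The crucial enabling fact is that $\sigma_{\mathcal{T}}\in\mathcal{V}\subset H(\mathrm{div};\Omega)$, so $\nabla\cdot\sigma_{\mathcal{T}}$ and the Neumann trace $\sigma_{\mathcal{T}}\cdot{\bf n}|_{\Gamma_{N}}$ are meaningful and there is no jump of the normal component across interior faces. The stabilization terms in (\ref{recover21}) that distinguish $\sigma_{\mathcal{T}}$ from $\sigma_{\nu}$ play no role in the upper bound itself; they enter only through the definition of $\tilde{R}_{K}$.

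First, I would fix an arbitrary $v\in H_{D}^{1}(\Omega)$ and derive the residual identity exactly as in the proof of Theorem~\ref{recover3}: starting from (\ref{PDE3}), integrating $-\varepsilon(\nabla u_{h},\nabla v)$ by parts element-wise, and inserting $\pm\sigma_{\mathcal{T}}$, one obtains
\begin{equation*}
    B(u-u_{h},v)=\sum_{K\in\mathcal{T}_{h}}(\tilde{R}_{K},v)_{K}
    -(\varepsilon^{1/2}\nabla u_{h}+\varepsilon^{-1/2}\sigma_{\mathcal{T}},\varepsilon^{1/2}\nabla v)
    +\sum_{e\subset\Gamma_{N}}<g+\sigma_{\mathcal{T}}\cdot{\bf n},v>_{e}
\end{equation*}
with $\tilde{R}_{K}$ as defined in the statement. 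The $H(\mathrm{div})$-conformity of $\sigma_{\mathcal{T}}$ is what eliminates interior face jumps in this step.

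Second, I would decompose $v=I_{h}v+(v-I_{h}v)$ using the Cl\'ement interpolant and handle the two pieces separately. For $I_{h}v$, I would subtract (\ref{PDE6}) from (\ref{PDE3}) to get the Galerkin-type relation
\begin{equation*}
    B(u-u_{h},I_{h}v)=-\sum_{K\in\mathcal{T}_{h}}\delta_{K}(R_{K},{\bf a}\cdot\nabla(I_{h}v))_{K},
\end{equation*}
bounded by $\big(\sum_{K}\alpha_{K}^{2}\|R_{K}\|_{K}^{2}\big)^{1/2}|||v|||$ via (\ref{convection 2}) together with $\delta_{K}\|{\bf a}\|_{L^{\infty}(K)}\leq Ch_{K}$ and the definition of $\alpha_{K}$. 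For $v-I_{h}v$, I would plug into the residual identity and apply the Cauchy-Schwarz inequality in combination with (\ref{convection 3}) on the $\tilde{R}_{K}$-terms, the Cl\'ement stability estimate $\|\nabla(v-I_{h}v)\|_{K}\preceq\|\nabla v\|_{\tilde\omega_{K}}\preceq\varepsilon^{-1/2}|||v|||$ on the $\varepsilon$-weighted gradient residual $\|\varepsilon^{1/2}\nabla u_{h}+\varepsilon^{-1/2}\sigma_{\mathcal{T}}\|$, and (\ref{convection 4}) on the Neumann boundary term.

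Third, since $\sigma_{\mathcal{T}}\cdot{\bf n}$ need not equal $-\varepsilon\nabla u_{h}\cdot{\bf n}$ on $\Gamma_{N}$ (unlike the explicit $RT_{0}$ recovery), I would split the boundary term by the triangle inequality as
\begin{equation*}
    \|g+\sigma_{\mathcal{T}}\cdot{\bf n}\|_{e}\leq\|g-\varepsilon\nabla u_{h}\cdot{\bf n}\|_{e}+\|(\sigma_{\mathcal{T}}+\varepsilon\nabla u_{h})\cdot{\bf n}\|_{e},
\end{equation*}
which produces both Neumann contributions appearing in (\ref{recover26}). Collecting all bounds, dividing by $|||v|||$, and taking the supremum over $v\in H_{D}^{1}(\Omega)\setminus\{0\}$ in the definition (\ref{dual norm}) yields the claimed estimate.

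The main obstacle is essentially absent: the argument is structurally identical to that of (\ref{aaa4}), and the stabilization terms $\gamma_{K}(\nabla\cdot\sigma_{\mathcal{T}},\nabla\cdot\tau_{\nu})_{K}$ in the definition of $\sigma_{\mathcal{T}}$ are irrelevant for reliability — they are exploited only in the {\em a priori} estimate (\ref{recover22}) and would matter for a separate efficiency analysis. The only conceptual point worth emphasizing is that the new $\tilde{R}_{K}$ now involves $\nabla\cdot\sigma_{\mathcal{T}}$ rather than $\varepsilon\triangle u_{h}$, so no inverse estimate is needed to relate the two; they are treated on equal footing via (\ref{convection 3}).
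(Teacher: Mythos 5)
Your proposal is correct and is essentially the paper's own proof: the paper simply states that Theorem~\ref{vvv} follows ``along the line of the proof of Theorem~\ref{recover3}'' (the implicit case \eqref{aaa4}), which is exactly the argument you reconstruct, with $\sigma_{\nu}$ replaced by $\sigma_{\mathcal{T}}\in\mathcal{V}\subset H(\mathrm{div};\Omega)$ and the boundary term split by the triangle inequality. Your observations that the $H(\mathrm{div})$-conformity eliminates interior face jumps and that the stabilization terms in \eqref{recover21} are irrelevant to the reliability bound are both accurate.
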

\begin{proof}
Following the line of the proof of Theorem~\ref{recover3}, we obtain the estimate (\ref{recover26}).
\end{proof}

%%%%%%%%%%%%%%%%%%%%
% Lemma
%%%%%%%%%%%%%%%%%%%%
\begin{lemma}\label{aaa8}
Under the assumption of Lemma \ref{recover6}, if $\sigma_{\mathcal{T}}$ is the $H({\rm div})$ recovery flux
obtained from (\ref{recover21}), then it holds that
\begin{equation}\label{aaa9}
    \Big( \sum_{e\subset\Gamma_{N}}\alpha_{e}^{2} ||(\sigma_{\mathcal{T}} +
    \varepsilon\nabla u_{h})\cdot{\bf n}||_{e}^{2} \Big)^{1/2} \preceq ||\varepsilon^{-1/2}\sigma_{\mathcal{T}}+\varepsilon^{1/2}\nabla u_{h}||.
\end{equation}
\end{lemma}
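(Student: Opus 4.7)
The plan is to mirror the argument used for Lemma \ref{aaa5} essentially verbatim, since the statement of Lemma \ref{aaa8} is structurally identical to that of Lemma \ref{aaa5}, with $\sigma_{\nu}$ replaced by $\sigma_{\mathcal{T}}$. Crucially, the proof of Lemma \ref{aaa5} never exploits the defining property of the $L^{2}$-projection; it only uses that $\sigma_{\nu}+\varepsilon\nabla u_{h}$ is a polynomial on each element $K$ adjoining a Neumann edge. Since $\sigma_{\mathcal{T}}\in \mathcal{V}\subset BDM_{1}$ and $\nabla u_{h}|_{K}\in P_{0}(K)^{d}$, the same polynomiality holds here, so the same chain of inequalities should go through.

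First, for any $e\subset \Gamma_{N}$ let $K$ be the unique element with $e\subset\partial K$. The field $\sigma_{\mathcal{T}}+\varepsilon\nabla u_{h}$ is a vector polynomial of fixed degree on $K$, so the discrete trace inequality (a consequence of the standard trace theorem combined with the inverse estimate on polynomials) gives
\begin{equation*}
    \|(\sigma_{\mathcal{T}}+\varepsilon\nabla u_{h})\cdot{\bf n}\|_{e} \preceq h_{K}^{-1/2}\|\sigma_{\mathcal{T}}+\varepsilon\nabla u_{h}\|_{K}.
\end{equation*}
Next, I would multiply by $\alpha_{e}$ and use the definition $\alpha_{e}=\min\{h_{e}^{1/2}\varepsilon^{-1/2},\varepsilon^{-1/4}\beta^{-1/4},1\}$, which gives the bound $\alpha_{e}\leq h_{e}^{1/2}/\sqrt{\varepsilon}$, together with shape regularity $h_{e}\approx h_{K}$, to obtain
\begin{equation*}
    \alpha_{e}\,\|(\sigma_{\mathcal{T}}+\varepsilon\nabla u_{h})\cdot{\bf n}\|_{e}
    \preceq \frac{h_{e}^{1/2}}{\sqrt{\varepsilon}}\, h_{K}^{-1/2}\,\|\sigma_{\mathcal{T}}+\varepsilon\nabla u_{h}\|_{K}
    \preceq \|\varepsilon^{-1/2}\sigma_{\mathcal{T}}+\varepsilon^{1/2}\nabla u_{h}\|_{K}.
\end{equation*}

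Finally, squaring and summing over all $e\subset\Gamma_{N}$, and using the bounded overlap of the patches $\{K : e\subset\partial K,\ e\subset\Gamma_{N}\}$ (each element touches at most $d{+}1$ Neumann faces, so each $K$ is counted $\mathcal{O}(1)$ times), yields the desired estimate \eqref{aaa9}. I do not expect a real obstacle here, as no property specific to the $L^{2}$-projection was invoked; the one point worth double-checking is simply that the discrete trace inequality and inverse estimate used in the first step apply uniformly to elements of $\mathcal{V}$ plus $P_{0}(K)^{d}$, which is immediate from shape regularity and the fixed polynomial degree.
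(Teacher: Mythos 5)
Your proposal is correct and is essentially the paper's own argument: the paper proves Lemma \ref{aaa8} by noting it follows exactly as Lemma \ref{aaa5}, whose proof uses only the discrete trace/inverse estimate on the element $K$ adjacent to $e$, the bound $\alpha_{e}\leq h_{e}^{1/2}/\sqrt{\varepsilon}$, shape regularity, and summation over $e\subset\Gamma_{N}$. Your observation that no property specific to the $L^{2}$-projection is used is precisely why the paper can reuse the argument verbatim for $\sigma_{\mathcal{T}}$.
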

\begin{proof}
A proof similar to Lemma \ref{aaa5} yields the assertion (\ref{aaa9}).
\end{proof}

%%%%%%%%%%%%%%%%%%%%
% Lemma
%%%%%%%%%%%%%%%%%%%%
\begin{lemma}\label{aaa10}
Let $\sigma_{\mathcal{T}}$ be the $H({\rm div})$ recovery flux obtained from (\ref{recover21}), and
$\tilde{R}_{K}$ be the residual defined in Theorem \ref{vvv}. Then it holds that
\begin{equation}\label{aaa11}
    \Big( \sum_{K\in\mathcal{T}_{h}}\alpha_{K}^{2}||\tilde{R}_{K}||_{K}^{2} \Big)^{1/2} \preceq
    \Big( \sum_{K\in\mathcal{T}_{h}}\alpha_{K}^{2}||R_{K}||_{K}^{2} \Big)^{1/2} +
    ||\varepsilon^{1/2}\nabla u_{h}+\varepsilon^{-1/2}\sigma_{\mathcal{T}}||.
\end{equation}
\end{lemma}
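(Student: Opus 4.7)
The plan is to mirror, nearly verbatim, the proof of Lemma~\ref{recover16}, because the only structural difference is that $\sigma_{\mathcal{T}}$ replaces the $L^{2}$-projected or averaged flux $\sigma_{h}$, while still lying in $\mathcal{V}=RT_{0}$ or $BDM_{1}$. In particular, on every $K\in\mathcal{T}_{h}$ both $\sigma_{\mathcal{T}}|_{K}$ and $\nabla u_{h}|_{K}$ are polynomials of bounded degree, so the inverse estimate applies to $\sigma_{\mathcal{T}}+\varepsilon\nabla u_{h}$ and its divergence on each element.

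First, I would start from the algebraic identity
\begin{equation*}
    \tilde{R}_{K}-R_{K}=-\varepsilon\triangle u_{h}-\nabla\cdot\sigma_{\mathcal{T}}
    =-\nabla\cdot(\sigma_{\mathcal{T}}+\varepsilon\nabla u_{h})\qquad\text{on }K,
\end{equation*}
which immediately yields, by the triangle inequality,
\begin{equation*}
    \|\tilde{R}_{K}\|_{K}\leq \|R_{K}\|_{K}+\|\nabla\cdot(\sigma_{\mathcal{T}}+\varepsilon\nabla u_{h})\|_{K}.
\end{equation*}
Next, since $\sigma_{\mathcal{T}}+\varepsilon\nabla u_{h}$ is a piecewise polynomial of degree $\leq 1$ on $K$, the standard inverse estimate on $K$ gives
\begin{equation*}
    \|\nabla\cdot(\sigma_{\mathcal{T}}+\varepsilon\nabla u_{h})\|_{K}
    \preceq h_{K}^{-1}\|\sigma_{\mathcal{T}}+\varepsilon\nabla u_{h}\|_{K}
    = h_{K}^{-1}\varepsilon^{1/2}\|\varepsilon^{-1/2}\sigma_{\mathcal{T}}+\varepsilon^{1/2}\nabla u_{h}\|_{K}.
\end{equation*}

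Then I would multiply through by $\alpha_{K}$ and invoke the crucial weight inequality $\alpha_{K}\leq h_{K}/\sqrt{\varepsilon}$, so that the factor $h_{K}^{-1}\varepsilon^{1/2}$ is absorbed, yielding
\begin{equation*}
    \alpha_{K}\|\tilde{R}_{K}\|_{K}\preceq \alpha_{K}\|R_{K}\|_{K}
    +\|\varepsilon^{-1/2}\sigma_{\mathcal{T}}+\varepsilon^{1/2}\nabla u_{h}\|_{K}.
\end{equation*}
Squaring, summing over $K\in\mathcal{T}_{h}$, and taking square roots delivers exactly \eqref{aaa11}.

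I do not foresee any real obstacle: the argument is purely local, the identity for $\tilde{R}_{K}-R_{K}$ is an elementwise computation, the inverse estimate is standard, and the choice of the weight $\alpha_{K}$ is tailored precisely so that the $h_{K}^{-1}\varepsilon^{1/2}$ factor from the inverse estimate cancels. The only point worth double-checking is that $\sigma_{\mathcal{T}}\in\mathcal{V}\subset BDM_{1}$ so that the inverse estimate on $\nabla\cdot(\sigma_{\mathcal{T}}+\varepsilon\nabla u_{h})$ is legitimate with a shape-regularity-only constant; this is immediate from the definition of $\mathcal{V}$ in Section~\ref{FluxRec}.
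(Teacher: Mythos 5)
Your proof is correct and coincides with the paper's: the authors prove Lemma~\ref{aaa10} simply by "following the line of the proof of Lemma~\ref{recover16}," which is exactly the triangle-inequality/inverse-estimate/$\alpha_{K}\leq h_{K}/\sqrt{\varepsilon}$ argument you spell out. No issues.
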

\begin{proof}
Following the line of the proof of Lemma \ref{recover16}, we obtain the assertion (\ref{aaa11}).
\end{proof}

%%%%%%%%%%%%%%%%%%%%
% Lemma
%%%%%%%%%%%%%%%%%%%%
\begin{lemma}\label{supplement}
Let $u$ and $u_{h}$ be the solutions to (\ref{PDE3}) and (\ref{PDE6}), respectively, and $\sigma_{\mathcal{T}}$ be the
$H(\rm div)$ recovery flux obtained from (\ref{recover21}). Then it holds that
\begin{equation}\label{supplement1}
||\varepsilon^{-1/2}\sigma_{\mathcal{T}}+\varepsilon^{1/2}\nabla u_{h}|| \preceq |||u-u_{h}|||_{*}+{\rm osc}_{h}.
\end{equation}
\end{lemma}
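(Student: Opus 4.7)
The plan is to exploit the fact that $\sigma_{\mathcal{T}}$ is the minimizer of a symmetric quadratic form on $\mathcal{V}$ and compare it with the explicit recovery flux $\hat{\sigma}_{RT_{0}}(u_{h})$, in parallel with the proof of Lemma~\ref{yingli5.2}. First, I would observe that since the bilinear form on the left of \eqref{recover21} is symmetric positive-definite on $\mathcal{V}$, completing the square shows that \eqref{recover21} is the Euler--Lagrange equation for the minimization of
\begin{equation*}
    \Psi(\tau) := ||\varepsilon^{-1/2}\tau + \varepsilon^{1/2}\nabla u_{h}||^{2} + \sum_{K\in\mathcal{T}_{h}}\gamma_{K}||\nabla\cdot\tau - R_{K}||_{K}^{2}
\end{equation*}
over $\tau\in\mathcal{V}$, with $R_{K}$ as in \eqref{Eq:CellResidl}. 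Indeed, $\varepsilon\triangle u_{h}$ vanishes on every $K$ for the $P_{1}$ space, so this $R_{K}$ coincides with $f - {\bf a}\cdot\nabla u_{h} - bu_{h}$, the data on the right-hand side of \eqref{recover21}. Hence $\Psi(\sigma_{\mathcal{T}}) \leq \Psi(\tau)$ for every $\tau\in\mathcal{V}$.

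Next I would test this minimization with $\tau = \hat{\sigma}_{RT_{0}}(u_{h})$, which belongs to $\mathcal{V}$ because $RT_{0}\subset BDM_{1}$. Discarding the non-negative divergence square on the left and writing $\tilde{R}_{K} := R_{K} - \nabla\cdot\hat{\sigma}_{RT_{0}}(u_{h})$ (which matches \eqref{Eq:CellResidl} with $\sigma_{h} = \hat{\sigma}_{RT_{0}}(u_{h})$), the inequality $\Psi(\sigma_{\mathcal{T}}) \leq \Psi(\hat{\sigma}_{RT_{0}}(u_{h}))$ yields
\begin{equation*}
    ||\varepsilon^{-1/2}\sigma_{\mathcal{T}} + \varepsilon^{1/2}\nabla u_{h}||^{2} \leq ||\varepsilon^{-1/2}\hat{\sigma}_{RT_{0}}(u_{h}) + \varepsilon^{1/2}\nabla u_{h}||^{2} + \sum_{K\in\mathcal{T}_{h}}\gamma_{K} ||\tilde{R}_{K}||_{K}^{2}.
\end{equation*}
The first term on the right is controlled by $|||u-u_{h}|||_{*}^{2} + {\rm osc}_{h}^{2}$ via Lemma~\ref{recover6}. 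For the second term, Lemma~\ref{recover16} applied with $\sigma_{h} = \hat{\sigma}_{RT_{0}}(u_{h})$ combined with Lemmas~\ref{recover15} and~\ref{recover6} bounds $\sum_{K}\alpha_{K}^{2} ||\tilde{R}_{K}||_{K}^{2}$ by the same quantity; consequently, if $\gamma_{K} \preceq \alpha_{K}^{2}$, one may absorb $\gamma_{K}$ into $\alpha_{K}^{2}$ and conclude \eqref{supplement1}.

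The main obstacle is the compatibility of the parameter choice: the a priori constraint $\gamma_{K} \leq h_{K}\min\{1/||{\bf a}||_{L^{\infty}(K)}, 1/\sqrt{\beta\varepsilon}\}$ used in \eqref{recover22} does not by itself force $\gamma_{K} \preceq \alpha_{K}^{2} = \min\{h_{K}^{2}/\varepsilon,\, 1/\beta,\, h_{K}\}$ in every regime (a direct comparison shows counterexamples when $h_{K}^{2}\beta$ is small relative to $\varepsilon$). I would therefore record the additional restriction $\gamma_{K} \preceq \alpha_{K}^{2}$ as part of the parameter selection and verify, by a short case analysis in $\varepsilon$, $\beta$, and $h_{K}$, that taking $\gamma_{K}$ to be the minimum of the two upper bounds is non-vacuous and preserves the optimal a priori convergence rate of \eqref{recover22}. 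Once this choice is fixed, the remainder of the argument assembles mechanically from Lemmas~\ref{recover6}, \ref{recover15}, and \ref{recover16}.
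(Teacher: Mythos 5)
Your argument is correct, and it takes a genuinely different route from the paper's. The paper proves \eqref{supplement1} by working directly with the Galerkin equation \eqref{recover21}: it writes $||\varepsilon^{-1/2}\sigma_{\mathcal{T}}+\varepsilon^{1/2}\nabla u_{h}||^{2}$ as an inner product against $\varepsilon^{-1/2}\tau_{\nu}+\varepsilon^{1/2}\nabla u_{h}$ plus a stabilization remainder, bounds the remainder with inverse estimates on both $\nabla\cdot\sigma_{\mathcal{T}}$ and $\nabla\cdot(\sigma_{\mathcal{T}}-\tau_{\nu})$, and absorbs terms via Young's inequality after imposing the extra constraint $\gamma_{K}\leq h_{K}\alpha_{K}/(8C^{2}\sqrt{\varepsilon})$; it then takes $\tau_{\nu}$ to be the implicit or explicit recovery flux and invokes Lemmas~\ref{recover15}--\ref{yingli5.2}. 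You instead exploit the symmetry of the stabilized form to recast \eqref{recover21} as the minimization of $\Psi$, test with $\hat{\sigma}_{RT_{0}}(u_{h})$, and discard the nonnegative divergence term on the left; your identification of the minimized functional (after completing the square, using that $\triangle u_{h}$ vanishes elementwise for $P_{1}$) is correct, as is the reduction of the extra term to $\sum_{K}\gamma_{K}||\tilde{R}_{K}||_{K}^{2}$ with $\tilde{R}_{K}$ built from the explicit recovery, which Lemmas~\ref{recover16}, \ref{recover15}, and \ref{recover6} then control. What your approach buys is the elimination of the inverse estimate on $\nabla\cdot(\sigma_{\mathcal{T}}-\tau_{\nu})$ and of the absorption bookkeeping, at the price of the additional restriction $\gamma_{K}\preceq\alpha_{K}^{2}$, which you correctly flag as not implied by the constraint used for \eqref{recover22}. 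Note that the paper faces the same issue and resolves it with the weaker extra condition $\gamma_{K}\leq h_{K}\alpha_{K}/(8C^{2}\sqrt{\varepsilon})$ (weaker because $\alpha_{K}\leq h_{K}/\sqrt{\varepsilon}$ gives $\alpha_{K}^{2}\leq h_{K}\alpha_{K}/\sqrt{\varepsilon}$); your more restrictive choice is still a nonvacuous, positive parameter selection and does not affect the a priori bound, so the difference is cosmetic rather than substantive.
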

\begin{proof}
For all $\tau_{\nu}\in\mathcal{V}$, we have from (\ref{recover21}) that
\begin{align}
    \nonumber
    ||\varepsilon^{-1/2}\sigma_{\mathcal{T}} + & \varepsilon^{1/2}\nabla u_{h}||^{2} =
    (\varepsilon^{-1/2} \sigma_{\mathcal{T}} + \varepsilon^{1/2} \nabla u_{h}, \varepsilon^{-1/2}\tau_{\nu} +
    \varepsilon^{1/2}\nabla u_{h}) \\
    \nonumber
    & + (\varepsilon^{-1/2}\sigma_{\mathcal{T}} + \varepsilon^{1/2}\nabla u_{h}, \varepsilon^{-1/2} (\sigma_{\mathcal{T}}-\tau_{\nu})) \\
    \nonumber
    =~ & (\varepsilon^{-1/2}\sigma_{\mathcal{T}} + \varepsilon^{1/2}\nabla u_{h}, \varepsilon^{-1/2}\tau_{\nu} +
    \varepsilon^{1/2}\nabla u_{h}) \\
    \label{recover28}
    & + \sum_{K\in\mathcal{T}_{h}} \gamma_{K} (f-{\bf a}\cdot\nabla u_{h}-bu_{h}-
    \nabla\cdot\sigma_{\mathcal{T}}, \nabla\cdot(\sigma_{\mathcal{T}}-\tau_{\nu}))_{K}.
\end{align}
An inverse estimate leads to
\begin{align}
    \nonumber
    & (f-{\bf a}\cdot\nabla u_{h}-bu_{h}-\nabla\cdot\sigma_{\mathcal{T}}, \nabla\cdot(\sigma_{\mathcal{T}}-\tau_{\nu}))_{K} \\
    \nonumber
    = & \ (R_{K}-(\varepsilon\triangle u_{h}+\nabla\cdot\sigma_{\mathcal{T}}),
    \nabla\cdot(\sigma_{\mathcal{T}}-\tau_{\nu}))_{K} \\
    \label{recover29}
    \leq & (||R_{K}||_{K}+Ch_{K}^{-1}\varepsilon^{1/2}||\varepsilon^{1/2}\nabla u_{h}+
    \varepsilon^{-1/2}\sigma_{\mathcal{T}}||_{K}) Ch_{K}^{-1}\varepsilon^{1/2}||\varepsilon^{-1/2}(\sigma_{\mathcal{T}}-\tau_{\nu})||_{K}.
\end{align}

Choose $\gamma_{K}>0$ to satisfy
\begin{equation*}
    \gamma_{K}\leq h_{K} \min \Big\{\frac{1}{||{\bf a}||_{L^{\infty}(K)}}, \frac{1}{\sqrt{\beta\varepsilon}},
    \frac{\alpha_{K}}{8C^{2}\sqrt{\varepsilon}} \Big\} \quad \forall K\in\mathcal{T}_{h}.
\end{equation*}
From (\ref{recover28})-(\ref{recover29}), Young'inequality, $\alpha_{K}\leq h_{K}/\sqrt{\varepsilon}$, and triangle inequality, we have
\begin{align*}
    & ||\varepsilon^{-1/2}\sigma_{\mathcal{T}}+\varepsilon^{1/2}\nabla u_{h}||^{2} \leq
    \frac{1}{8} ||\varepsilon^{-1/2}\sigma_{\mathcal{T}}+\varepsilon^{1/2}\nabla u_{h}||^{2} +
    2||\varepsilon^{-1/2}\tau_{\nu}+\varepsilon^{1/2}\nabla u_{h}||^{2} \\
    & \quad \quad + \sum_{K\in\mathcal{T}_{h}} \Big(\frac{1}{8C}\alpha_{K}||R_{K}||_{K} +
    \frac{1}{8}||\varepsilon^{-1/2}\sigma_{\mathcal{T}}+
    \varepsilon^{1/2}\nabla u_{h}||_{K} \Big) ||\varepsilon^{-1/2}(\sigma_{\mathcal{T}}-\tau_{\nu})||_{K} \\
    \leq & \frac{3}{8}||\varepsilon^{-1/2}\sigma_{\mathcal{T}} + \varepsilon^{1/2}\nabla u_{h}||^{2} +
    \frac{17}{8}||\varepsilon^{1/2}\nabla u_{h}+\varepsilon^{-1/2}\tau_{\nu}||^{2} +
    \frac{1}{8C^{2}} \sum_{K\in\mathcal{T}_{h}} \alpha_{K}^{2} ||R_{K}||_{K}^{2},
\end{align*}
which results in, for all $\tau_{\nu}\in\mathcal{V}$,
\begin{equation*}
    ||\varepsilon^{-1/2}\sigma_{\mathcal{T}}+\varepsilon^{1/2}\nabla u_{h}||^{2}
    \leq \frac{17}{5} ||\varepsilon^{1/2}\nabla u_{h} + \varepsilon^{-1/2}\tau_{\nu}||^{2} +
    \frac{1}{5C^{2}} \sum_{K\in\mathcal{T}_{h}} \alpha_{K}^{2} ||R_{K}||_{K}^{2}.
\end{equation*}
Therefore,
\begin{equation*}
    ||\varepsilon^{-1/2}\sigma_{\mathcal{T}} + \varepsilon^{1/2}\nabla u_{h}||^{2} \leq
    \frac{17}{5} \min_{\tau_{\nu}\in\mathcal{V}} ||\varepsilon^{1/2}\nabla u_{h} +
    \varepsilon^{-1/2}\tau_{\nu}||^{2} + \frac{1}{5C^{2}} \sum_{K\in\mathcal{T}_{h}} \alpha_{K}^{2} ||R_{K}||_{K}^{2}.
\end{equation*}
By taking $\tau_{\nu}$ obtained by the implicit approximation (\ref{recover2}) or the
explicit approximation (\ref{recover4}), and using the fact that $RT_{0}\subset BDM_{1}$ and
Lemmas~\ref{recover15}-\ref{yingli5.2}, we obtain the assertion (\ref{supplement1}).
\end{proof}

%%%%%%%%%%%%%%%%%%%%
% Theorem
%%%%%%%%%%%%%%%%%%%%
\begin{theorem}
Let $u$ and $u_{h}$ be the solutions to (\ref{PDE3}) and (\ref{PDE6}), respectively, and $\sigma_{\mathcal{T}}$ be the
$H(\rm div)$ recovery flux obtained from (\ref{recover21}). Then there holds
\begin{align}
    \label{recover27}
    & \Big( \sum_{K\in\mathcal{T}_{h}}\alpha_{K}^{2}(||R_{K}||_{K}^{2} + ||\tilde{R}_{K}||_{K}^{2}) +
    ||\varepsilon^{1/2}\nabla u_{h}+\varepsilon^{-1/2}\sigma_{\mathcal{T}}||^{2}\Big)^{1/2} \\
    \nonumber
    & + \Big( \sum_{e\subset\Gamma_{N}}\alpha_{e}^{2} (||g-\varepsilon\nabla u_{h}\cdot{\bf n}||_{e}^{2} +
    ||(\sigma_{\mathcal{T}}+\varepsilon\nabla u_{h})\cdot{\bf n}||_{e}^{2} ) \Big)^{1/2}
    \preceq |||u-u_{h}|||_{*}+{\rm osc}_{h}.
\end{align}
\end{theorem}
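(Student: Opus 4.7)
The plan is to mimic the proof of Theorem~\ref{add4} (the efficiency estimate for the implicit RT/BDM recoveries), replacing the roles of Lemmas~\ref{yingli5.2} and~\ref{aaa5} by their $H(\mathrm{div})$-recovery counterparts Lemma~\ref{supplement} and Lemma~\ref{aaa8}. All the pieces have in fact been assembled in the preceding lemmas; the task is to chain them together in the right order and absorb a self-referential $\|\varepsilon^{1/2}\nabla u_h + \varepsilon^{-1/2}\sigma_{\mathcal T}\|$ term that will appear on the right-hand side.

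First I would handle the flux residual: Lemma~\ref{supplement} immediately gives
\begin{equation*}
\|\varepsilon^{1/2}\nabla u_{h}+\varepsilon^{-1/2}\sigma_{\mathcal{T}}\|^{2}\preceq |||u-u_{h}|||_{*}^{2}+{\rm osc}_{h}^{2},
\end{equation*}
so this contribution to the left-hand side of \eqref{recover27} is already under control. Next I would use Lemma~\ref{aaa8} to dominate the Neumann-jump term involving $\sigma_{\mathcal T}$ by $\|\varepsilon^{1/2}\nabla u_{h}+\varepsilon^{-1/2}\sigma_{\mathcal{T}}\|$, and Lemma~\ref{aaa10} to dominate $\sum_K \alpha_K^{2}\|\tilde R_K\|_K^{2}$ by $\sum_K \alpha_K^{2}\|R_K\|_K^{2}$ plus the same flux residual. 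At this stage the only quantities not yet bounded by $|||u-u_h|||_* + {\rm osc}_h$ are $\sum_K \alpha_K^{2}\|R_K\|_K^{2}$ and the pure Neumann residual $\sum_{e\subset\Gamma_N}\alpha_e^{2}\|g-\varepsilon\nabla u_h\cdot\mathbf n\|_e^{2}$; but both of these are precisely what Lemma~\ref{recover15} controls, since for $e\subset\Gamma_N$ one has $R_e = g+\boldsymbol\tau\cdot\mathbf n_e = g-\varepsilon\nabla u_h\cdot\mathbf n_e$.

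Concretely, I would write the sum of the two squared terms on the left-hand side of \eqref{recover27}, insert Lemma~\ref{aaa10} for the $\tilde R_K$ part and Lemma~\ref{aaa8} for the $\sigma_{\mathcal T}\cdot\mathbf n$ part, and then extend $\sum_{e\subset\Gamma_N}$ to $\sum_K\sum_{e\subset\partial K}$ in the bound of Lemma~\ref{recover15} (which only strengthens the right-hand side). This produces
\begin{equation*}
\text{LHS}^{2}\preceq \sum_{K}\Bigl(\alpha_K^{2}\|R_K\|_K^{2}+\sum_{e\subset\partial K}\alpha_e^{2}\|R_e\|_e^{2}\Bigr)+\|\varepsilon^{1/2}\nabla u_h+\varepsilon^{-1/2}\sigma_{\mathcal T}\|^{2},
\end{equation*}
after which Lemmas~\ref{recover15} and~\ref{supplement} finish the estimate. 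The proof then concludes by taking a square root and splitting the sum via the elementary inequality $\sqrt{A+B}\le \sqrt A+\sqrt B$ to recover the two-bracket form displayed in \eqref{recover27}.

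The only mildly subtle point is bookkeeping: because $\tilde R_K$ here is defined with $\sigma_{\mathcal T}$ (not $\sigma_\nu$ or $\hat\sigma_{RT_0}$), one must make sure Lemma~\ref{aaa10} is the version applied (it is, by hypothesis), and that the extra $\|\varepsilon^{1/2}\nabla u_h+\varepsilon^{-1/2}\sigma_{\mathcal T}\|$ generated by Lemmas~\ref{aaa10} and~\ref{aaa8} is absorbed by Lemma~\ref{supplement} rather than creating a circular dependence. Since Lemma~\ref{supplement} already bounds this quantity directly by $|||u-u_h|||_*+{\rm osc}_h$ without appealing to $\tilde R_K$, the argument is linear and no hidden obstacle arises. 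The proof will consequently be a short one-liner pointing to the chain Lemmas~\ref{recover15}, \ref{aaa8}, \ref{aaa10}, \ref{supplement}, exactly as the authors indicate in analogous places.
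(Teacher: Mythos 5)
Your proposal is correct and follows exactly the paper's own argument: the authors' proof is literally ``Collecting Lemma~\ref{recover15}, and Lemmas~\ref{aaa8}--\ref{supplement}, we obtain the estimate (\ref{recover27}),'' and your chaining of Lemma~\ref{supplement} (flux residual), Lemma~\ref{aaa8} (Neumann jump), Lemma~\ref{aaa10} ($\tilde R_K$ term), and Lemma~\ref{recover15} (cell and edge residuals, noting $R_e=g-\varepsilon\nabla u_h\cdot{\bf n}_e$ on $\Gamma_N$) is precisely that chain, correctly ordered so that no circularity arises.
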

\begin{proof}
Collecting Lemma~\ref{recover15}, and Lemmas~\ref{aaa8}-\ref{supplement}, we obtain the estimate (\ref{recover27}).
\end{proof}

\begin{remark}[On three recovering approaches]
First, note that the explicit recovering does not require solving an algebraic system, which, however, is
demanded by the implicit and $H({\rm div})$ approaches. From the perspective of accuracy, the implicit
and $H({\rm div})$ recoveries are intuitively better than the explicit scheme. $L^{2}$-projection
recovery is a special case of $H({\rm div})$ recovering in which the stabilization
parameter $\gamma_{K}=0$. $H({\rm div})$ recovering is based on mixed FEM, which has been
used to precisely approximate the flux \cite{cai2}. In particularly, for $L^{2}$-projection recovery,
when $\mathcal{V}=BDM_{1}$, following the idea of multipoint flux mixed FEM
in \cite{Yotov06}, one concludes that the cost of solving an algebraic system is equivalent to that for computing the estimator.
\end{remark}

%%%%%%%%%%%%%%%%%%%%%%%%%%%%%%%%%%%%%%%%%%%%%%%%%%%%%%%%%%%
%%%%%%%%%%%%%%%%%%%%%%%%%%%%%%%%%%%%%%%%%%%%%%%%%%%%%%%%%%%
\section{Numerical experiments}\label{NumExp}
In this section, we will demonstrate the performance of our {\em a posteriori} error estimators in two example problems.

%%%%%%%%%%%%%%%%%%%%%%%%%%%%%%%%%%%%%%%%
%%%%%%%%%%%%%%%%%%%%%%%%%%%%%%%%%%%%%%%%
\subsection{Example 1: boundary layer}
In this example, we take $\Omega=(0,1)^2$, ${\bf a}=(1,1)$, and $b=1$. We use $\beta=1$ and set the right-hand side $f$ so that the exact solution of (\ref{PDE1}) is
\begin{equation*}
    u(x,y) = \Big( \frac{\exp(\frac{x-1}{\varepsilon})-1}{\exp(-\frac{1}{\varepsilon})-1}+x-1 \Big)
    \Big( \frac{\exp(\frac{y-1}{\varepsilon})-1}{\exp(-\frac{1}{\varepsilon})-1}+y-1 \Big).
\end{equation*}
Clearly, $u$ is $0$ on $\Gamma$ and has boundary layers of width $\mathcal{O}(\varepsilon)$ along $x=1$ and $y=1$. Note that for a fixed $\varepsilon$, similar as in \cite{AUGUSTIN}, one can numerically compute the characteristic layers. However, we shall be focused on numerical robustness of the estimators in this paper.

The coarsest triangulation $\mathcal{T}_{0}$ is obtained from halving 4 congruent squares by connecting the bottom right and top left corners. We employ D\"{o}rfler strategy with the marking parameter $\theta=0.5$, and use the ``longest edge" refinement to obtain an admissible mesh.

%%%%%%%%%%%%%%%%%%%%%%%%%%%%%%%%%%%%%%%%
\begin{figure}[t]
    \centering
    \includegraphics[width=5in]{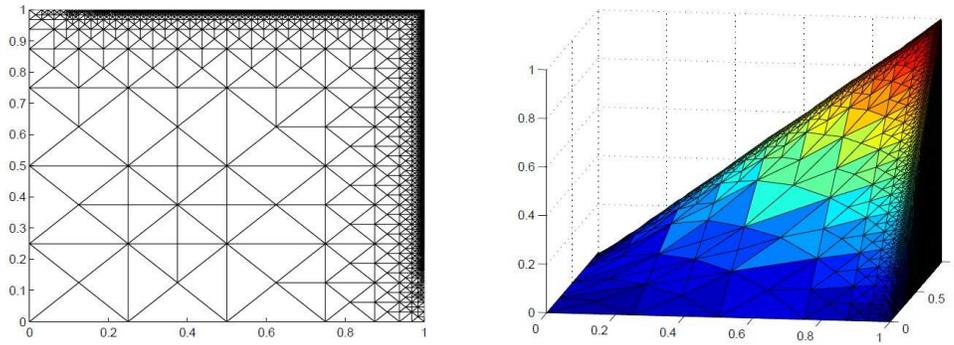}
    \caption{An adaptive mesh with 54855 triangles (left) and the approximation of displacement (piecewise linear element) on the corresponding mesh (right) for $\varepsilon=10^{-12}$ by using the estimator from \eqref{recover4}.}
    \label{Fig.Eg1.2}
\end{figure}

\begin{figure}[t]
    \centering
    \includegraphics[width=5in]{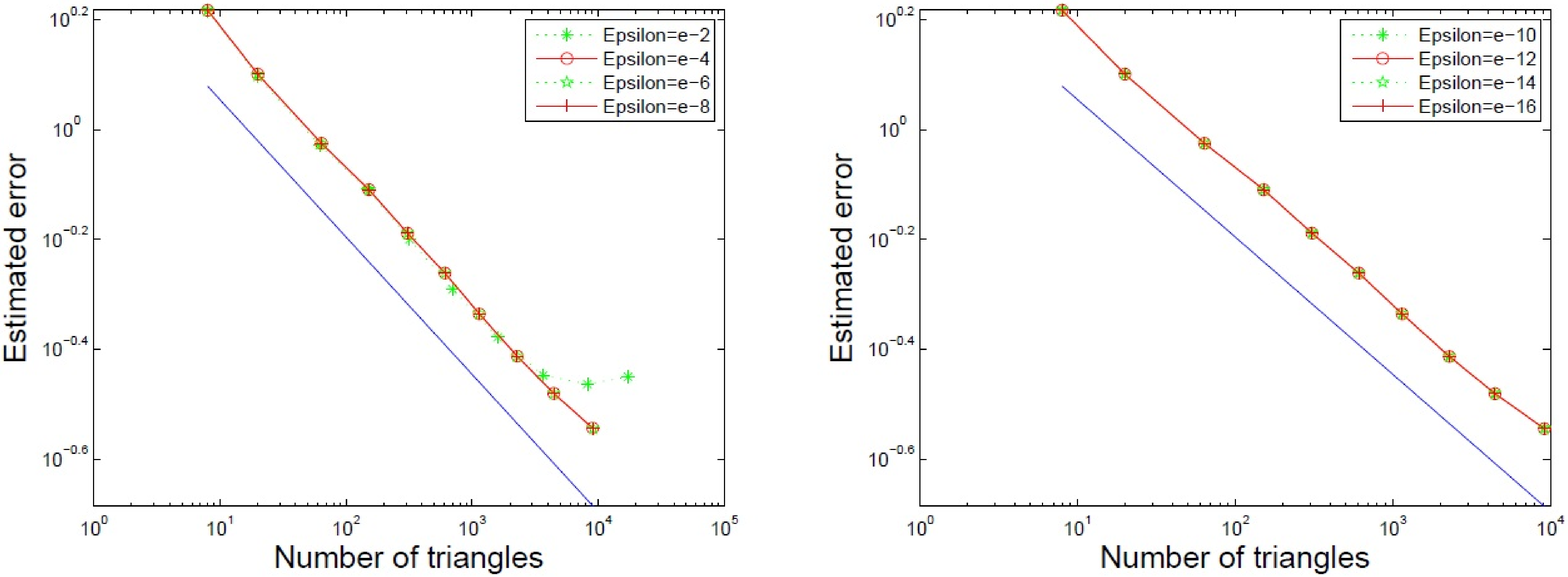}
    \caption{Estimated error of the flux against the number of elements in adaptively refined
    meshes for $\varepsilon$ from $10^{-2}$ to $10^{-8}$ (left) and from $10^{-10}$ to $10^{-16}$ (right) by
    using the estimator from \eqref{recover4}.}
    \label{Fig.Eg1.3}
\end{figure}
%%%%%%%%%%%%%%%%%%%%%%%%%%%%%%%%%%%%%%%%

%%%%%%%%%%%%%%%%%%%%%%%%%%%%%%%%%%%%%%%%
\begin{figure}[t]
    \centering
    \begin{minipage}[t]{0.92\linewidth}
        \includegraphics[width=0.92\linewidth]{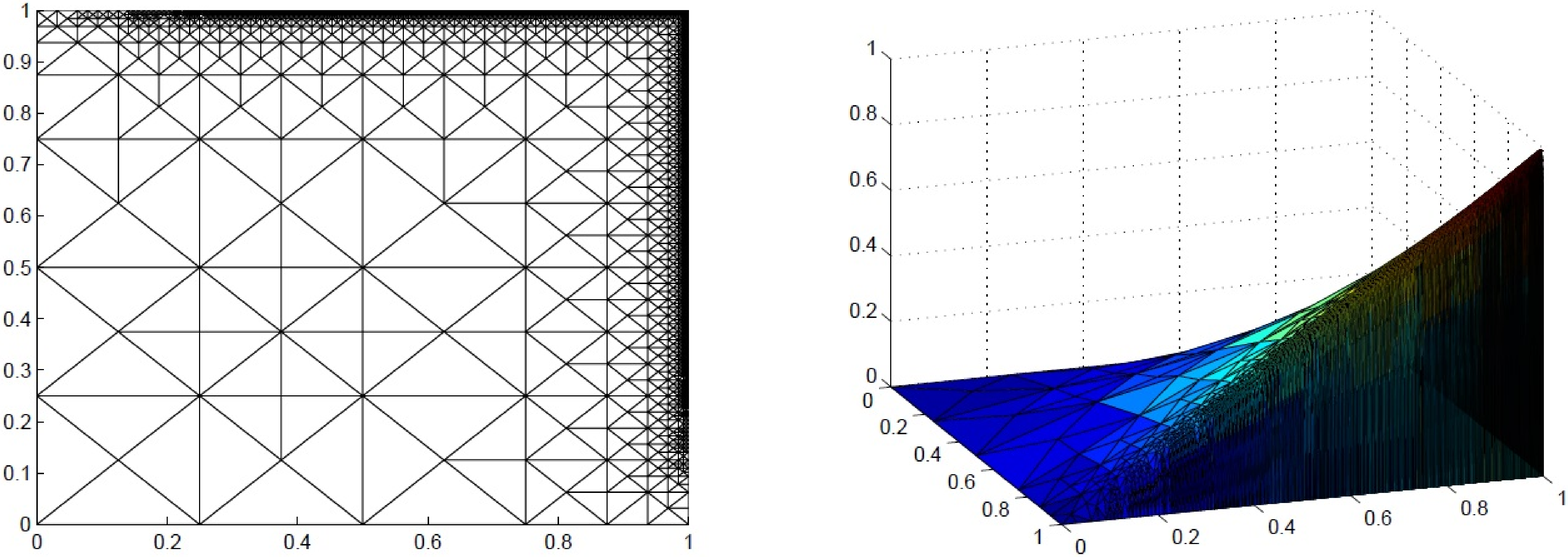}
    \end{minipage}
    \caption{An adaptive mesh with 30869 triangles (left) and the approximation of displacement (piecewise linear element) on the corresponding mesh (right) for $\varepsilon=10^{-16}$ by using the estimator from \eqref{recover2}.}
    \label{Fig.Eg1.4}

    \begin{minipage}[t]{0.92\linewidth}
        \includegraphics[width=0.92\linewidth]{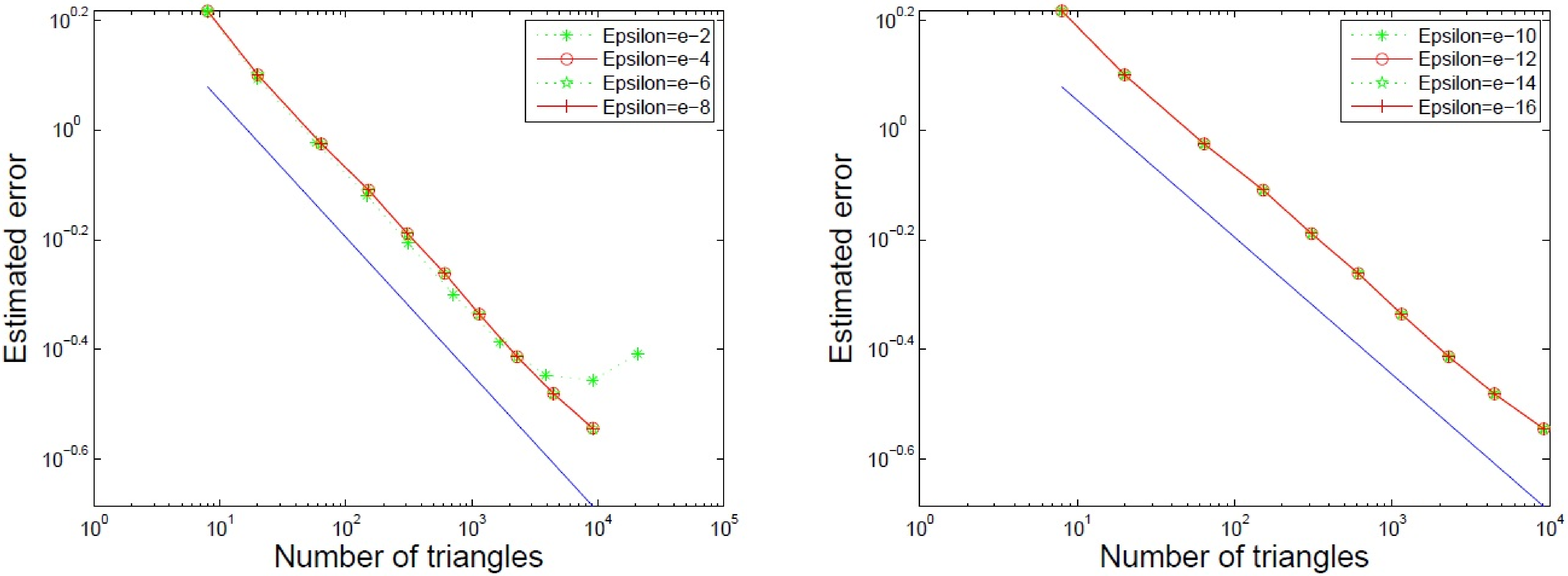}
    \end{minipage}
    \caption{Estimated error of the flux against the number of elements in adaptively refined meshes for $\varepsilon$ from $10^{-2}$ to $10^{-8}$ (left) and from $10^{-10}$ to $10^{-16}$ (right) by using the estimator from \eqref{recover2}.}
    \label{Fig.Eg1.5}
\end{figure}
%%%%%%%%%%%%%%%%%%%%%%%%%%%%%%%%%%%%%%%%

%%%%%%%%%%%%%%%%%%%%%%%%%%%%%%%%%%%%%%%%
\begin{figure}[t]
    \centering
    \begin{minipage}[t]{0.92\linewidth}
        \includegraphics[width=0.92\linewidth]{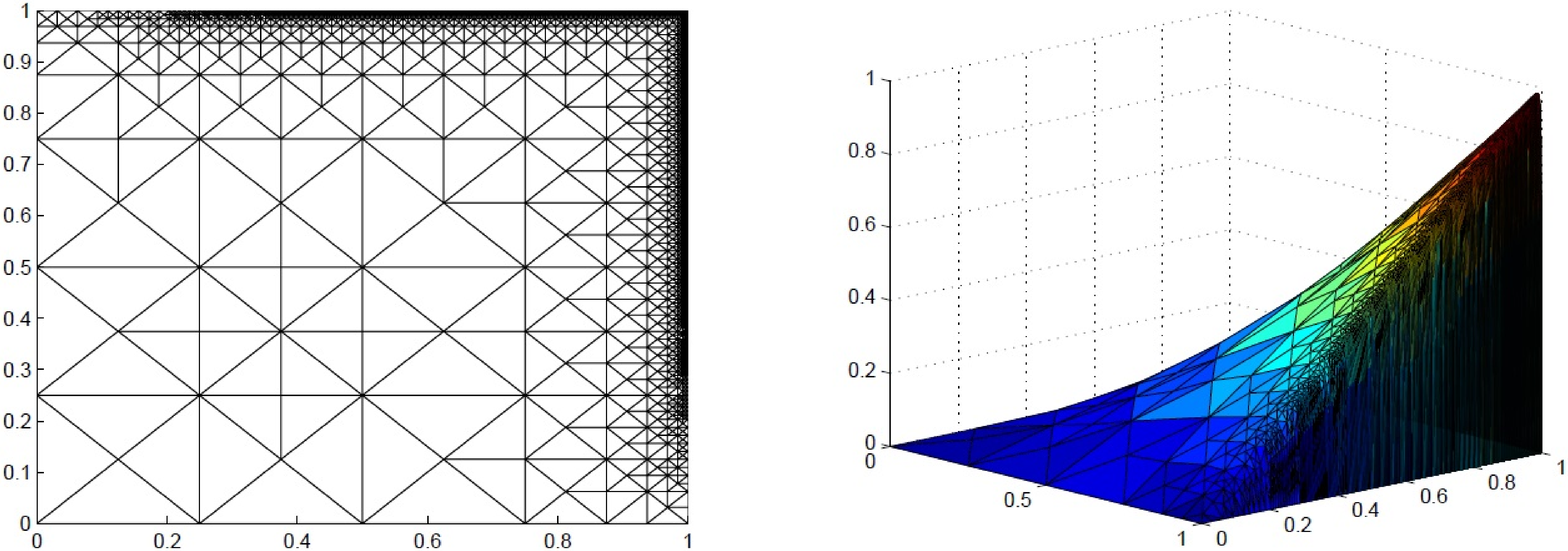}
    \end{minipage}
    \caption{An adaptive mesh with 17382 triangles (left) and the approximation of displacement (piecewise linear element) on the corresponding mesh (right) for $\varepsilon=10^{-16}$ by using the estimator from (\ref{recover21}).}
    \label{Fig.Eg1.6}

    \begin{minipage}[t]{0.92\linewidth}
        \includegraphics[width=0.92\linewidth]{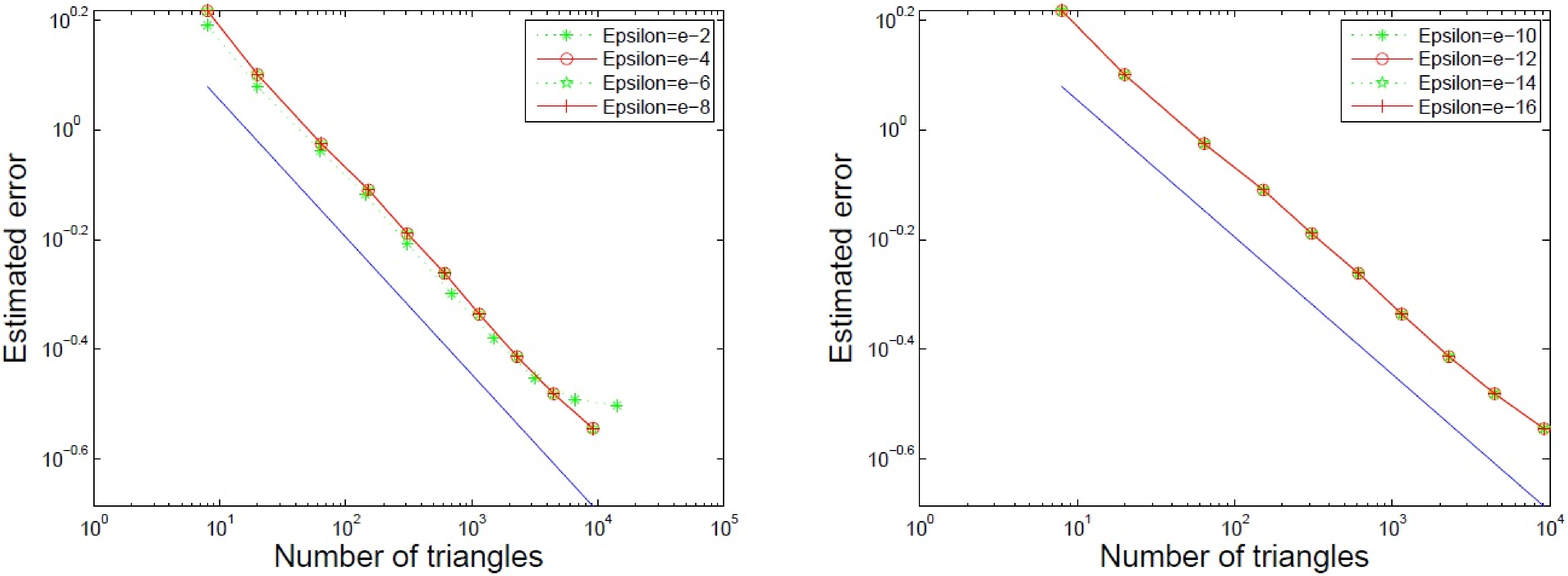}
    \end{minipage}
    \caption{Estimated error of the flux against the number of elements in adaptively refined meshes
    for $\varepsilon$ from $10^{-2}$ to $10^{-8}$ (left) and from $10^{-10}$ to $10^{-16}$ (right) by
    using the estimator from (\ref{recover21}).}
    \label{Fig.Eg1.7}

    \begin{minipage}[t]{0.92\linewidth}
        \includegraphics[width=0.92\linewidth]{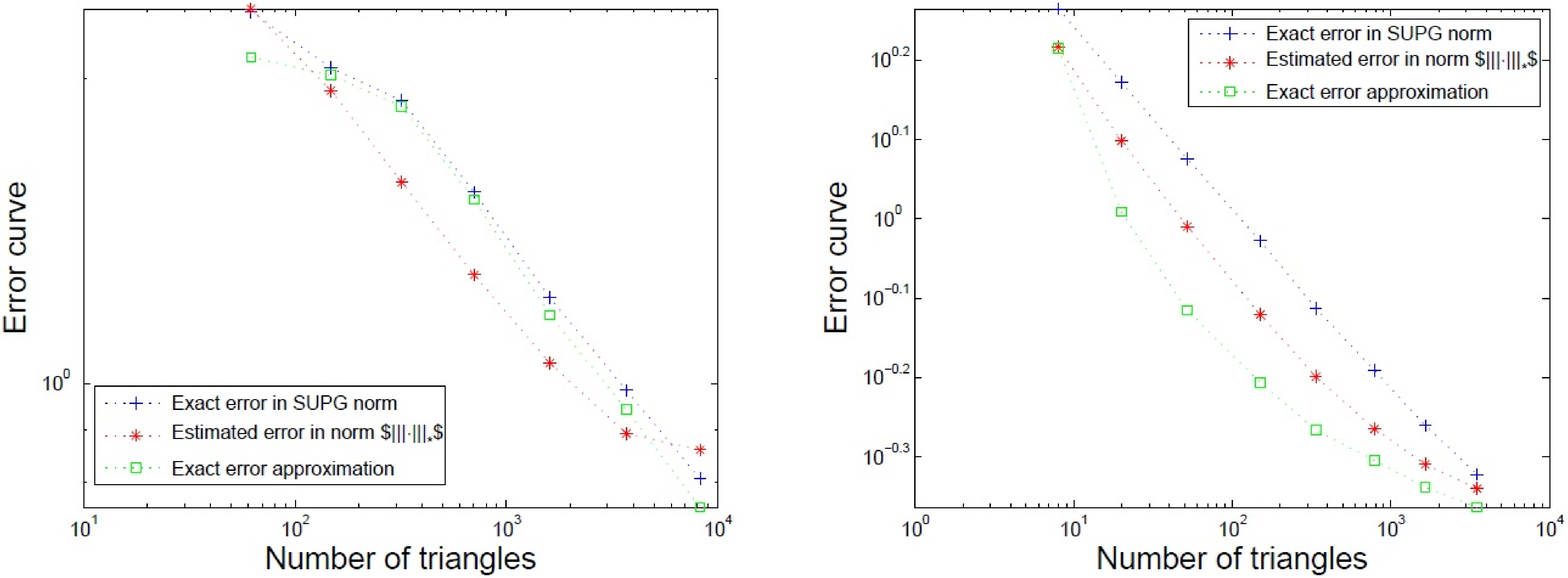}
    \end{minipage}
    \caption{\it Exact error in SUPG norm, estimated error in norm $|||\cdot|||_{*}$, and exact error
    approximation in norm $|||\cdot|||_{\varepsilon}$ for explicit recovering
    for Example 1 with $\theta=0.5$, $\varepsilon=10^{-2}$, and $\delta_{K}=16h_{K}$ (left),
    and $\theta=0.5$, $\varepsilon=10^{-6}$, and $\delta_{K}=4h_{K}$ (right).}
     \label{aaaabb}
\end{figure}
%%%%%%%%%%%%%%%%%%%%%%%%%%%%%%%%%%%%%%%%

In Figures~\ref{Fig.Eg1.2}, \ref{Fig.Eg1.4}, and \ref{Fig.Eg1.6}, we plot adaptive meshes and numerical
displacements by using the estimators obtained from the explicit recovery \eqref{recover4}, the $L^{2}$-projection
recovery \eqref{recover2}, and the $H({\rm div})$ recovery (\ref{recover21}), respectively. Here the stabilization parameter
is chosen as $\delta_{K}=h_{K}$ on each element $K\in\mathcal{T}_{h}$. Note that the constant $C$ in
the stabilization parameter $\gamma_{K}$ in $H({\rm div})$ recovery (\ref{recover21}) is taken as $C=1$ throughout numerical experiments.

It is observed that strong mesh refinements occur along $x=1$ and $y=1$, where the estimators correctly capture boundary layers and resolve them in convection-dominated regimes. Figures~\ref{Fig.Eg1.3}, \ref{Fig.Eg1.5}, and \ref{Fig.Eg1.7}, which are respectively in correspondence to (4.1), (4.2), and \eqref{aaa9}, report the estimated error against the number of elements in adaptively refined meshes obtained by using estimators from flux recoveries \eqref{recover4}, \eqref{recover2}, and (\ref{recover21}), respectively. Here $\delta_{K}=16 h_{K}$, the errors are measured in $|||\cdot|||_{*}$, and $\varepsilon$ is from $10^{-2}$ to $10^{-16}$. It is observed that the estimated errors depend on $DOF$ uniformly in $\varepsilon$. The estimators work well even if P\'{e}clet number is large, and the estimated errors of all three cases are convergent. As indicated in Remark~\ref{Rk:Norms}, we substitute $|||u-u_{h}|||_{*}$ with $||u-u_{h}||_{\rm SUPG}$ or $|||u-u_{h}|||_{\varepsilon}$ to compute the effectivity indices. We point out that the performance of the true error $|||u-u_{h}|||_{*}$ is between that of $|||u-u_{h}|||_{\varepsilon}$ and $||u-u_{h}||_{{\rm SUPG}}$ up to a multiple of a constant independent of $h$ and $\varepsilon$. To confirm this assertion, Figure~\ref{aaaabb} illustrates $||u-u_{h}||_{{\rm SUPG}}$, the estimated error, and $|||u-u_{h}|||_{\varepsilon}$. It is observed that, in the convection-dominated regime, the behavior of the true error is very similar to that of $|||u-u_{h}|||_{\varepsilon}$ and $||u-u_{h}||_{{\rm SUPG}}$. Thus, it is reasonable to use $|||u-u_{h}|||_{\varepsilon}$ or $||u-u_{h}||_{{\rm SUPG}}$ to approximate the true error $|||u-u_{h}|||_{*}$ when convection dominates. In Table~\ref{Tab:EgOne1}, we show numerical results for implicit $L^{2}$-projection recovering for $\varepsilon=10^{-6}$, $\theta=0.5$, and $\delta_{K}=4h_{K}$. The effectivity indices (ratio of estimated and exact errors) are close to $1$ after 8 iterations. Moreover, the estimators are robust with respect to $\varepsilon$.

We have checked the cases for $\delta_{K}$ from $\delta_{K}=h_{K}$ to $\delta_{K}=16h_{K}$, and found that the choice of $\delta_{K}$ has a slight influence to the quality of the mesh. This observation indicates that adaptivity and stabilization for convection-diffusion equation is worthy of further study. In fact, the current state-of-the-art in stabilization is not completely satisfactory. In particular, the choice of stabilization parameters is still a subtle issue that is not fully understood. This is reflected either by remaining unphysical oscillations in the numerical solution or by smearing solution features too much. For more discussion on
this subject, we refer to \cite{Cohen}.

%%%%%%%%%%%%%%%%%%%%%%%%%%%%%%%%%%%%%%%%
\begin{table}[t]\small
    \begin{center}
        \caption{Example 1: $k$ -- the number of iterations; $\eta_{k}$ -- the estimated numerical error in $|||\cdot|||_{*}$; $err_{\rm SUPG}$ and eff-index$_1$ -- the exact error in $||\cdot||_{\rm SUPG}$ and the corresponding effectivity index; and $err_{\rm APP}$ and eff-index$_{2}$ -- the exact approximation error in $|||\cdot|||_{\varepsilon}$ and the corresponding effectivity index. Here $\varepsilon=10^{-6}$, $\theta=0.5$, and $\delta_{K}=4h_{K}$.}
        \label{Tab:EgOne1}
        \small %\scriptsize
        \begin{tabular}{|c|c|c|c|c|c|c|c|c|} \hline
            $k$& $1$& $2$& $3$& $4$& $5$& $6$& $7$& $8$\\ \hline
            $\eta_{k}$&1.6476&1.2551&0.9790&0.7580&0.6328&0.5438&0.4908&0.4573\\ \hline
            $err_{\rm SUPG}$&1.8419&1.4871&1.1914&0.9387&0.7717&0.6445&0.5492&0.4765\\ \hline
            eff-index$_{1}$&0.8945&0.8440&0.8217&0.8075&0.8200&0.8437&0.8936&0.9598\\ \hline
            $err_{\rm APP}$&0.8204&0.6811&0.5760&0.4971&0.4519&0.4253&0.4018&0.3852\\ \hline
            eff-index$_{2}$&2.0083&1.8427&1.6998&1.5230&1.4002&1.2786&1.2216&1.1872\\ \hline
        \end{tabular}
        \bigskip
        \caption{Example 2: Numerical results by \eqref{recover4} with $\delta_{K}=16h_{K}$ and $\theta=0.3$. In the table, $\varepsilon$ is the singular perturbation parameter, $\eta_{k}$ is the estimated numerical error, TOL is the given tolerance, DOF is the degrees of freedom, $h_{\rm max}(\varepsilon)$ and $h_{\rm min}(\varepsilon)$ are respectively the largest and smallest mesh sizes, and $k$ is the number of iterations.}
        \label{Tab:EgTwo2}
        \small
        \begin{tabular}{|l|c|c|c|c|c|}
            \hline
            $\varepsilon$ & $10^{-6}$ & $10^{-5}$ & $10^{-4}$ & $10^{-3}$ & $10^{-2}$ \\ \hline
            $\eta_{k}$/TOL & 24.159 & 29.5353 & 51.0934 & 86.1609 & 154.3741 \\ \hline
            DOF & 28194 & 15696 & 1999 & 368 & 75 \\ \hline
            $h_{\rm max}(\varepsilon)$ & 0.55902 & 0.55902 & 0.55902 & 0.55902 & 1.118034 \\ \hline
            $h_{\rm min}(\varepsilon)$ & 7.63e-06 & 3.05e-05 & 4.88e-04 & 3.91e-03 & 0.031250 \\ \hline
            $k$ &  20 & 18 & 14 & 11 & 8 \\ \hline
        \end{tabular}
    \end{center}
\end{table}
%%%%%%%%%%%%%%%%%%%%%%%%%%%%%%%%%%%%%%%%

%%%%%%%%%%%%%%%%%%%%%%%%%%%%%%%%%%%%%%%%
%%%%%%%%%%%%%%%%%%%%%%%%%%%%%%%%%%%%%%%%
\subsection{Example 2: interior and boundary layer}

This model problem is one of the examples solved by Verf\"{u}rth in
ALF software. Let $\Omega=(-1,1)^2$. We set the velocity
field ${\bf a}=(2,1)$, the reaction coefficient $b=0$, and the
source term $f=0$ in (\ref{PDE1}), and consider cases for
$\varepsilon$ from $10^{-3}$ to $10^{-15}$. The following Dirichlet
boundary conditions are applied: $u(x,y)=0$ along $x=-1$ and $y=1$,
and $u(x,y)=100$ along $x=1$ and $y=-1$. The exact solution of this
problem is not available, which however exhibits an exponential
boundary layer along the boundary $\{ (x,y) \; : \: x=1, y>0 \}$,
and a parabolic interior layer along the line segment connecting points
$(-1,-1)$ and $(1,0)$. Note that the interior layer extends in the
direction of the convection coefficient.

\begin{figure}[t]
    \centering
    \begin{minipage}[t]{0.92\linewidth}
        \includegraphics[width=0.92\linewidth]{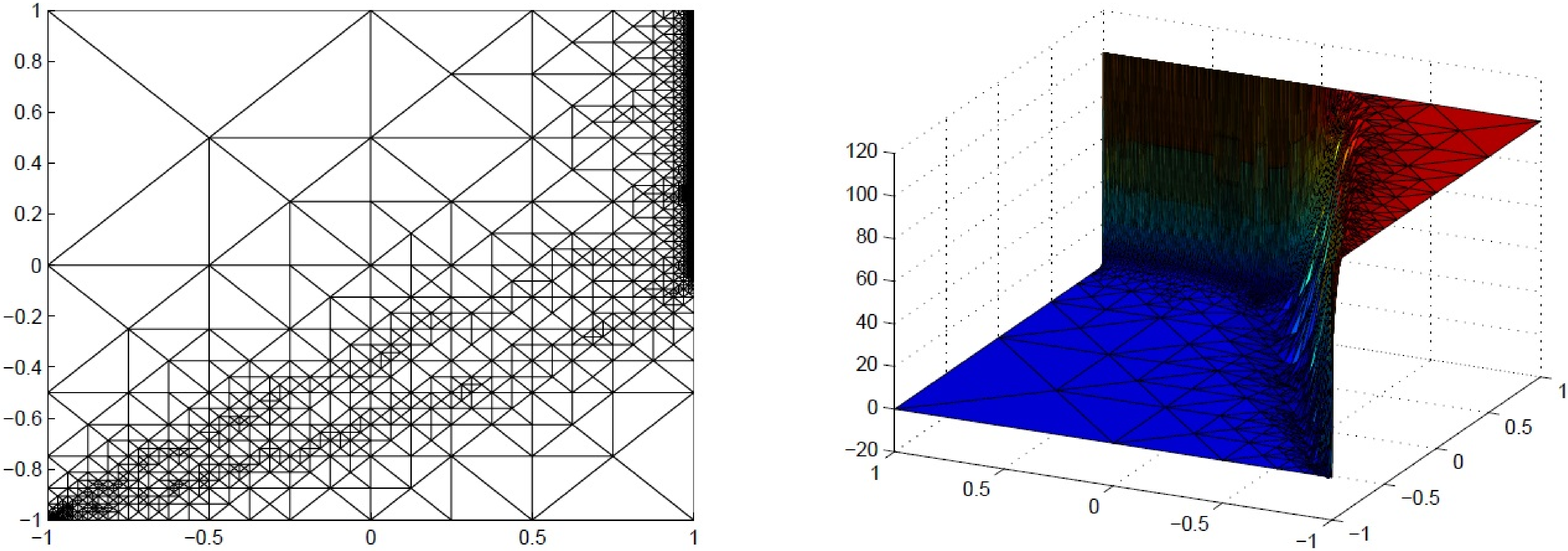}
    \end{minipage}
    \caption{An adaptive mesh with 14315 triangles (left) and the approximation of displacement (piecewise linear element) on the corresponding mesh (right) for $\varepsilon=10^{-11}$ by using the estimator from \eqref{recover4}.}
    \label{Fig.Eg2.1}

    \begin{minipage}[t]{0.92\linewidth}
        \includegraphics[width=0.92\linewidth]{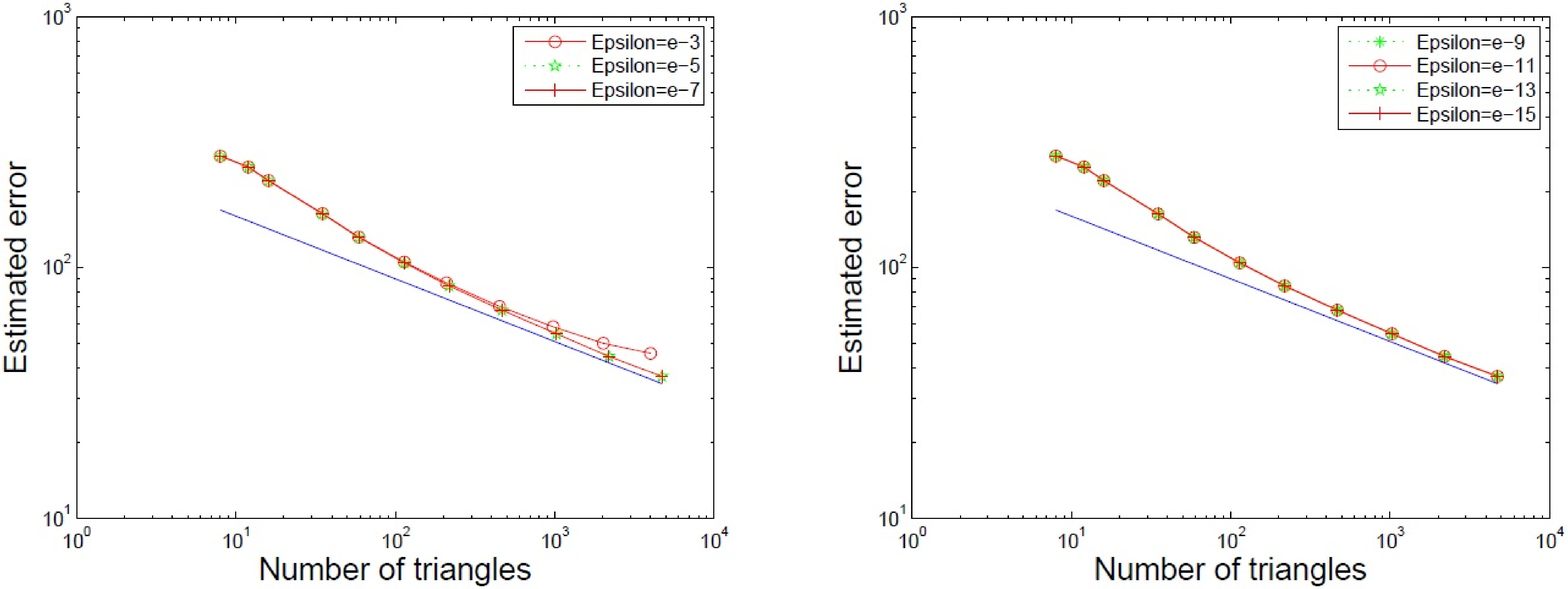}
    \end{minipage}
    \caption{Estimated error of the flux against the number of elements in adaptively refined
    meshes for $\varepsilon$ from $10^{-3}$ to $10^{-7}$ (left) and from $10^{-9}$ to $10^{-15}$ (right) by
    using the estimator from \eqref{recover4}.}
    \label{Fig.Eg2.3}
\end{figure}

\begin{figure}[t]
    \centering
    \begin{minipage}[t]{0.92\linewidth}
        \includegraphics[width=0.92\linewidth]{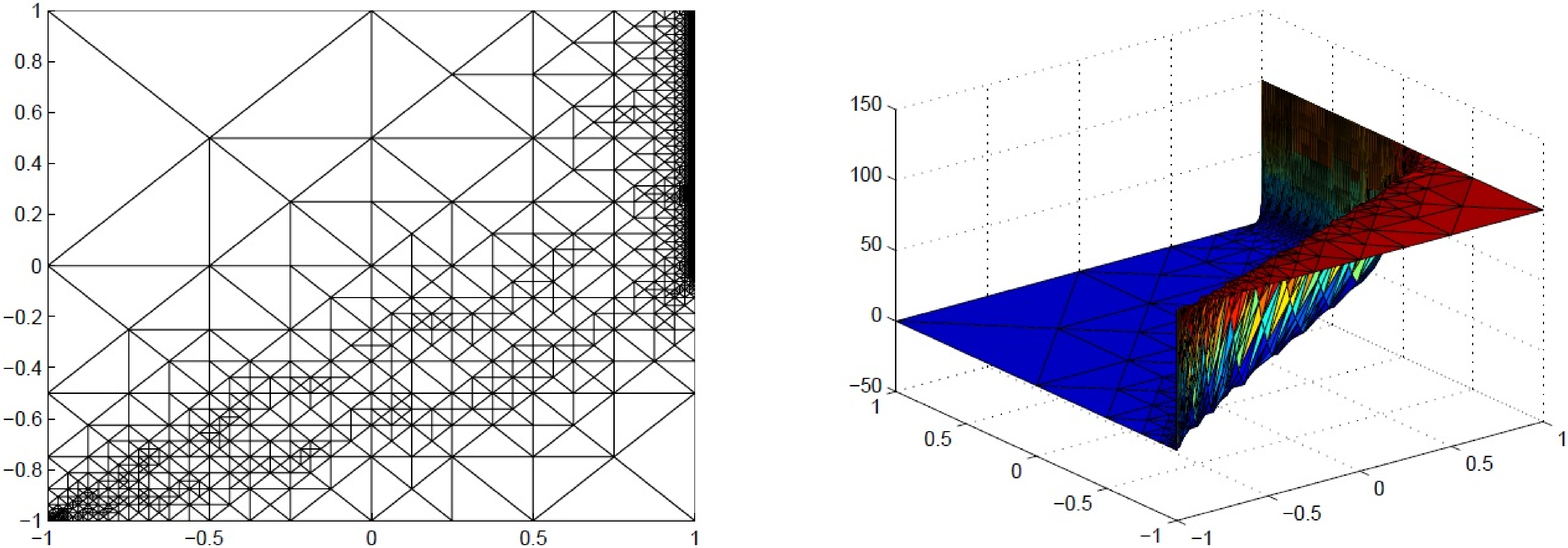}
    \end{minipage}
    \caption{An adaptive mesh with 7761 triangles (left) and the approximation of displacement (piecewise linear element) on the corresponding mesh (right) for $\varepsilon=10^{-11}$ by using the estimator from \eqref{recover2}.}
    \label{Fig.Eg2.4}

    \begin{minipage}[t]{0.92\linewidth}
        \includegraphics[width=0.92\linewidth]{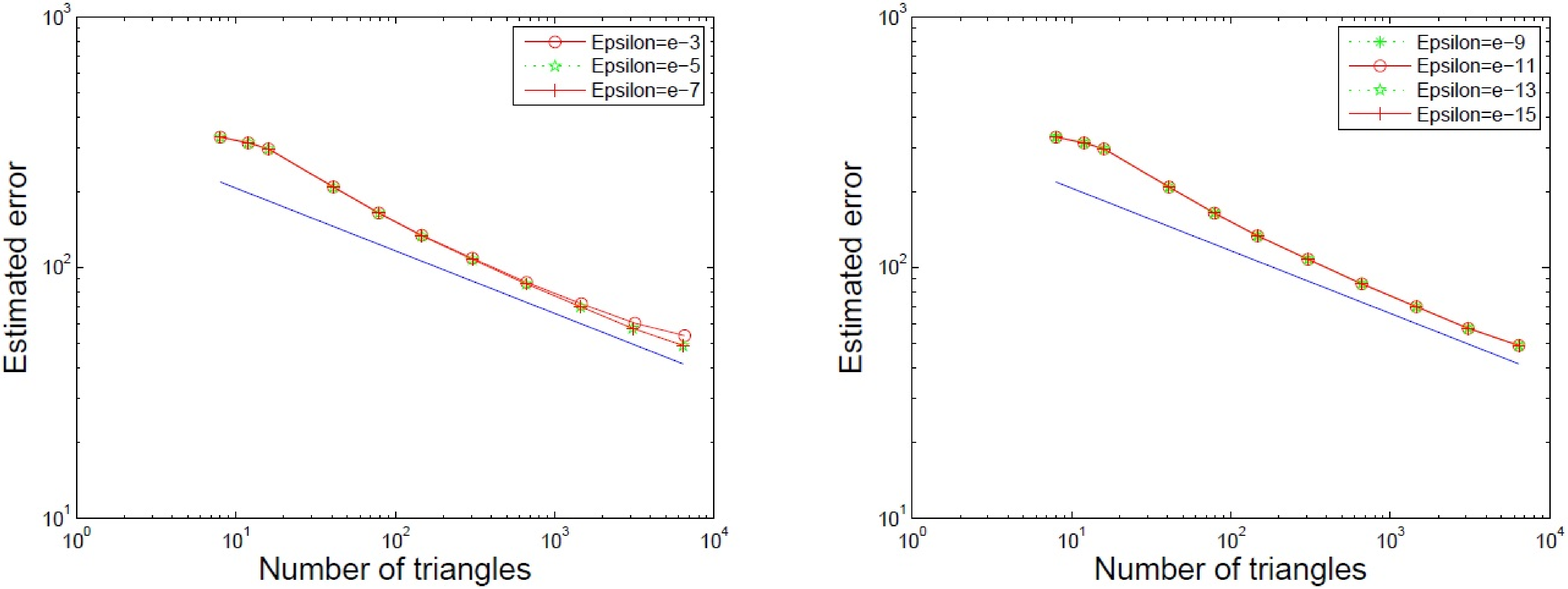}
    \end{minipage}
    \caption{Estimated error of the flux against the number of elements in adaptively refined
    meshes for $\varepsilon$ from $10^{-3}$ to $10^{-7}$ (left) and from $10^{-9}$ to $10^{-15}$ (right) by
    using the estimator from \eqref{recover2}.}
    \label{Fig.Eg2.5}
\end{figure}

\begin{figure}[htbp]
    \centering
    \begin{minipage}[t]{0.92\linewidth}
        \includegraphics[width=0.92\linewidth]{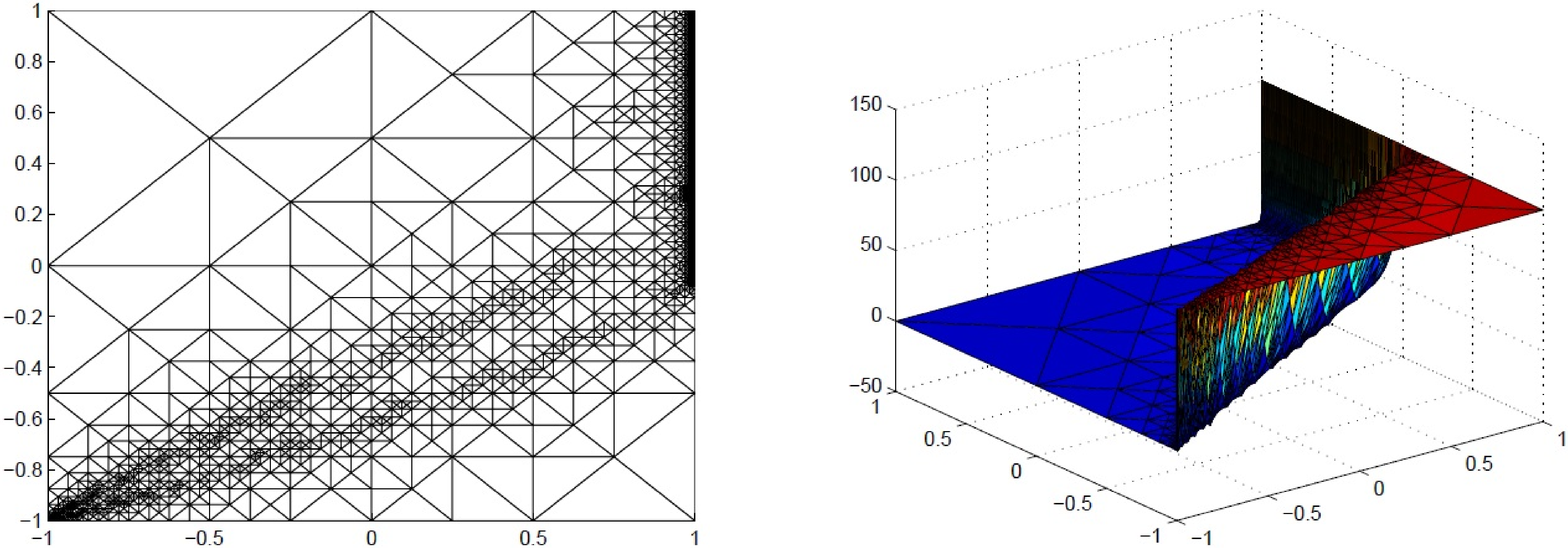}
    \end{minipage}
    \caption{An adaptive mesh with 27309 triangles (left) and the approximation of displacement (piecewise linear element) on the corresponding mesh (right) for $\varepsilon=10^{-11}$ by using the estimator from (\ref{recover21}).}
    \label{Fig.Eg2.6}

    \begin{minipage}[t]{0.92\linewidth}
        \includegraphics[width=0.92\linewidth]{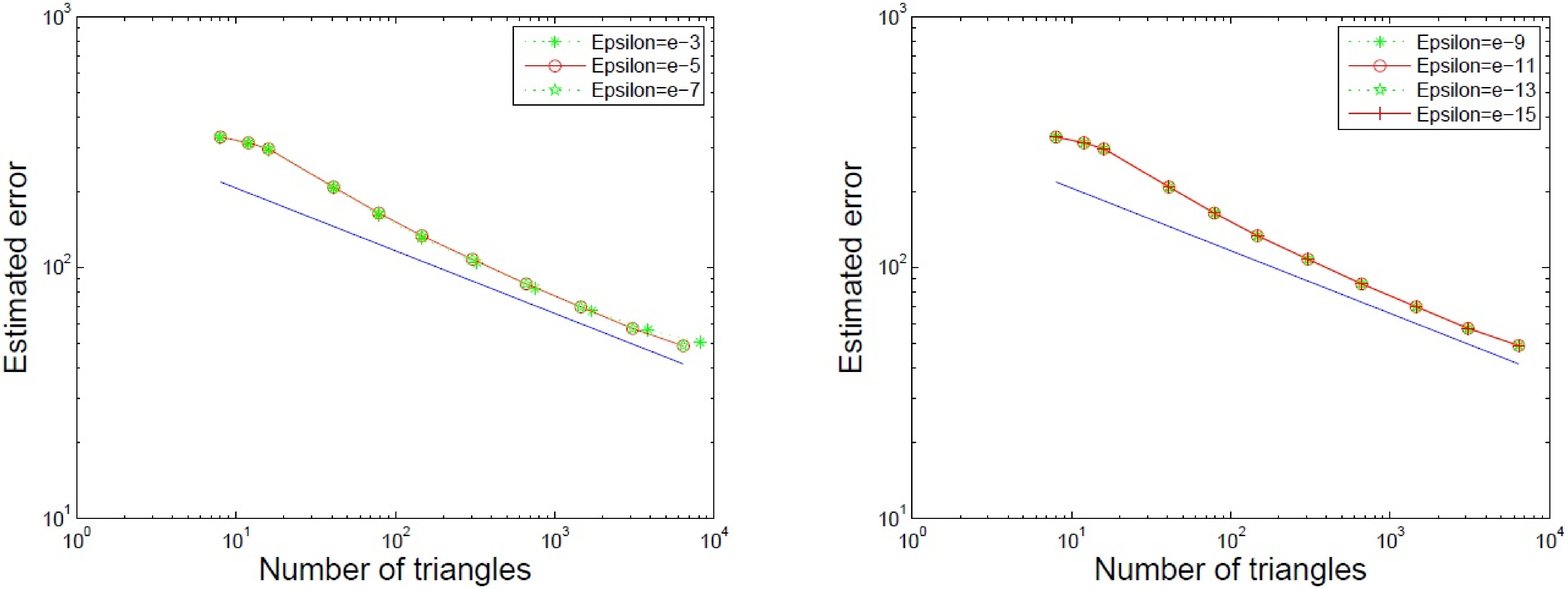}
    \end{minipage}
    \caption{Estimated error of the flux against the number of elements in adaptively refined meshes
    for $\varepsilon$ from $10^{-3}$ to $10^{-7}$ (left) and from $10^{-9}$ to $10^{-15}$ (right) by
    using the estimator from (\ref{recover21}).}
    \label{Fig.Eg2.7}
\end{figure}

%\begin{table}[h]\small
%       \begin{center}
%        \caption{Example 2: TOL -- the given tolerance, $k$ -- the number of iterations, $\eta_{k}$ -- the estimated numerical error, DOF-- the degrees of freedom, and $h_{\rm min}(\varepsilon)$ -- the smallest mesh size. Here $\varepsilon=10^{-6}$, $\theta=0.3$, and $\delta_{K}=16h_{K}$. RT-explicit recovery \eqref{recover4} is used.}
%        \label{Tab:EgTwo1}
%        \small %\scriptsize
%        \begin{tabular}{|c|c|c|c|c|c|} \hline
%            ${\rm TOL}$& $331.67$& $165.85$& $82.925$& $41.462$& $24.500$ \\ \hline
%            $k$& $1$& $7$& $12$& $16$& $20$ \\ \hline
%            $\eta_{k}$&331.67&156.66&70.674&37.981&24.159 \\ \hline
%            ${\rm DOF}$&9&63&460&2756&28194 \\ \hline
%            $h_{\rm min}(\varepsilon)$&1&0.06250&1.953e-03&1.2207e-04&7.6294e-06 \\ \hline
%        \end{tabular}
%       \end{center}
%\end{table}

We choose the same initial mesh as in Example 1. From Figures \ref{Fig.Eg2.1}, \ref{Fig.Eg2.4},
and \ref{Fig.Eg2.6}, which are respectively depicted by using the estimators obtained from the
explicit recovery \eqref{recover4}, the $L^{2}$-projection recovery \eqref{recover2}, and the $H({\rm div})$
recovery (\ref{recover21}), and by choosing the stabilization parameters as $\delta_{K}=h_{K}$. It is
observed that the meshes are refined in both the exponential and the parabolic layer regions,
but the refinement first occurs in the region near $\{ (x,y) \; : \: x=1, y>0 \}$. The reason is
that the exponential layer is much stronger than the parabolic layer. It is also observed that each
one of three estimators capture the behavior of the solution pretty well, even when the singular
perturbation parameter $\varepsilon$ is very small.

Figures~\ref{Fig.Eg2.3}, \ref{Fig.Eg2.5}, and \ref{Fig.Eg2.7} are
depicted by using the estimators obtained from the flux recovery
\eqref{recover4}, \eqref{recover2}, and (\ref{recover21}), respectively, and by choosing the
stabilization parameters as $\delta_{K}=16h_{K}$. The estimated
error against the number of elements in adaptively refined mesh for
$\varepsilon$ from $10^{-3}$ to $10^{-15}$ are reported. It is
observed that all three estimated errors from respective estimators
in norm $|||\cdot|||_{*}$ reduce uniformly in sufficiently small
$\varepsilon$ in absence of reaction term. In addition, the same
convergence rates as in Example 1 are obtained. It is also noticed
that the performance of the three estimators are similar.

In Table~\ref{Tab:EgTwo2}, data for different $\varepsilon$s are provided. The adaptive iterations refine elements till the layer is resolved or the TOL is met. One may observe that the performance of the error estimators depends on the TOL; the minimum mesh sizes $h_{min}$ are of order $O(\varepsilon h_{max})$ or $O(\varepsilon)$, since the maximum mesh sizes $h_{max}(\varepsilon)$ and the initial mesh size $h_0$ are of similar sizes; the DOF required for resolving layers will increase when TOL and/or $\varepsilon$ decrease; and the proposed error estimators are robust with respect to $\varepsilon$. On the other hand, due to the current state-of-the-art in stabilization, spurious oscillations may occur on very fine mesh, which will hence affect the quality of mesh refinement of further iterations and the rate of convergence of the method; cf. \cite{Cohen} and the plots for $\varepsilon = 10^{-2}$ in Figures~\ref{Fig.Eg1.3} and \ref{Fig.Eg1.5}.

%%%%%%%%%%%%%%%%%%%%%%%%%%%%%%%%%%%%%%%%%%%%%%%%%%%%%%%%%%%
%%%%%%%%%%%%%%%%%%%%%%%%%%%%%%%%%%%%%%%%%%%%%%%%%%%%%%%%%%%
%\section*{Acknowledgments}
%The authors acknowledge the valuable inputs of the anonymous referees, whose comments helped to improve the quality of the paper by a large amount.

%%%%%%%%%%%%%%%%%%%%%%%%%%%%%%%%%%%%%%%%%%%%%%%%%%%%%%%%%%%
%%%%%%%%%%%%%%%%%%%%%%%%%%%%%%%%%%%%%%%%%%%%%%%%%%%%%%%%%%%


\begin{thebibliography}{99}

\bibitem{ADAMS}
R.A. Adams, {\it Sobolev space}, Academic Press, New York. 1975.

\bibitem{Ani}
M. Ainsworth, A. Allends, G. R. Barrenechea, and R. Rankin, {\it Fully computable a posteriori error bounds for stabilized FEM approximations of convection-reaction-diffusion problems in three dimentions}, Int. J. Numer. Meth. Fluids, 73 (2013), no. 9, 765--790.

\bibitem{Ainsworth}
M. Ainsworth and A.W. Craig, {\it {\em A posteriori} error estimation in finite element method}, Numer. Math., 36 (1991), 429--463.

\bibitem{Ainsworth0}
M. Ainsworth and J.T. Oden, {\it {\em A posteriori} error estimation in finite element analysis}, Pure Appl. Math., Wiley-Interscience, John Wiley $\&$ Sons, New York, 2000.

\bibitem{AUGUSTIN}
M. Augustin, A. Caiazzo, A. Fiebach, J. Fuhrmann, V. Jhon, A. Linke, and R. Umla, {\it An assessment of discretizations for convection-dominated convection-diffusion equations}, Comput. Methods Appl. Mech. Engrg., 200 (2011), 3395-3409.

\bibitem{Bank}
R. Bank and J. Xu, {\it Asymptotically exact {\em a posteriori} error estimators, Part 1: Grids with superconvergence}, SIAM J. Numer. Anal., 41 (2003), 2294--2312; {\it Part 2: General unstructured grids}, SIAM J. Numer. Anal., 41 (2003), 2313--2332.

\bibitem{Bank2}
R. Bank, J. Xu, and B. Zheng, {\it Superconvergent derivative recovery for Lagrange triangular elmements of degree $p$ on unstructured grids}, SIAM J. Numer. Anal., 45 (2007), 2032--2046.

\bibitem{Berrone02}
S. Berron, {\it Robustness in {\em a posteriori} error analysis for FEM flow models}, Numer. Math. 91 (2002), no. 3, 389--422.

\bibitem{cai1}
Z. Cai and S. Zhang, {\it Recovery-based error estimator for interface problems: conforming linear elemnts}, SIAM J. Numer. Anal., 47 (2009), no. 3, 2132--2156.

\bibitem{cai2}
Z. Cai and S. Zhang, {\it Flux recovery and {\em a posteriori} error estimators: conforming elements for scalar elliptic equations}, SIAM J. Numer. Anal., 48 (2010), no. 2, 578--602.

\bibitem{Carstensen}
C. Carstensen, {\it All first-order averaging technique for {\em a posteriori} finite element error control on unstructure grids are efficient and reliable}, Math. Comp., 73 (2003), 1153--1165.

\bibitem{Ciarlet}
P.G. Ciarlet, {\it The finite element method for elliptic problems}, Nort-Holland, Amsterdam, 1978.

\bibitem{Clement}
P. Clem\'{e}nt, {\it Approximation by finite element functions using local regularization}, RAIRO S\'{e}r. Rouge Anal. Num\'{e}r., 2 (1975), 77--84.

\bibitem{Cohen}
A. Cohen, W. Dahmen, and G. Welper, {\it Adaptivity and variational stablization for convection-diffusion equations}, ESAIM: Mathematical Modelling and Numerical Analysis, 46 (2012), no. 5, 1247--1273

\bibitem{Du}
S.H. Du and Z. Zhang, {\it A robust residual-type {\em a posteriori} error estimator for convection-diffusion equations}, Journal of Scientific Computing, 65 (2015), pp. 138-170.

\bibitem{Franca}
L.P. Franca, S.L. Frey, and T.J.R. Hughes, {\it Stabilized finite element methods I: Application to the advective-diffusive model}, Comput. Methods Appl. Mech. Engrg., 95 (1992), 253--276.

\bibitem{Hughes}
T.J.R. Hughes and A. Brooks, {\it Streamline upwind/Petrov Garlerkin formulations for the convection dominated flows with particular emphasis on the incompressible Navier-Stokes equations}, Comput. Methods Appl. Mech. Engrg., 54 (1982), 199--259.

\bibitem{John-Novo}
V. John and J. Novo, {\it A robust SUPG norm {\em a posteriori} error estimator for stationary convection-diffusion equations}, Comput. Meth. Appl. Mech. Engrg., 255 (2013), 289--305.

\bibitem{Kunert}
G. Kunert, {\it {\em A posteriori} error estimation for convection dominated problems on anisotropic meshes}, Math. Methods Appl. Sci. 26 (2003), no. 7, 589--617.

\bibitem{Ovall1}
J.S. Ovall, {\it Fixing a ``bug'' in recovery-type {\em a posteriori} error estimators}, Technical report 25, Max-Planck-Institute fur Mathematick in den Naturwissenschaften, Bonn, Germany, 2006.

\bibitem{Ovall2}
J.S. Ovall, {\it Two dangers to avoid when using gradient recovery methods for finite element error estimation and adaptivity}, Technical report 6, Max-Planck-Institute fur Mathematick in den Naturwissenschaften, Bonn, Germany, 2006.

\bibitem{Rapin}
G. Rapin and G. Lube, {\it A stabilized scheme for the Lagrange multiplier method for advection-diffusion equations}, Math. Models Methods Appl. Sci. 14 (2004), no. 7, 1035--1060.

\bibitem{Roos}
H.G. Roos, M. Stynes, and L. Tobiska, {\it Robust numerical methods for singularly perturbed differential equations}, Springer-Verlag Berlin Heidelberg, 2008.

\bibitem{San01}
G. Sangalli, {\it A robust {\em a posteriori} estimator for the residual free bubbles method applied to advection-dominated problems}, Numer. Math., 89 (2001), 379--399.

\bibitem{San08}
G. Sangalli, {\it Robust {\em a-posteriori} estimator for advection-diffusion-reaction problems}, Math. Comp., 77 (2008), no. 261, 41--70.

\bibitem{Tobiska}
L. Tobiska and R. Verf\"{u}rth, {\it Robust {\em a-posteriori} error estimates for stablized finite element methods}, IMA J Numer. Anal., (2015), doi: 10.1093$/$imanum$/$dru060.

\bibitem{Verfurth2}
R. Verf\"{u}rth, {\it {\em A posteriori} error estimators for convection-diffusion equations}, Numer. Math., 80 (1998), 641--663.

\bibitem{Verfurth1}
R. Verf\"{u}rth, {\it Robust {\em a posteriori} error estimates for stationary convection-diffusion equations}, SIAM J.Numer. Anal., 43 (2005), 1766--1782.

\bibitem{Yotov06}
M. F. Wheeler and I. Yotov, {\it A multipoint flux mixed finite element method}, SIAM J. Numer. Anal., 44 (2006), 2082--2106.

\bibitem{Zhang1}
Z. Zhang, {\it {\em A posteriori} error estimates on irregular grids based on gradient recovery}, Adv. Comput. Math., 15 (2001), 363--374.

\bibitem{Zhang}
Z. Zhang, {\it Recovery techniques in finite element methods}, in {\it Adaptive Computations: Theory and Algorithms}, Mathematics Monogr. Ser. 6, T. Tang
and J. Xu, eds., Science Publisher, New York, 333--412, 2007.

\bibitem{Zienkiewicz}
O.C. Zienkiewicz and J.Z. Zhu, {\it A simple error estimator and adaptive procedure for practical engineering analysis}, Internat. J. Numer. Methods Engrg., 24 (1987), 337--357.

\bibitem{Zienkiewicz2}
O.C. Zienkiewicz and J.Z. Zhu, {\it The superconvergent patch recovery and {\em a posteriori} error estimates, Part 1: The recovery technique}, Internat. J. Numer. Methods Engrg., 33 (1992), 1331--1364; {\it Part 2: Error estimates and adaptivity}, Internat. J. Numer. Methods Engrg., 33 (1992), 1365--1382.

\end{thebibliography}
\end{document}